\documentclass[a4paper]{amsart}
\usepackage{amsthm,amsfonts,amsmath,amssymb}
\usepackage[abs]{overpic}
\usepackage{thm-restate}

\newtheorem{theorem}{Theorem}[section]
\newtheorem{proposition}[theorem]{Proposition}

\newtheorem{claim}[theorem]{Claim}

\newtheorem{example}[theorem]{Example}
\newtheorem{conjecture}[theorem]{Conjecture}
\newtheorem{remark}[theorem]{Remark}

\begin{document}
\title[DELTA-UNKNOTTING NUMBER FOR MONTESINOS KNOTS]{DELTA-UNKNOTTING NUMBER FOR MONTESINOS KNOTS}
\author{Kazumichi Nakamura}
\email{mi-ka-07130703@ezweb.ne.jp}





\begin{abstract}
The $\Delta$-unknotting number for a knot is defined as the minimum number of $\Delta$-moves needed to deform the knot into the trivial knot.
In this paper, we determine the $\Delta$-unknotting numbers for certain families of Montesinos knots. 
Using results on pretzel knots and two-bridge knots, we prove that for these families the $\Delta$-unknotting number equals the absolute value of the second coefficient of the Conway polynomial. 
In particular, every positive pretzel knot belongs to these families, and certain two-bridge knots also belong to them.
\end{abstract}

\maketitle

\noindent\textbf{keywords:}$\Delta$-move, $\Delta$-unknotting number, $\Delta$-Gordian distance, Conway polynomial, Conway's normal form, two-bridge knot, pretzel knot, positive pretzel knot, Montesinos knot, linking number.

\section{Introduction}
In this paper, we study the $\Delta$-unknotting numbers for Montesinos knots.

In \cite{MaN}, H. Murakami and Y. Nakanishi introduced a local move on regular diagrams of oriented knots and links, called a $\Delta$-move (or $\Delta$-unknotting operation), as illustrated in Figure \ref{fig:delta}.
\begin{figure}[htbp]
 \centering    \includegraphics[width=0.8\linewidth]
    {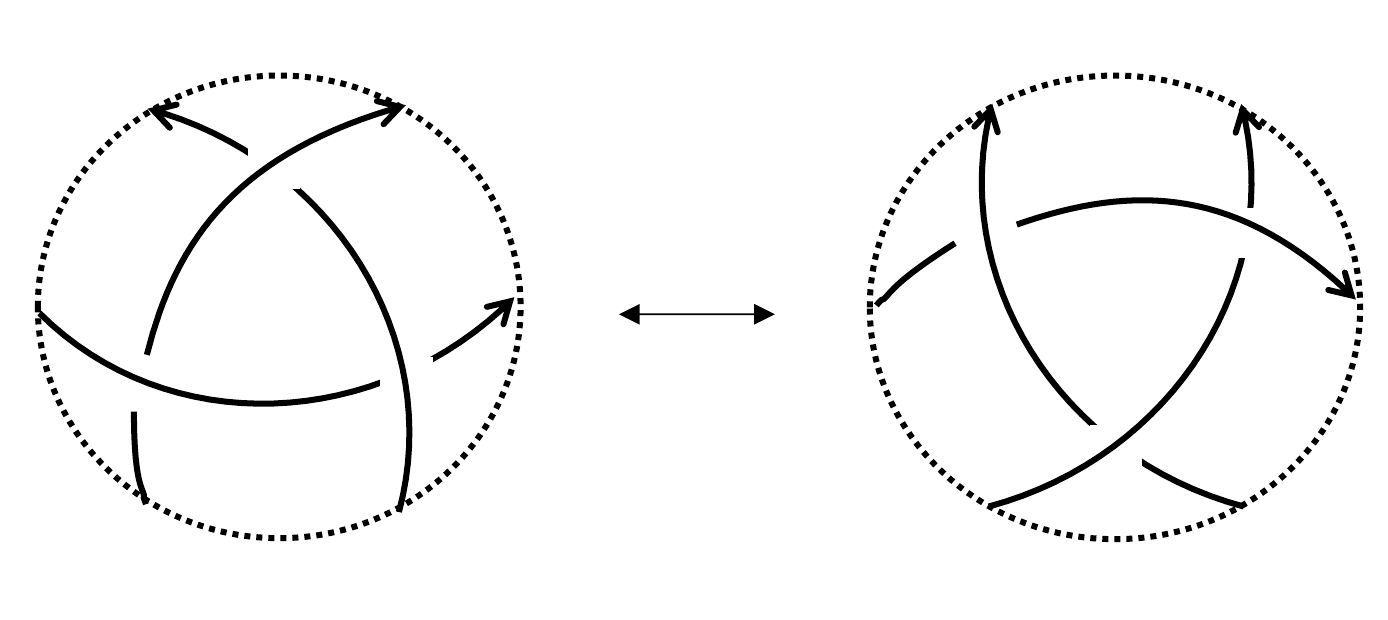}
 \caption{A $\Delta$-move.}
 \label{fig:delta}
\end{figure}
They proved that two knots can be deformed into each other by a finite sequence of $\Delta$-moves. 
The $\Delta$-Gordian distance $d_G^{\Delta}(K, K^{'})$ of two oriented knots $K$ and $K^{'}$ is defined as the minimum number of $\Delta$-moves needed to deform a diagram of $K$ into that of $K{'}$. 
The $\Delta$-unknotting number $u^{\Delta}(K)$  of an oriented knot $K$ is defined as the $\Delta$-Gordian distance $d_G^{\Delta}(K, O)$ of $K$ and the trivial knot $O$.
In \cite{Oka1}, M. Okada proved that $u^{\Delta}(K) \geq |a_2(K)|$, where $a_2(K)$ denotes the second coefficient of the Conway polynomial of $K$. 

In \cite{NaNaU}, joint work with Y. Nakanishi and Y. Uchida, we determined the $\Delta$-unknotting numbers for certain classes of oriented knots: torus knots, positive pretzel knots, and positive 3-braids.
For these classes, the $\Delta$-unknotting number coincides with the second coefficient of the Conway polynomial. 
In particular, the $\Delta$-unknotting number for $T(p,q)$ (the torus knot of type $(p,q)$) is equal to $(p^2-1)(q^2-1)/24$.
In \cite{Naka4}, we computed the $\Delta$-unknotting numbers for positive pretzel knots. 

Furthermore, in \cite{Naka3}, we determined the $\Delta$-unknotting numbers for two-bridge knots of type 
$C(2m_1, 2m_2, ... ,  2m_n)$ and type $C(2m_1, 2m_2, ... , 2m_{n-1}, 2m_n-1)$, where $m_i$ is a positive integer for $1 \leq i \leq n$.
In particular, these numbers coincide with the absolute values of the second coefficients of their Conway polynomials.

\vspace{1em}
In this paper, we study certain families of Montesinos knots and determine their $\Delta$-unknotting numbers. Using results from \cite{Naka4, Naka3}, we show that for these families $u^{\Delta}(K) = |a_2(K)|$ (see Theorem~\ref{thm:main}). In particular, every positive pretzel knot belongs to these families, and every two-bridge knot of type $C(2m_1, 2m_2, ... ,  2m_n)$ or $C(2m_1, 2m_2, ... , 2m_{n-1}, 2m_n-1)$ in
\cite{Naka3} also belongs to them.

We now state our main theorem. 
The definitions of Type~A, B, C, D, E, and F rational tangles 
will be given in Section~2, 
and the notation $V^{(k)}, W^{(k)}, X^{(k)}, Y^{(k)}$ 
will be introduced in Section~3.

\begin{restatable}{theorem}{main}\label{thm:main}
Let
$K = M\big(0; (\alpha_1,\beta_1), \dots, (\alpha_r,\beta_r)\big)$
be a Montesinos knot with $r (\geq 2)$ rational tangles $T(\beta_k/\alpha_k)$ for $1 \leq k \leq r$.
Suppose that one of the following holds:

\begin{enumerate}
\item All tangles $T(\beta_k/\alpha_k)$ are of Type~D, and $r$ is odd.

\item $T(\beta_1/\alpha_1)$ is of Type~E, 
      $T(\beta_k/\alpha_k)$ is of Type~C
      for $2 \le k \le r$, 
      and $r$ is even.

\item $T(\beta_1/\alpha_1)$ is of Type~F, 
      $T(\beta_k/\alpha_k)$ is of Type~C
      for $2 \le k \le r$, 
      and $r$ is odd.

\item $T(\beta_1/\alpha_1)$ is of Type~E, 
      $T(\beta_k/\alpha_k)$ is of Type~A
      for $2 \le k \le r$.

\item $T(\beta_1/\alpha_1)$ is of Type~D, 
      $T(\beta_k/\alpha_k)$ is of Type~A
      for $2 \le k \le r$.
      
\item $T(\beta_1/\alpha_1)$ is of Type~F, 
      $T(\beta_k/\alpha_k)$ is of Type~B
      for $2 \le k \le r$.      
\end{enumerate}

Then, we have $u^{\Delta}(K) = |a_2(K)|.$
\begin{enumerate}
    \item If $K$ is of pattern (1), then
    \begin{align*}
    u^{\Delta}(K) 
= -\sum_{k=1}^{r} W^{(k)} 
+ \sum_{1 \leq i < j}^{r} X^{(i)} X^{(j)} + \frac{1}{8} ( r-1 ) 
\quad \big(= a_2(K)\big).
    \end{align*}
    \item If $K$ is of pattern (2), then
    \begin{align*}
    u^{\Delta}(K)   
& = \sum_{k=1}^{r} W^{(k)} 
+ \frac{1}{2} \sum_{k=1}^{r} {X^{(k)}}^2
+ X^{(1)} \sum_{k=2}^{r} X^{(k)} 
- \frac{1}{8} (r-1) \\
& \big(= a_2(K)\big).
    \end{align*}
    \item If $K$ is of pattern (3), then
    \begin{align*}
    u^{\Delta}(K)   
& = -W^{(1)} + \sum_{k=2}^{r} W^{(k)} 
+ \frac{1}{2} \sum_{k=2}^{r} {X^{(k)}}^2
- X^{(1)} \sum_{k=2}^{r} X^{(k)} 
- \frac{1}{8} (r-1) \\ 
& \big(= a_2(K)\big).
    \end{align*}
    \item If $K$ is of pattern (4), then
    \begin{align*}
    u^{\Delta}(K)  
= W^{(1)} + \sum_{k=2}^{r} V^{(k)} 
+ X^{(1)} \sum_{k=2}^{r} Y^{(k)} 
\quad \big(= - a_2(K)\big).
    \end{align*}
    \item If $K$ is of pattern (5), then
    \begin{align*}
    u^{\Delta}(K)  
= - W^{(1)} + \sum_{k=2}^{r} V^{(k)} 
+ \frac{1}{2} {X^{(1)}}^2
+ X^{(1)} \sum_{k=2}^{r}Y^{(k)}
\quad \big(= a_2(K)\big).
    \end{align*}    
    \item If $K$ is of pattern (6), then
    \begin{align*}
    u^{\Delta}(K)
= - W^{(1)} + \sum_{k=2}^{r} V^{(k)}
+ \frac{1}{2} \sum_{k=2}^{r} {{Y^{(k)}}^2}
+ X^{(1)} \sum_{k=2}^{r} Y^{(k)}
\quad \big(= a_2(K)\big).
    \end{align*} 
\end{enumerate}
\end{restatable}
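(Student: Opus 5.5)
The proof combines Okada's lower bound $u^{\Delta}(K)\ge |a_2(K)|$ from \cite{Oka1} with a matching upper bound, obtained by exhibiting an explicit sequence of exactly $|a_2(K)|$ $\Delta$-moves that unknots $K$. It therefore has two independent parts: computing $a_2(K)$ in terms of the invariants $V^{(k)},W^{(k)},X^{(k)},Y^{(k)}$ of the tangles, and building the unknotting sequence.

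For the computation of $a_2(K)$, I would express each rational tangle of the given Type through its Conway normal form, i.e.\ a continued fraction with even entries, or even entries except the last as in \cite{Naka3}. I would then apply the skein relation $\nabla_{L_+}-\nabla_{L_-}=z\nabla_{L_0}$ to one crossing of a twist region at a time. The Types A--F are presumably defined so that every tangle has a prescribed connectivity of its endpoints and a prescribed orientation pattern. This means each skein step either reduces a twist length by $2$ within the same Type, or smooths the tangle to a $0$- or $\infty$-tangle, producing a two-component link whose linking number is a sum of the $X^{(k)}$ or $Y^{(k)}$.

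Iterating these skein steps gives $a_2(K)$ as a sum of three kinds of terms. The first are the individual tangle contributions $W^{(k)}$ and $V^{(k)}$, which coincide with the second Conway coefficients of the two-bridge knots or pretzel pieces that the tangles close up to. The second are pairwise linking-number cross terms, of the form $X^{(i)}X^{(j)}$ or $X^{(1)}Y^{(k)}$, together with squares $\tfrac12{X^{(k)}}^2$. The third is a parity constant $\pm\tfrac18(r-1)$, which comes from the half-integer normalisation of the $X^{(k)}$. I would organise this as an induction on $r$: adding one tangle changes $a_2$ by that tangle's own contribution plus its linking with the previously accumulated ones. The parity hypotheses on $r$ in patterns (1)--(3) ensure that the sum is a knot rather than a link, so that this induction applies. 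Finally, in each pattern I would check the sign of $a_2$, which is the reason pattern (4) gives $-a_2$.

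For the upper bound, I would run the same induction on $r$ and on the twist lengths, but now with $\Delta$-moves in place of skein steps. The key local fact, used in \cite{Naka4, Naka3}, is that a clasp or full twist between two parallel strands carrying linking number $\ell$ can be changed by $\Delta$-moves at a cost equal to the change in $a_2$. I would therefore first apply the known results for the individual pieces, namely positive pretzel knots in \cite{Naka4} and two-bridge knots of types $C(2m_1,\dots,2m_n)$ and $C(2m_1,\dots,2m_n-1)$ in \cite{Naka3}. These untwist each tangle $T(\beta_k/\alpha_k)$ to an elementary tangle using exactly $W^{(k)}$ or $V^{(k)}$ moves. Since $\Delta$-moves are local, this can be done inside each tangle ball independently.

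The result of the previous step is a pretzel-like knot whose remaining twisting is recorded by the $X^{(k)}$ and $Y^{(k)}$. I would unknot it with $\Delta$-moves that pass one band through the others, at cost $|X^{(i)}X^{(j)}|$ or $|X^{(1)}Y^{(k)}|$ per pair, plus the square and parity terms. The count must match the formula term by term, which works only if every term has the same sign, so that no cancellation is wasted. I expect this step to be the main obstacle: it requires checking, pattern by pattern, that the signs forced by Types C--F make all contributions to $a_2$ of one sign. Once that is verified, the move count equals $|a_2(K)|$ and the theorem follows.
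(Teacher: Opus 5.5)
There is a genuine gap in your upper bound, at the step you yourself flag as the main obstacle. Your skein-relation computation of $a_2(K)$ along twist regions is close to the paper's.

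After reducing the tangles, you plan to unknot the remaining knot by band passes costing $|X^{(i)}X^{(j)}|$ or $|X^{(1)}Y^{(k)}|$ per pair, plus square and parity terms. This cannot be a literal move count. For Type C and D tangles $c_1$ is odd, so $2X^{(k)}$ is odd, and $X^{(i)}X^{(j)}$ and $\frac12{X^{(k)}}^2$ are in general not integers. For example, for $K=P(3,3,3)$ each pair would ``cost'' $\frac94$. Only the total, including the $\pm\frac18(r-1)$ term, is integral.

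The missing observation is that no new unknotting sequence is needed, because the reduced knot $K'$ is already understood:
\begin{itemize}
\item In patterns (1), (2), (3), $K'=P(2X^{(1)},\dots,2X^{(r)})$ is a positive pretzel knot of odd type, even type A, or even type B respectively.
\item In patterns (4), (5), $K'=C(2X^{(1)},2\sum_{k\ge2}Y^{(k)})$ is a two-bridge knot.
\end{itemize}
In each case $u^{\Delta}(K')=|a_2(K')|$ is known from Propositions \ref{prop:preodd}, \ref{prop:preeven}, \ref{prop:even}, \ref{prop:oddnew}. The paper then argues as follows:
\begin{itemize}
\item It uses $u^{\Delta}(K)\le d_G^{\Delta}(K,K')+u^{\Delta}(K')$.
\item It bounds $d_G^{\Delta}(K,K')$ by the tangle reductions of Claims \ref{claim:ABCDEF} and \ref{claim:technique2}.
\item Along the same crossing changes, it shows that $a_2(K)-a_2(K')$ equals that bound, with the same sign as $a_2(K')$.
\end{itemize}
You do cite \cite{Naka4, Naka3}, but for the wrong purpose. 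Those results unknot closed knots; the tangle reductions instead come from the crossing-change trick of Claim \ref{claim:technique}.

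Your reduction step also fails in pattern (6). A Type B tangle has $c_1$ odd and $n$ even, so $V^{(k)}$ need not be an integer: $c^{(k)}_1=1$, $c^{(k)}_2=2$ gives $V^{(k)}=\frac12$. Such a tangle cannot be untwisted by exactly $V^{(k)}$ moves, which is why Claim \ref{claim:ABCDEF} leaves Type B tangles unchanged. The paper instead:
\begin{itemize}
\item reduces only the first tangle, to the single twist $C(2X^{(1)})$;
\item removes that twist at a cost of $X^{(1)}\sum_{k\ge2}Y^{(k)}$;
\item lands on the connected sum $L(T(\beta_2/\alpha_2))\#\cdots\#L(T(\beta_r/\alpha_r))$, each summand handled by Proposition \ref{prop:oddnew}(1).
\end{itemize}

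Finally, your induction on $r$ for $a_2$ does not stay among knots. For example, $P(p_1,p_2)$ with both $p_i$ odd is a two-component link. So Proposition \ref{prop:lk} and the knot invariant $a_2$ are not available at intermediate stages. The paper avoids this by keeping $r$ fixed and only shortening even twist regions inside the tangles, so that $k_\pm$ remain knots throughout.
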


In Section \ref{sec5}, we consider Montesinos knots with the same minimal crossing number that admit minimal crossing diagrams sharing the same shadow, where this shadow realizes the minimal crossing number.

\vspace{1em}
\section{Preliminaries}\label{sec2}

\subsection{Two-Bridge Knots and Their $\Delta$-Unknotting Numbers} 
\label{subsec21}

\leavevmode

A knot $K$ is said to be a two-bridge knot if $K$ has a diagram as in Figure \ref{fig:twobridge}, called Conway's normal form. For a knot diagram as in Figure \ref{fig:twobridge}, each $|c_i|$ presents the number of half-twists for integers $c_1, c_2, ... , c_n$. In this paper, for the sign of $\alpha_i$, we assume that a right-handed half-twist is positive when $i$ is odd, and a left-handed half-twist is positive when $i$ is even. We denote this knot diagram by $C(c_1, c_2, ... , c_n)$.
\begin{figure}[htbp]
 \centering
 \includegraphics[width=1\linewidth]{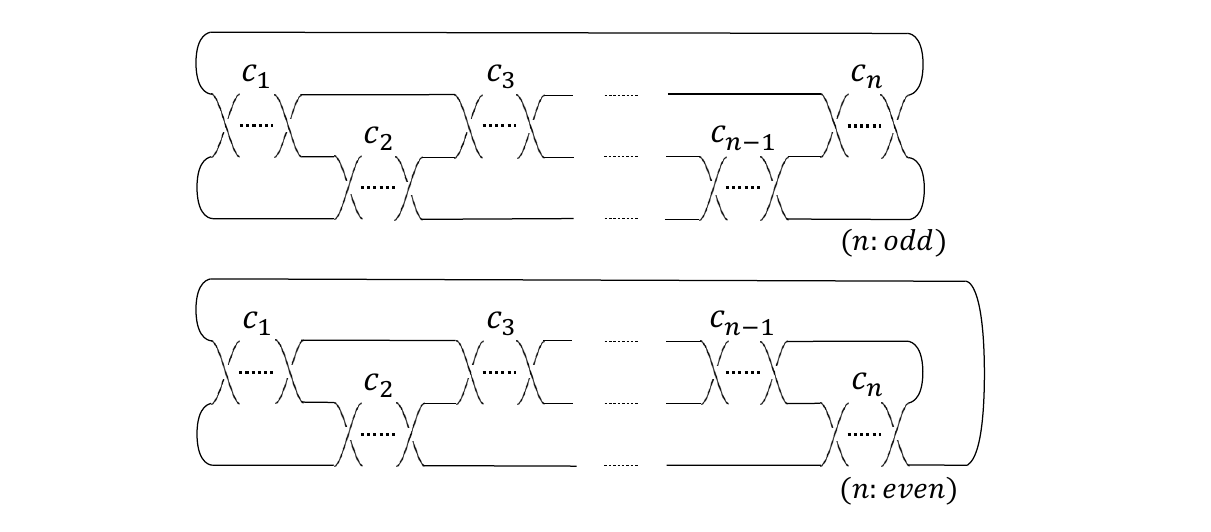}
 \caption{A two-bridge knot $C(c_1, c_2, ... , c_n)$.}
 \label{fig:twobridge}
\end{figure}
Previous work \cite{Naka3} computed the $\Delta$-unknotting numbers of certain families of two-bridge knots, as summarized in Propositions~\ref{prop:even} and \ref{prop:odd}.

\begin{proposition}[\cite{Naka3}]\label{prop:even}
Let $K=C(c_1, c_2, ... , c_n)$ be a two-bridge knot, where $c_i$ is a positive even integer for $1 \leq i \leq n$ and $n$ is even.
Then, we have 
\begin{equation*}
u^{\Delta}(K) = 
\frac{1}{4} \sum_{j=1}^{n/2} (\sum_{i=1}^{j} c_{2i-1}) c_{2j} \quad
\big(= -a_2(K)\big).
\end{equation*}
\end{proposition}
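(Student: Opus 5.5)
The plan has two halves: compute $a_2(K)$ and show it equals the stated sum, then exhibit exactly $|a_2(K)|$ $\Delta$-moves that unknot $K$. Okada's inequality $u^{\Delta}(K)\ge |a_2(K)|$ then forces equality. The lower bound is immediate once $a_2$ is known, so the work lies in the computation and the construction.

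For $a_2$, I would use the skein relation $\nabla_{K_+}-\nabla_{K_-}=z\nabla_{K_0}$ applied to the second coefficient, $a_2(K_+)-a_2(K_-)=\operatorname{lk}(K_0)$. Apply it at one crossing in each of the $c_{2j}$ twist regions. Since every $c_i$ is even, $K=C(c_1,\dots,c_n)$ is a knot with $n$ even. The two strands passing through an even twist box $c_{2j}$ are oppositely oriented, as is standard for Conway's normal form of a knot with all entries even. Changing two crossings of a $c_{2j}$ box therefore lowers $c_{2j}$ by $2$. Smoothing one crossing there produces a two-component link, and its linking number can be read off from the twist boxes of odd index to the left of that box. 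That linking number should be $\pm\frac{1}{2}\sum_{i\le j}c_{2i-1}$, where the half comes from each full twist contributing one crossing pair.

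Inducting on $c_{2j}$ down to $0$ gives
\[
-a_2(K)=\sum_{j}\frac{c_{2j}}{2}\cdot\frac12\sum_{i\le j}c_{2i-1}.
\]
Once some $c_{2j}=0$, the diagram collapses: the neighbouring odd-index boxes merge and the rest unknots. One must check that the signs come out consistently, given the sign convention where left-handed twists are positive in even positions.

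For the upper bound, I would realize each unit of the sum by one $\Delta$-move. A standard lemma (as in \cite{NaNaU}) says that a full twist on two parallel oppositely oriented strands, clasped through a full twist of another band, can be removed by one $\Delta$-move. More concretely, the pair consisting of one full twist in box $c_{2j}$ and one full twist in box $c_{2i-1}$ with $i\le j$ is exchanged by one $\Delta$-move. Alternatively, one $\Delta$-move converts $C(\dots,2a,2b,\dots)$ into a knot whose invariant drops by exactly one.

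The route I would follow is induction on $c_2$. Reducing $c_2$ by $2$ should cost $\frac{1}{4}\cdot 2\cdot c_1 = c_1/2$ $\Delta$-moves plus whatever the later terms require. I would try to show that $C(c_1,c_2,\dots)$ and $C(c_1,c_2-2,\dots)$ are related by $\frac12(\text{partial sums})$ $\Delta$-moves, using the fact that a clasp in which two bands are twisted together $m$ full times can be unclasped by $m$ $\Delta$-moves.

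The main obstacle is making this local $\Delta$-move construction precise in Conway's normal form. Each $\Delta$-move must lower $-a_2$ by exactly $1$, which is the only thing that makes the count tight. So I would first handle $n=2$, i.e. $C(2a,2b)$ with $u^\Delta=ab$, by an explicit band picture, and then extend by induction on $n$. In the inductive step I would treat $C(c_1,\dots,c_{n-2})$ plus the last two boxes as a band-sum whose twisting sees the cumulative odd-index total.
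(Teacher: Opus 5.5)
Your strategy is the one the paper uses, via Claims~\ref{claim:technique} and~\ref{claim:technique2} and Propositions~\ref{prop:lk} and~\ref{prop:a2}. You compute $a_2$ along a chain of crossing changes, realize the same chain by $\Delta$-moves at a cost of half the relevant partial sum per crossing change, and close with Okada's bound. Your main route lowers $c_2$ by $2$ at cost $c_1/2$ until $c_2=0$, merges $C(c_1,0,c_3,\dots)=C(c_1+c_3,c_4,\dots)$, and continues until the unknot $C(c_1+c_3+\cdots+c_{n-1},0)$. This is Claim~\ref{claim:technique2}(1) read from the other end of the continued fraction. Your linking numbers $\pm\frac12(c_1+c_3+\cdots+c_{2j-1})$ and your total $V$ are both correct. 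One slip: a single crossing change in a box lowers its entry by $2$, not ``two crossings''; your count of $c_{2j}/2$ skein steps already assumes this.

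The gap is that the only nontrivial geometric input is never established. Your iteration needs precisely
\[
d_G^{\Delta}\big(C(c_1,c_2,c_3,\dots,c_n),\,C(c_1,c_2-2,c_3,\dots,c_n)\big)\le \tfrac12 c_1 ,
\]
that is, a crossing change in box $c_2$ realized by $c_1/2$ $\Delta$-moves \emph{with box $c_1$ left intact}. You need $c_1$ intact because you spend it again on every later reduction of $c_2$ and then merge it into $c_1+c_3$. This is Claim~\ref{claim:technique}, which the paper imports from the earlier two-bridge paper with an explicit diagrammatic construction (Figure~\ref{fig:cct}). You only say you ``would try to show'' it, and the substitutes you list do not give it:
\begin{itemize}
\item ``Unclasping'' a clasp with $m$ full twists by $m$ $\Delta$-moves is, as stated, about undoing the clasp. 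It is not about changing one crossing of box $c_2$ while keeping box $c_1$.
\item ``One $\Delta$-move to some knot with $|a_2|$ smaller by one'' does not keep you in the family $C(\text{even},\dots,\text{even})$, where the induction can continue.
\end{itemize}
Without this lemma you have only the lower bound $u^{\Delta}(K)\ge V$.

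Also drop the induction-on-$n$ variant as you describe it. Lowering $c_n$ directly at cost $\frac12(c_1+\cdots+c_{n-1})$ would need the lemma with box $c_1$ replaced by the whole rational part $C(c_1,\dots,c_{n-1})$, which is stronger than Claim~\ref{claim:technique}. An induction on $n$ does work if you instead lower $c_{n-1}$ at cost $c_n/2$ and merge $c_{n-2}+c_n$; that is exactly how Claim~\ref{claim:technique2}(1) is organized.
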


\begin{proposition}[\cite{Naka3}]\label{prop:odd}
Let $K=C(c_1, c_2, ... , c_n)$ be a two-bridge knot, where $c_i$ is a positive even integer for $1 \leq i \leq n-1$, and $c_n$ is a positive odd integer. In particular, if $n$ = 1 ($K=C(c_1)$), then $c_1$ is a positive odd integer.
Then, we have $u^{\Delta}(K)=|a_2(K)|$.
\begin{enumerate}
    \item
    If $n$ is even, then
    \[
    u^{\Delta}(K)=
\frac{1}{4} \sum_{j=1}^{n/2} (\sum_{i=1}^{j} c_{2i-1}) c_{2j} + \frac{1}{8} ( \sum_{i=1}^{n/2} c_{2i-1} )^{2} \quad \big(= a_2(K)\big).
\]
    \item
    If $n$ is odd, then
    \[
    u^{\Delta}(K)=
\frac{1}{4} \sum_{j=1}^{(n-1)/2} (\sum_{i=1}^{j} c_{2i-1}) c_{2j} + \frac{1}{8} \{ ( \sum_{i=1}^{(n+1)/2} c_{2i-1} )^{2} -1 \} \quad \big(= a_2(K)\big).
\]
\end{enumerate}
\end{proposition}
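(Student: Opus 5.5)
We would prove the proposition by matching Okada's lower bound $u^{\Delta}(K)\ge |a_2(K)|$ from \cite{Oka1} with an explicit sequence of $\Delta$-moves. The argument has three parts: compute $a_2(K)$, show that one full twist can be removed from an even-indexed twist box at the cost prescribed by the formula, and reduce to a torus knot.

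\emph{Computing $a_2(K)$.} We would use induction on the entries of the continued fraction together with the skein relation $\nabla_{K_+}-\nabla_{K_-}=z\nabla_{K_0}$. This relation gives $a_2(K_+)-a_2(K_-)=\mathrm{lk}(K_0)$. Changing one crossing in the $i$-th twist box of $C(c_1,\dots,c_n)$ changes $c_i$ by $2$. The smoothing $K_0$ is a two-bridge link whose linking number can be read off from the diagram. For $i=2j$ the two strands in the box are oppositely oriented, since the preceding entries are even. In that case the linking number of $K_0$ comes out as $\frac12\sum_{i\le j}c_{2i-1}$. This is exactly the decrease of the displayed formula when $c_{2j}$ drops by $2$.

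Repeating this, we reduce every even-indexed entry to $0$. The boxes then merge into $C(N)$ with $N=\sum c_{2i-1}$ odd, which is the torus knot $T(2,N)$ with $a_2=(N^2-1)/8$. When $n$ is even, the last entry $c_n$ is odd, so we reduce it only to $1$. Then $C(\dots,c_{n-1},1)$ is rewritten as a diagram whose odd part sums differently, which produces the term $\frac18(\sum c_{2i-1})^2$. This bookkeeping confirms the stated values of $a_2$ in both parities, including the signs.

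\emph{Upper bound.} We must realize each reduction $c_{2j}\to c_{2j}-2$ with exactly $\frac12\sum_{i\le j}c_{2i-1}$ $\Delta$-moves. A full twist in the $2j$-th box can be viewed as a clasp between two antiparallel bands. One band carries the strands coming from the boxes $c_1,c_3,\dots,c_{2j-1}$, and the count of these strands is what produces $\sum_{i\le j}c_{2i-1}$ after halving. We would undo the clasp by sliding the arcs of one side across the other one pair at a time. Each slide of an arc pair across a crossing pair is a single $\Delta$-move, using the standard fact that a $\Delta$-move is equivalent to passing a strand across a clasp. This plan also assumes the count of such moves is half the number of arcs, which is the step most likely to need adjustment.

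After all even boxes are cleared, we finish with the known value $u^{\Delta}(T(2,N))=(N^2-1)/8$ from \cite{NaNaU}. Summing the costs gives an upper bound equal to $|a_2(K)|$, and combined with Okada's bound this proves the equality.

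\emph{Main obstacle.} The hardest step is the geometric one: showing that removing one full twist from the $2j$-th box costs no more than the change in $a_2$. The risk is that sliding the strands past the intermediate odd boxes creates extra crossings that must themselves be removed. We would first try an isotopy that flypes the odd boxes $c_1,\dots,c_{2j-1}$ into parallel twisted bands. In that picture the full twist becomes a single clasp of two bands with the correct strand multiplicities, and the $\Delta$-move count can be made exact.
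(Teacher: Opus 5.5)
\paragraph{Verdict.} Your overall plan is sound, but the upper bound, which is its only non-formal step, is not established. The plan is Okada's lower bound (Proposition~\ref{prop:a2}), $a_2$ by telescoping $a_2(k_+)-a_2(k_-)=\mathrm{lk}(k_0)$, and an upper bound by stripping full twists from the even-indexed boxes down to a $(2,N)$ torus knot. This is the architecture behind Claims~\ref{claim:technique}--\ref{claim:technique2} and the computations of Section~\ref{sec3}. Your $a_2$ bookkeeping is fine. There the order of reductions does not matter, because the linking number $\frac12\sum_{i\le j}c_{2i-1}$ does not depend on $c_2,\dots,c_{2j-2}$.

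\paragraph{The gap.} You leave the upper bound as a heuristic that you yourself flag as doubtful. You try to remove a full twist from box $c_{2j}$ while the boxes $c_2,\dots,c_{2j-2}$ are still present, and you charge one $\Delta$-move per strand pushed across the clasp. That is a geometric count. The quantity $\frac12\sum_{i\le j}c_{2i-1}$ is only the algebraic linking of the loop at the crossing with the rest of the knot. With the intermediate even boxes in place, the rest of the knot winds through that loop in a more complicated way, and you give no reason the number of strands you must push equals the linking number. The proposed flype does not repair this. The boxes $c_1$ and $c_3$ are separated by the nontrivial box $c_2$, and they become a single twist box only after $c_2$ has been reduced to $0$. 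That reduction is paid for in $\Delta$-moves; it is not achieved by isotopy.

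\paragraph{The missing idea.} Fix the order of the reductions so that intermediate boxes never occur.
\begin{itemize}
\item First lower $c_2$ to $0$. Each step is a crossing change in the second box of $C(c_1,c_2,\dots)$ with $c_1$ even. Claim~\ref{claim:technique} realizes it by $\frac12c_1$ $\Delta$-moves, one per full twist of the first box.
\item Then $C(c_1,0,c_3,c_4,\dots)=C(c_1+c_3,c_4,\dots)$, so $c_4$ now sits in the second box and costs $\frac12(c_1+c_3)$ per step. Continue in the same way; this is the mirror image of Claim~\ref{claim:technique2}.
\item Clearing $c_{2j}$ costs $\frac14(c_1+c_3+\dots+c_{2j-1})c_{2j}$. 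These are exactly the terms of the first sum in the formula.
\end{itemize}

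\paragraph{Finishing the two parities.} For $n$ odd you reach $C(N)=T(2,N)$, which contributes $\frac18(N^2-1)$.

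For $n$ even, your sentence ``after all even boxes are cleared'' is wrong as stated, since $c_n$ is odd and cannot be cleared. Instead:
\begin{itemize}
\item You reach $C(S,c_n)$ with $S=c_1+c_3+\dots+c_{n-1}$.
\item Lower $c_n$ to $1$ with the same lemma, at total cost $\frac14S(c_n-1)$.
\item Then use $C(S,1)=C(S+1)$ together with $u^{\Delta}(T(2,S+1))=\frac18S^2+\frac14S$.
\end{itemize}
The total is exactly the expression in (1). In both parities the total equals $a_2(K)$, so Okada's bound closes the argument.
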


We shall use Proposition~\ref{prop:oddnew} throughout this paper.
It is obtained from Proposition~\ref{prop:odd}
by replacing the assumption that $c_n$ is odd with the condition that $c_1$ be odd.
Since the proof follows the same line of argument, we omit the details.

\begin{proposition}[\cite{Naka3}]\label{prop:oddnew}
Let $K=C(c_1, c_2, ... , c_n)$ be a two-bridge knot, where $c_1$ is a positive odd integer, and $c_i$ is a positive even integer for $2 \leq i \leq n$.
Then, we have $u^{\Delta}(K)=|a_2(K)|$.
\begin{enumerate}
    \item
    If $n$ is even, then
    \[
    u^{\Delta}(K)=
\frac{1}{4} \sum_{j=1}^{n/2} (\sum_{i=1}^{j} c_{2i-1}) c_{2j} + \frac{1}{8} ( \sum_{i=1}^{n/2} c_{2i} )^{2} \quad \big(= a_2(K)\big).
\]
    \item
    If $n$ is odd, then
    \[
    u^{\Delta}(K) =
\frac{1}{4} \sum_{j=1}^{(n-1)/2} (\sum_{i=1}^{j} c_{2i}) c_{2j+1} + \frac{1}{8} \{ ( \sum_{i=1}^{(n+1)/2} c_{2i-1} )^{2} -1 \} \quad \big(= a_2(K)\big).
\]
\end{enumerate}
\end{proposition}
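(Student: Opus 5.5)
The plan is to deduce Proposition~\ref{prop:oddnew} from Proposition~\ref{prop:odd}, rather than re-running the argument of \cite{Naka3}. The tool is the symmetry of Conway's normal form: the diagram $C(c_1, c_2, \dots, c_n)$ rotated by $\pi$ in the plane is a diagram of the form $C(c_n, c_{n-1}, \dots, c_1)$. Two invariants behave well under the operations this introduces. First, $u^{\Delta}$ and $a_2$ do not change under reversing orientation. Second, both are also unchanged under taking mirror images, since $\nabla_{K^*}(z)=\nabla_K(-z)$ keeps $a_2$, and a $\Delta$-move mirrors to a $\Delta$-move. So it is enough to identify $K=C(c_1,\dots,c_n)$, up to mirror image and orientation, with $K'=C(d_1,\dots,d_n)$, where $d_i=c_{n+1-i}$. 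Then $d_n=c_1$ is a positive odd integer and $d_1,\dots,d_{n-1}$ are positive even integers, which is exactly the hypothesis of Proposition~\ref{prop:odd}.

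The first step is to check the sign convention carefully. In Figure~\ref{fig:twobridge}, a right-handed half-twist counts as positive in odd positions and a left-handed one in even positions. Under reversal, position $i$ goes to position $n+1-i$.
\begin{itemize}
\item If $n$ is odd, these positions have the same parity, so the handedness convention is preserved and $K=K'$ up to orientation.
\item If $n$ is even, the parity flips. The rotated diagram is then $C(-d_1,\dots,-d_n)$, which is the mirror image of $K'$.
\end{itemize}
In both cases $u^{\Delta}(K)=u^{\Delta}(K')$ and $a_2(K)=a_2(K')$. Proposition~\ref{prop:odd} then gives $u^{\Delta}(K)=|a_2(K)|$ directly.

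The second step is the reindexing needed to obtain the stated formulas.
\begin{itemize}
\item For $n$ even, the odd-position entries $d_1,d_3,\dots,d_{n-1}$ are $c_n,c_{n-2},\dots,c_2$. Hence $\frac18(\sum_{i} d_{2i-1})^2=\frac18(\sum_i c_{2i})^2$. The double sum $\frac14\sum_j(\sum_{i\le j} d_{2i-1})d_{2j}$ equals $\frac14$ times the sum of all products $d_{\mathrm{odd}}\,d_{\mathrm{even}}$ with the odd index smaller than the even index. In terms of the $c$'s, this is the sum of all products $c_{2j-1}c_{2i}$ with $2i>2j-1$, that is, $\frac14\sum_j(\sum_{i\le j}c_{2i-1})c_{2j}$ after regrouping.
\item For $n$ odd, the odd positions of $d$ are again odd positions of $c$. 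This gives the term $\frac18\{(\sum c_{2i-1})^2-1\}$ unchanged. The cross terms become products $c_{2i}c_{2j+1}$ with $2i<2j+1$, i.e.\ $\frac14\sum_j(\sum_{i\le j}c_{2i})c_{2j+1}$.
\end{itemize}
The sign identity $u^{\Delta}=a_2$ (rather than $-a_2$) carries over because $a_2$ is preserved.

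I expect the main obstacle to be the first step: confirming rigorously, with the paper's handedness convention, that the $\pi$-rotation really yields $C(c_n,\dots,c_1)$ or its mirror, and not some other diagram. This matters especially for $n$ even, where the closure at the end of the diagram differs from the closure at the start. If that identification turned out to be delicate, I would fall back on repeating the proof of \cite{Naka3} directly. That route has two parts.
\begin{itemize}
\item For the lower bound, use Okada's inequality $u^{\Delta}\ge|a_2|$, with $a_2$ computed from a Seifert matrix of the standard genus surface of $C(c_1,\dots,c_n)$, or by the skein relation applied inductively on the last twist box.
\item For the upper bound, give an explicit sequence of $\Delta$-moves: each pair of even twist boxes is cleared by $\frac14 c_{2i-1}c_{2j}$ moves, and the odd box $c_1$ reduces to the torus knot $T(2,c_1)$, for which $u^{\Delta}=(c_1^2-1)/8$.
\end{itemize}
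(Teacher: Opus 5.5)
Your argument is correct, but it is not the route the paper takes. The paper does not deduce Proposition~\ref{prop:oddnew} formally from Proposition~\ref{prop:odd}. It only says that the original proof goes through with the odd entry moved to the front. That means recomputing $a_2$ by the skein relation (Proposition~\ref{prop:lk}), building an explicit $\Delta$-move sequence that starts from the odd box, and closing with Proposition~\ref{prop:a2}; for $n$ odd, Claim~\ref{claim:technique2}(2) collapses $C(c_1,\dots,c_n)$ to the torus knot $C(c_1+c_3+\dots+c_n)$ in $W$ moves.

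You instead observe that Proposition~\ref{prop:oddnew} is Proposition~\ref{prop:odd} read backwards. Your reindexing of the cross terms and the square terms is right in both parities.

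The identification you flag as the main obstacle is best settled algebraically rather than by rotating the picture, since, as you note, for $n$ even the two end closures differ. Each matrix $\bigl(\begin{smallmatrix} c_i & 1\\ 1 & 0\end{smallmatrix}\bigr)$ is symmetric, so transposing their product shows that if $p/q=[c_1,\dots,c_n]$ then $[c_n,\dots,c_1]=p/q'$ with $qq'\equiv(-1)^{n+1}\pmod p$. Schubert's classification then gives the same knot for $n$ odd and the mirror image for $n$ even, which is exactly your parity bookkeeping. Mirror invariance of $a_2$ and $u^{\Delta}$ finishes the argument.

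Each route has its own advantage.
\begin{itemize}
\item Your route buys economy: no new $a_2$ computation and no new move sequence are needed. It also explains why the two formulas are reversals of each other.
\item The paper's route buys an explicit move sequence that is local to the rational tangle, with $c_1$ in front. This is what gets transplanted into Montesinos diagrams in Claim~\ref{claim:ABCDEF} and Theorem~\ref{thm:main}. The reversal symmetry belongs to the closed two-bridge knot and does not pass to a tangle sitting inside a Montesinos diagram.
\end{itemize}
The paper only ever applies Proposition~\ref{prop:oddnew} to closed two-bridge knots, so your argument suffices for all its uses.

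Your fallback sketch is slightly off. The terminal torus knot is $T(2,c_1+c_3+\dots+c_n)$ for $n$ odd and $T(2,c_2+c_4+\dots+c_n+1)$ for $n$ even, not $T(2,c_1)$. The fallback is not needed, however.
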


\subsection{Pretzel Knots and Their $\Delta$-Unknotting Numbers} 
\label{subsec22}

\leavevmode

A pretzel link $P(p_1, p_2, ... , p_n)$ is a link obtained by connecting $n$ twisted bands, where each band has $p_i$ half-twists.
The sign of $p_i$ determines the direction of the $i$th twist region
(see Figure \ref{fig:pretzel}).
A pretzel link is a knot if and only if precisely one $p_i$ is even, or all $p_i$ are odd and $n$ is odd.

A positive pretzel knot is a pretzel knot whose standard diagram has either all positive crossings or all negative crossings, as follows. There are two types of positive pretzel knot: the odd type and the even type.
For a positive pretzel knot of odd type, both $n$ and all $p_i$ are odd and positive.
For a positive pretzel knot of even type, one of the following holds:
\begin{itemize}
    \item even type $A$ : $n$ is even and positive, $p_1$ is even and positive, and $p_i$ $(2 \leq i \leq n)$ are odd and positive; or
    \item even type $B$ : $n$ is odd and positive, $p_1$ is even and negative, and $p_i$ $(2 \leq i \leq n)$ are odd and positive.
\end{itemize}
\begin{figure}
    \centering
    \includegraphics[width=0.8\linewidth]{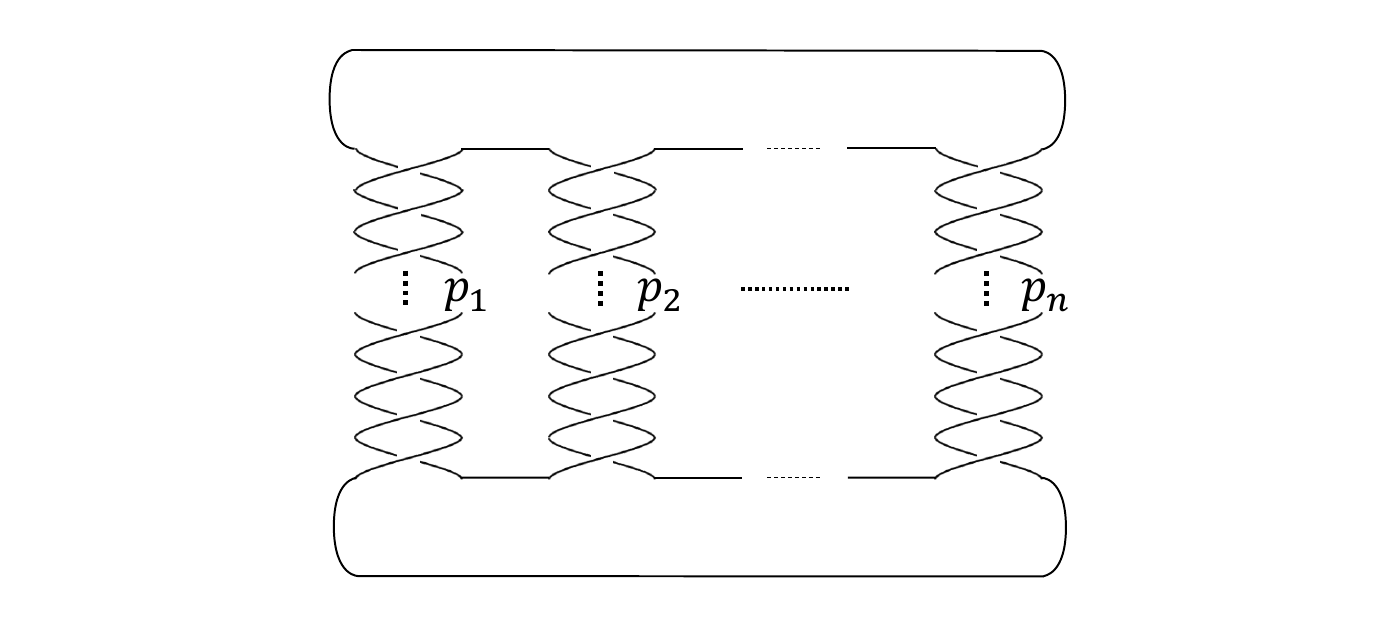}
    \caption{A pretzel knot $P(p_1, p_2, ... , p_n)$.}
    \label{fig:pretzel}
\end{figure}
Previous work \cite{Naka4} computed the $\Delta$-unknotting numbers of positive pretzel knots, as summarized in Propositions~\ref{prop:preodd} and \ref{prop:preeven}.

\begin{proposition}[\cite{Naka4}]\label{prop:preodd}
Let $P=P(p_1, p_2, ... , p_n)$ be a positive pretzel knot of odd type, where $p_i$ is a positive odd integer for $1 \leq i \leq n$ and $n$ is odd.
Then, we have
\begin{equation*}
u^{\Delta}(P) = 
\frac{1}{4} \sum_{1 \leq i<j }^{n} p_{i}p_{j} +  \frac{1}{8} (n-1) \quad \big(= a_2(P)\big).
\end{equation*}
\end{proposition}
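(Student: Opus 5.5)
The proof has two halves: the lower bound $u^{\Delta}(P)\geq |a_2(P)|$ from Okada's inequality \cite{Oka1}, and a matching upper bound from an explicit sequence of $\Delta$-moves. I would first compute $a_2(P)$ and show it equals
\[
\frac{1}{4}\sum_{1\le i<j}^{n}p_ip_j+\frac{1}{8}(n-1).
\]
Then it remains to unknot $P$ with exactly that many $\Delta$-moves.

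For $a_2$ I would use the skein relation $\nabla_{K_+}-\nabla_{K_-}=z\nabla_{K_0}$ at a crossing of the last band. Since $n$ and all $p_i$ are odd, the two strands in each band are oppositely oriented in the standard diagram. Smoothing a crossing of the $n$th band therefore gives a two-component link, and two crossing changes reduce $p_n$ by $2$. This yields the recursion
\[
a_2(P(p_1,\dots,p_n))-a_2(P(p_1,\dots,p_n-2))=\pm\operatorname{lk}(L_0)+\operatorname{lk}(L_0').
\]
The smoothed links have linking numbers expressible through $\sum_{i<n}p_i$. Iterating down to $p_n=1$, and then peeling bands with $p_i=1$ (note $P(p_1,\dots,p_{n-1},1)$ is again a pretzel knot of the same form), I would verify the closed formula by induction on $\sum p_i$. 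The base case is $P(1,1,\dots,1)$ with $n$ bands, which is the torus knot $T(2,n)$ with $a_2=(n^2-1)/8$. This agrees with $\frac14\binom n2+\frac18(n-1)$.

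For the upper bound I would use induction on $\sum p_i$, exhibiting a way to lower $p_i+p_j$ by $2$ at a cost equal to the drop in the formula. A $\Delta$-move acting on three strands taken from bands $i$, $j$ and a neighbouring band can transform two full twists in different bands. The concrete step I would try is as follows. Reducing $p_j$ by $2$ lowers the formula by $\frac12\sum_{i\ne j}p_i$. So I want to realize "remove one full twist from band $j$" by $\frac12\sum_{i\neq j}p_i$ $\Delta$-moves. Since $n-1$ is even and each $p_i$ is odd, $\sum_{i\ne j}p_i$ is even, so this number is an integer.

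The geometric picture I would aim for is this: pair up the other bands, pushing the twist of band $j$ across one crossing of each other band. Each pair of crossings from two other bands together with band $j$ forms a clasp configuration that one $\Delta$-move resolves. This follows the standard fact that a $\Delta$-move changes $a_2$ by $\pm1$ and can pass a strand through a clasp.

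The main obstacle is making this pairing argument precise: checking that exactly $\frac12\sum_{i\ne j}p_i$ $\Delta$-moves suffice to change $P(\dots,p_j,\dots)$ into $P(\dots,p_j-2,\dots)$. I would first try it for $n=3$ with small twists, e.g. $P(1,1,3)\to P(1,1,1)$ costing one move. Then I would generalize by isotoping band $j$'s full twist into a twisted annulus around the other bands and using the known lemma that $\Delta$-moves change a full twist on $2m$ parallel strands at cost $m(m-1)/2$-type counts. Once all $p_i=1$, the result is $T(2,n)$. Its $\Delta$-unknotting number $(n^2-1)/8$ is known from \cite{NaNaU}, which finishes the upper bound.
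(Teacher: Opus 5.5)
Your lower bound via Proposition~\ref{prop:a2} and your computation of $a_2$ via Proposition~\ref{prop:lk} use the same tools as the paper, but the recursion is misstated. A \emph{single} crossing change in band $n$, followed by a Reidemeister~II move, already turns $P(p_1,\dots,p_n)$ into $P(p_1,\dots,p_n-2)$. Up to kinks, the smoothing is the two-component link $P(p_1,\dots,p_{n-1})$. Every one of its $\sum_{i<n}p_i$ crossings is a positive crossing between the two components, so $a_2(P(\dots,p_n))-a_2(P(\dots,p_n-2))=\frac12\sum_{i<n}p_i$ exactly, with no second linking-number term. Iterating this down to $T(2,n)$ gives the formula; you never delete a band, you only lower each $p_i$ to $1$.

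The genuine gap is the upper bound. Everything rests on $d_G^{\Delta}\big(P(\dots,p_j,\dots),P(\dots,p_j-2,\dots)\big)\le\frac12\sum_{i\ne j}p_i$, which you state as a goal but never establish. Your pairing picture is not a defined operation. In the smoothing at a crossing of band $j$, the two lobes meet in every other band, and each of those bands has an \emph{odd} number of half-twists. The mechanism of Claim~\ref{claim:technique} (one $\Delta$-move per full twist of an adjacent even twist region) therefore pays only for $\sum_{i\ne j}\frac{p_i-1}{2}$ moves and leaves one half-twist in each of the $n-1$ other bands. The remaining $\frac{n-1}{2}$ moves would have to dispose of these half-twists in pairs taken from different bands, and that needs an explicit isotopy and $\Delta$-move picture, the pretzel analogue of Figure~\ref{fig:cct}, which you do not give.

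The fallback via a twisted annulus cannot work even in principle. Grant the isotopy that converts the full twist of band $j$ into a full twist on the other $2(n-1)$ strands; note these strands are not parallel, since each band contributes one strand in each direction. A lemma whose cost depends only on the number of strands then gives a bound independent of the $p_i$. But Proposition~\ref{prop:a2} forces $d_G^{\Delta}\ge\frac12\sum_{i\ne j}p_i$, which is unbounded. Already for $n=3$ your count would be $1$, while $\frac12(p_1+p_2)$ can be arbitrarily large.

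The paper does not reprove this proposition; it imports it from \cite{Naka4}. So this explicit construction is exactly what you would have to supply.
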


\begin{proposition}[\cite{Naka4}]\label{prop:preeven}
Let $P=P(p_1, p_2, ... , p_n)$ be a positive pretzel knot of even type, where $p_1$ is an even integer, and $p_i$ is a positive odd integer for $2 \leq i \leq n$. 
Then, we have
\begin{enumerate}
    \item
    If $n$ is even and $p_1 > 0$, then
    \[
    u^{\Delta}(P)=
\frac{1}{8} \sum_{i=1}^{n} p_{i}^2 + \frac{1}{4} p_1 \sum_{i=2}^{n} p_i - \frac{1}{8} (n-1) \quad \big(= a_2(P)\big).
\]
    \item
    If $n$ is odd and $p_1 < 0$, then
    \[
    u^{\Delta}(P)=
\frac{1}{8} \sum_{i=2}^{n} p_{i}^2 - \frac{1}{4} p_1 \sum_{i=2}^{n} p_i - \frac{1}{8} (n-1) \quad \big(= a_2(P)\big).
\]
\end{enumerate}
\end{proposition}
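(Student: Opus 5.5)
The statement immediately above is Proposition~\ref{prop:preeven}, which is cited from \cite{Naka4}. My plan follows the standard two-sided strategy for $\Delta$-unknotting numbers.

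For the lower bound I would use Okada's inequality $u^{\Delta}(P) \geq |a_2(P)|$. This reduces the lower bound to computing $a_2(P)$ exactly and checking that it equals the stated expression, which is nonnegative. To compute $a_2$ I would use the skein relation $a_2(K_+) - a_2(K_-) = \mathrm{lk}(L_0)$, applied at the crossings of the even band $p_1$. Changing a crossing in the even band lowers $|p_1|$ by $2$, and smoothing it produces the two-component link $P(p_2,\dots,p_n)$ with a residual twist. The linking number of that link is computable from the orientations: in the even type the two strands of the $p_1$ band are antiparallel, so the linking number is $\tfrac12\sum_{i\ge2}p_i$ up to sign, plus a contribution from the remaining $p_1$ twists. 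Inducting down to $p_1=0$ leaves the connected sum of the torus knots $T(2,p_i)$, or a related knot with known $a_2$, namely $\tfrac18(p_i^2-1)$ for each summand. Summing gives
\[
\tfrac18\sum_{i\ge2} p_i^2 - \tfrac18(n-1) + \tfrac14 p_1\sum_{i\ge2}p_i + \tfrac18 p_1^2
\]
in case (1). Case (2) is the same computation with $p_1<0$ and orientation reversed, which gives the sign $-\tfrac14 p_1\sum p_i$ and drops the $p_1^2$ term.

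For the upper bound I would construct explicit sequences of $\Delta$-moves. The key local tool is that in a positive diagram, a $\Delta$-move applied to a clasp between two twisted bands can trade a full twist of one band against another, lowering $a_2$ by exactly $1$. I would peel crossings in pairs from the even band. Each full twist removed should cost $\tfrac14\sum_{i\ge2}p_i + (\text{current } p_1\text{ term})$ moves, realized by pushing a $\Delta$-triangle through the odd bands one at a time. At $p_1=0$ the knot is a connected sum of $T(2,p_i)$, whose $\Delta$-unknotting number $\tfrac18(p_i^2-1)$ is known from the torus knot result of \cite{NaNaU}. This gives a total count matching $|a_2|$.

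I expect the main obstacle to be making this count exact. The moves must be arranged so that each one decreases $a_2$ by exactly one. In particular the number of moves needed to remove a full twist of the even band has to be precisely the linking number of the smoothed link. When that linking number is a half-integer combination, adjacent bands must be paired up, and I expect that pairing to be where the parity conditions in the hypotheses on $n$ are used.
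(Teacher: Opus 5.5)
The paper gives no proof of this proposition; it quotes it from \cite{Naka4}. Your template is the one the paper uses for Theorem~\ref{thm:main}: Okada's bound, an exact skein computation of $a_2$, and an explicit sequence of exactly $a_2(P)$ $\Delta$-moves. However, the orientation claim behind your $a_2$ computation is wrong in case (1).

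Trace the diagram. Every odd band has parallel strands. The even band has parallel strands when $n$ is even and antiparallel strands when $n$ is odd, which is why positivity forces $p_1>0$ in case (1) and $p_1<0$ in case (2). So in case (1), smoothing a crossing of the even band gives $L_0=P(p_1-1,p_2,\dots,p_n)$. This is a two-component link in which every crossing is between the two components, so $\mathrm{lk}(L_0)=\frac12(p_1-1+\sum_{i\ge2}p_i)$. Summing over $p_1\to p_1-2\to\cdots\to 0$ gives $\frac18p_1^2+\frac14p_1\sum_{i\ge2}p_i$.

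In your antiparallel picture, the leftover twists of the first band would be nugatory kinks that contribute nothing to the linking number. Moreover, for even $n$ the pretzel $P(p_2,\dots,p_n)$ is a knot, not a link. So the $\frac18p_1^2$ term has no source in your argument; your description is correct only in case (2). The proofs of parts (2) and (3) of Theorem~\ref{thm:main} show exactly this difference: the linking number contains a contribution from the remaining twists of the first band in part (2) but not in part (3).

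The genuine gap is the upper bound, which you do not establish. Each $\Delta$-move changes $a_2$ by $\pm1$ (Proposition~\ref{prop:a2}), so nothing has to be arranged move by move. What you need is a lemma that a crossing change in the even band can be realized by exactly $|\mathrm{lk}(L_0)|$ $\Delta$-moves: $\frac12(p_1-1+\sum_{i\ge2}p_i)$ in case (1) and $\frac12\sum_{i\ge2}p_i$ in case (2). Your per-step cost $\frac14\sum_{i\ge2}p_i+\cdots$ is off by a factor of two; by Proposition~\ref{prop:a2}, no crossing change can cost fewer than $|\mathrm{lk}(L_0)|$ moves. You also give no mechanism that realizes the count.

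The paper's model for such a lemma is Claim~\ref{claim:technique} (Figure~\ref{fig:cct}). There the crossing is slid through the $\frac12c_1$ full twists of an adjacent even band, one $\Delta$-move per full twist, after which it is nugatory. If you imitate this in the pretzel diagram, note that the two lobes of $L_0$ meet in every odd band, and in case (1) also in the remaining $p_1-1$ crossings of the first band. Each of these bands has an odd number of inter-lobe crossings, so a half-twist is left over in each. You would have to show explicitly how these leftover half-twists are absorbed two at a time, at one $\Delta$-move per pair. There are $n$ of them in case (1) and $n-1$ in case (2), an even number either way. This construction is the real content of \cite{Naka4}; without it you only have $u^{\Delta}(P)\ge a_2(P)$.
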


\subsection{Montesinos Knots and Rational Tangles} 
\label{subsec23}

\leavevmode

A Montesinos knot $K = M\big(b ; (\alpha_1, \beta_1), (\alpha_2, \beta_2), \ldots, (\alpha_r, \beta_r)\big)$ with $r$ branches is a knot as illustrated in Figure \ref{fig:montesinos}.
Here $r$, $b$, $\alpha_k$, and $\beta_k$ are integers 
such that $r \geq 0$,  $\alpha_k \geq 2$, and $\gcd(\alpha_k, \beta_k)=1$.
Let $T(\beta_k / \alpha_k)$ denote the rational tangle of slope $\beta_k / \alpha_k$ (see Figure  \ref{fig:tangle}).
\begin{figure}[htbp]
    \centering
    \includegraphics[width=0.8\linewidth]{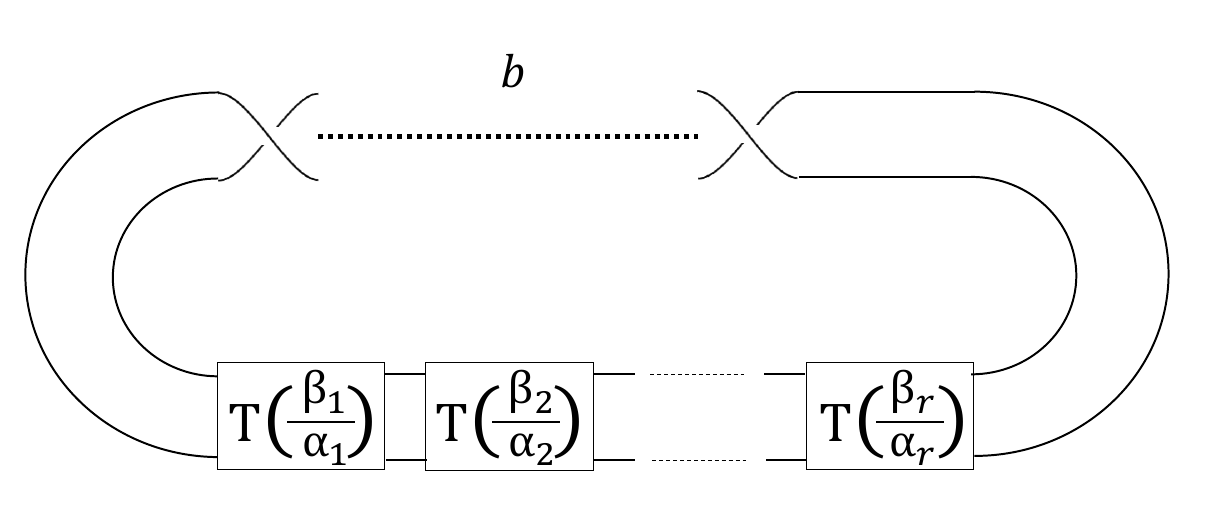}
    \caption{A Montesinos knot $M\big(b ; (\alpha_1, \beta_1), (\alpha_2, \beta_2), \ldots, (\alpha_r, \beta_r)\big)$.}
    \label{fig:montesinos}
\end{figure}
\begin{figure}[htbp]
    \centering
    \includegraphics[width=0.8\linewidth]{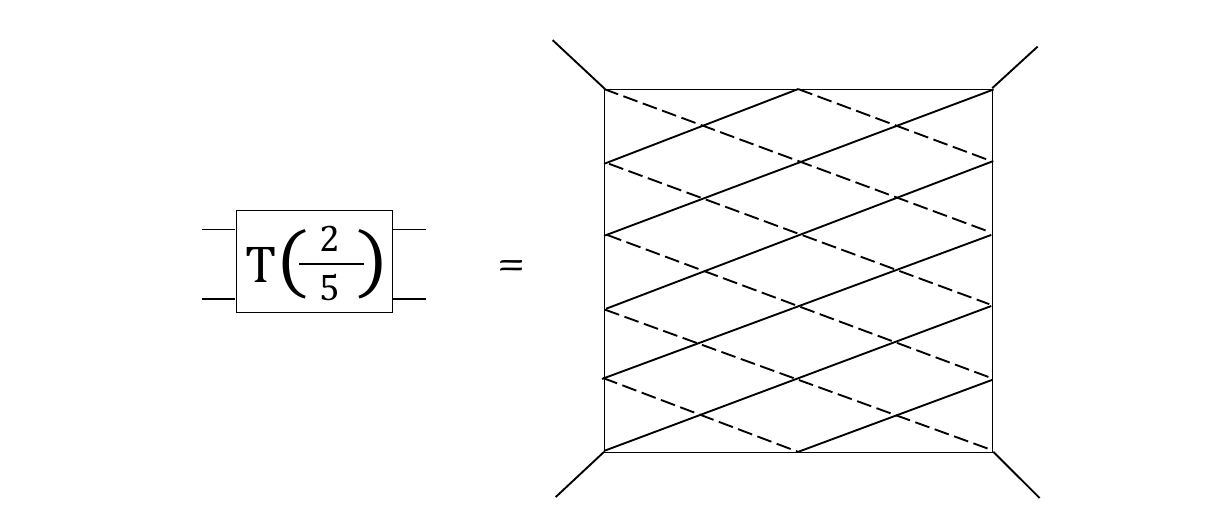}
    \caption{The tangle $T(2/5)$.}
    \label{fig:tangle}
\end{figure}
For the tangle $T(\beta_k / \alpha_k)$,
let $L \big( T(\beta_k / \alpha_k) \big)$ denote the link obtained by the standard left/right closure
(see Figure \ref{fig:closure}).
\begin{figure}[htbp]
    \centering
    \includegraphics[width=0.3\linewidth]{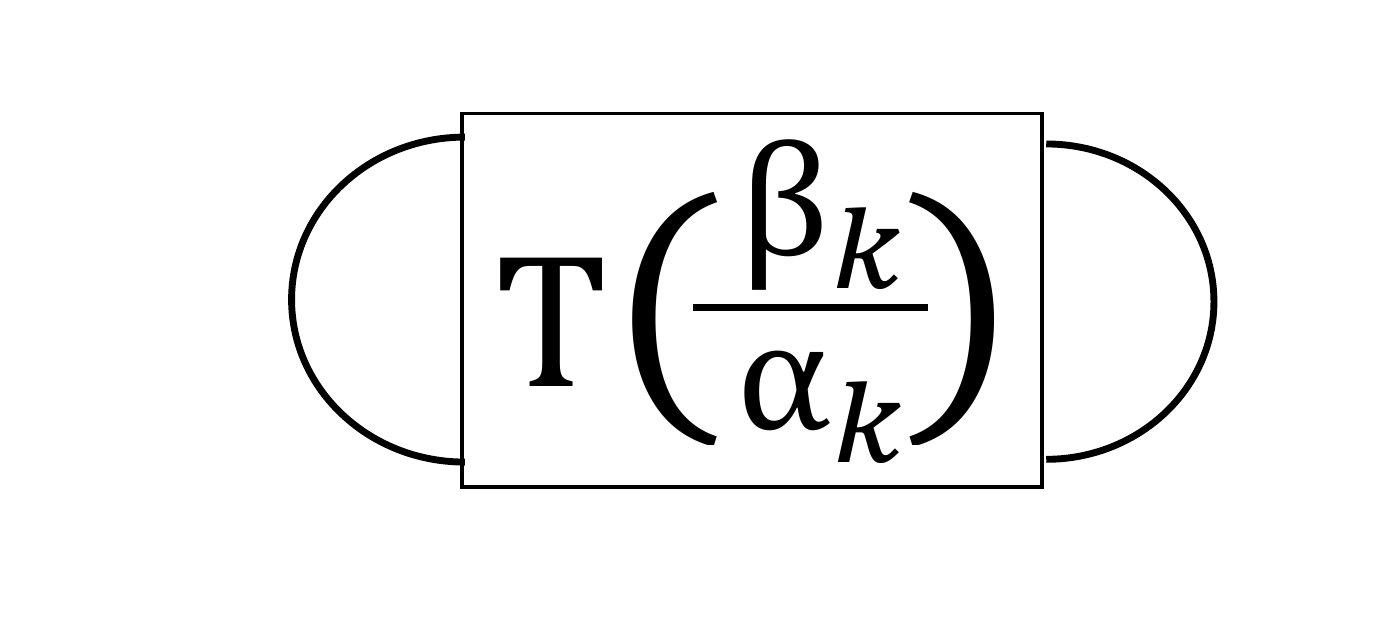}
    \caption{A link
    $L \big( T(\beta_k / \alpha_k) \big)$
    obtained by the standard left/right closure
    of the tangle $T(\beta_k / \alpha_k)$.}
    \label{fig:closure}
\end{figure}
A continued fraction expansion of a rational number $\beta / \alpha$, where we assume $-\alpha < \beta < \alpha$, is a finite sequence $s_1, s_2$, ... , $s_n$ such that
\begin{align*}
\frac{\beta}{\alpha}
& = \cfrac{1}{\,s_1 - \cfrac{1}{\,s_2 - \cfrac{1}{\,\ddots - \cfrac{1}{s_n}}}} \\
& = \cfrac{1}{\,s_1 + \cfrac{1}{\,- s_2 + \cfrac{1}{\,\ddots + \cfrac{1}{(-1)^{n-1} s_n}}}},
\end{align*}
and $s_i \neq 0$ for $1 \leq i \leq n.$
The tangle $T(\beta / \alpha)$ is then representable as in Figure \ref{fig:tangletwo} (see \cite{HaM}).
\begin{figure}[htbp]
    \centering
    \includegraphics[width=1\linewidth]{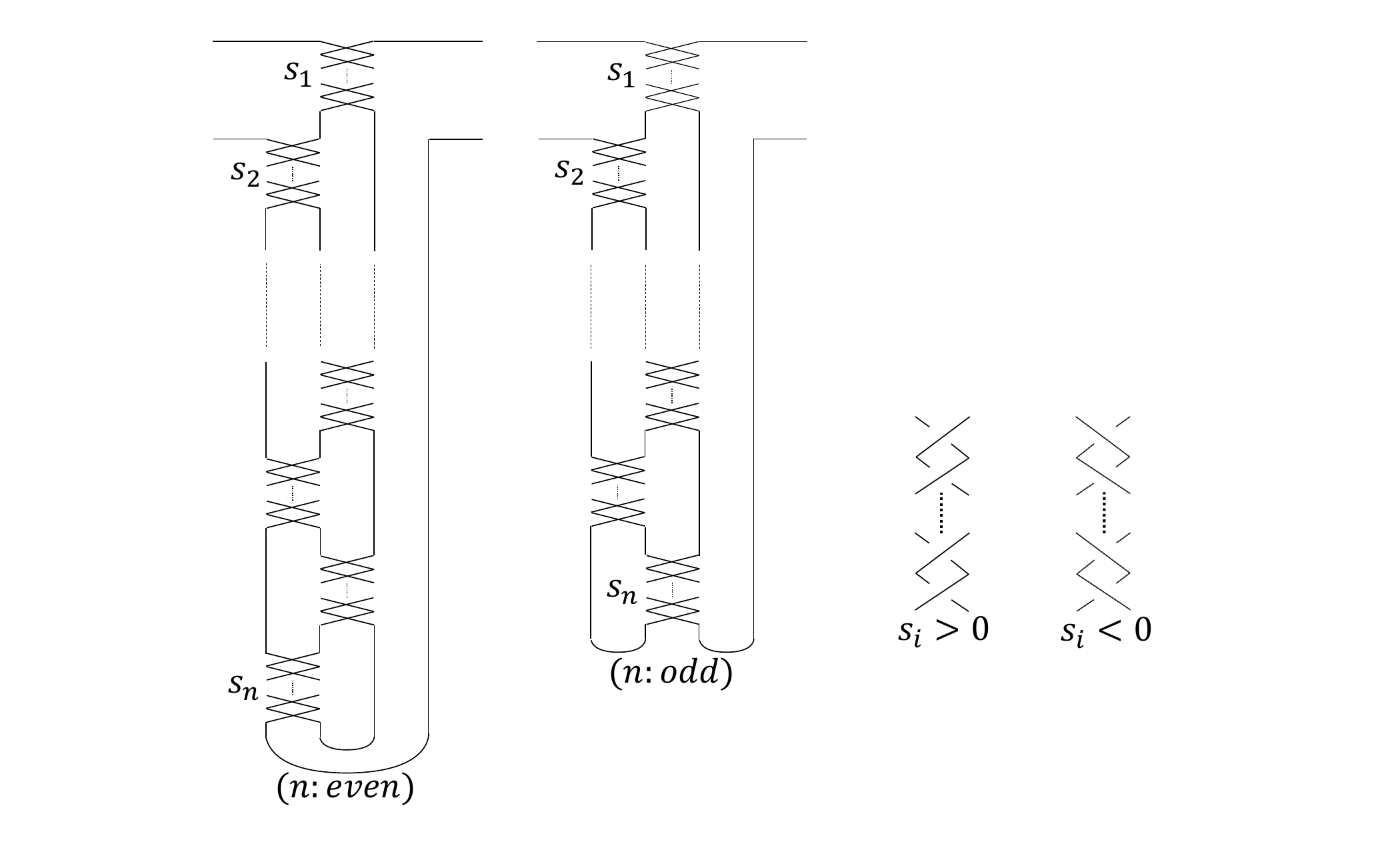}
    \caption{Another representation of a rational tangle for illustration.}
    \label{fig:tangletwo}
\end{figure}
In this paper, we consider the following Montesinos knot
$K = M(0; (\alpha_1, \beta_1), \dots, (\alpha_r, \beta_r))$,
where for each $1 \le k \le r$, the rational number $\beta_k/\alpha_k$
admits a finite continued fraction expansion
\[
\frac{\beta_k}{\alpha_k}
=
\cfrac{1}{c^{(k)}_1
 + \cfrac{1}{c^{(k)}_2
 + \cfrac{1}{\ddots
 + \cfrac{1}{c^{(k)}_{n(k)}}}}},
\]
where $n(k)$ is a positive integer, and $c^{(k)}_i \neq 0$ for $1 \leq i \leq n(k)$.

The tangle $T(\beta_k / \alpha_k)$ is represented as in Figure \ref{fig:tangletwoc}, 
a presentation adopted to facilitate comparison with two-bridge knots.
\begin{figure}[htbp]
    \centering
    \includegraphics[width=1\linewidth]{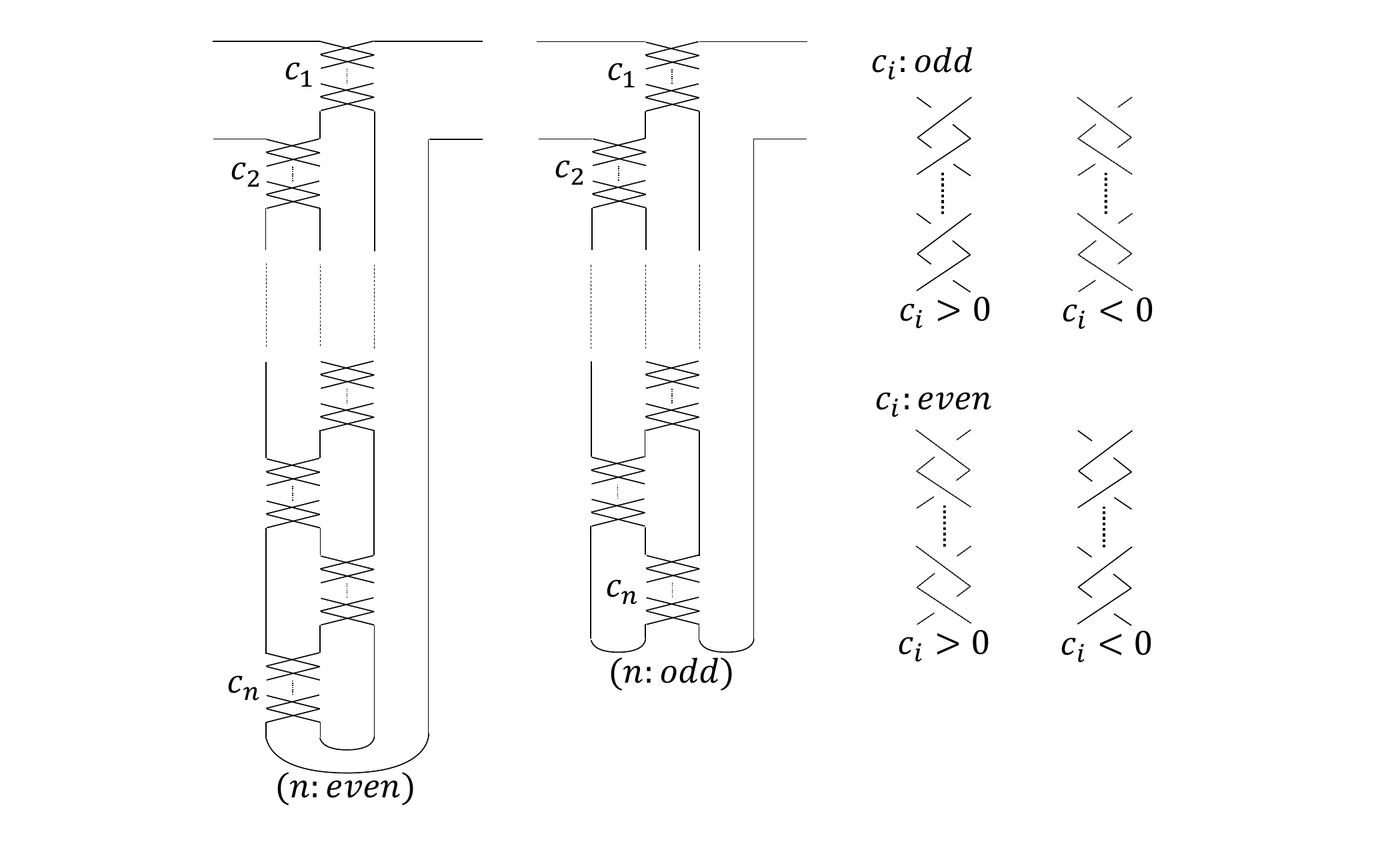}
    \caption{Another representation of a rational tangle used in this paper.}
    \label{fig:tangletwoc}
\end{figure}
\begin{claim}\label{claim:ABCDEF}
We consider six types of tangles as shown in Figure \ref{fig:tangleetc}: Type A, Type B, Type C, Type D, Type E, and Type F.

A rational number $\beta/\alpha$ admits a finite continued fraction expansion
\[
\frac{\beta}{\alpha} = \cfrac{1}{c_1 + \cfrac{1}{c_2 + \dots + \cfrac{1}{c_n}}},
\]
where $n$ is a positive integer, and $c_i \neq 0$ for $1 \leq i \leq n$.

Here we define
\begin{align*}
& V := \frac{1}{4} \sum_{j=1}^{n/2}
  ( \sum_{i=1}^{j} c_{2i-1} ) c_{2j} 
  \quad \text{for even $n$}, \\
& W := \frac{1}{4} \sum_{j=1}^{(n-1)/2}
  ( \sum_{i=1}^{j} c_{2i} ) c_{2j+1}
  \quad \text{for odd $n$}, \\
& X := \frac{1}{2} \sum_{i=1}^{(n+1)/2} c_{2i-1}
  \quad \text{for odd $n$}, \\
& Y := \frac{1}{2} \sum_{i=1}^{n/2} c_{2i}
  \quad \text{for even $n$}.
\end{align*}


We define $\beta'/\alpha'$ depending on the tangle types by
\[
\frac{\beta'}{\alpha'} : =
\begin{cases}
\displaystyle \frac{1}{ 0 + \frac{1}{2Y}}, & \text{if Type $A$},\\[1mm]
\displaystyle \frac{\beta}{\alpha}, & \text{if Type $B$},\\[1mm]
\displaystyle \frac{1}{2X}, & \text{if Type $C,D,E,$ and $F$}.\\
\end{cases}
\]

By the same argument as in the proof of Claim~\ref{claim:technique2} (see below), $T(\beta/\alpha)$ can be deformed into $T(\beta'/\alpha')$ by $\Delta$-moves.

\medskip
(1) \textbf{Type A.}  

A tangle $T(\beta/\alpha)$ is of Type A if all $c_i$ are positive even, and $n$ is even. 

$T(\beta/\alpha)$ can be deformed into $T(\beta'/\alpha')$ by $V$ times $\Delta$-moves.

By Proposition \ref{prop:even}, $u^{\Delta}(L \big( T(\beta / \alpha) \big)) = V$.

\medskip
(2) \textbf{Type B.}  

A tangle $T(\beta/\alpha)$ is of Type B if $c_1$ is positive odd, $c_i$ ($i \ge 2$) are positive even, and $n$ is even. 


By Proposition \ref{prop:oddnew}(1), $u^{\Delta}(L \big( T(\beta / \alpha) \big)) = V + \frac{1}{2}Y^2$.

\medskip
(3) \textbf{Type C.}  

A tangle $T(\beta/\alpha)$ is of Type C if $c_1$ is positive odd, $c_i$ ($i \ge 2$) are positive even, and $n$ is odd. 

$T(\beta/\alpha)$ can be deformed into $T(\beta'/\alpha')$ by $W$ times $\Delta$-moves.

By Proposition \ref{prop:oddnew}(2), $u^{\Delta}(L \big( T(\beta / \alpha) \big)) = W + \frac{1}{2}X^2 - \frac{1}{8}$.

\medskip
(4) \textbf{Type D.}  

A tangle $T(\beta/\alpha)$ is of Type D if $c_1$ is positive odd, $c_{2i-1}$ ($i \ge 2$) are positive even, $c_{2i}$ ($i \ge 1$) are negative even, and $n$ is odd. 

$T(\beta/\alpha)$ can be deformed into $T(\beta'/\alpha')$ by $-W$ times $\Delta$-moves.

\medskip
(5) \textbf{Type E.} 

A tangle $T(\beta/\alpha)$ is of Type E if all $c_i$ are positive even, and $n$ is odd. 

$T(\beta/\alpha)$ can be deformed into $T(\beta'/\alpha')$ by $W$ times $\Delta$-moves.

\medskip
(6) \textbf{Type F.} 

A tangle $T(\beta/\alpha)$ is of Type F if $c_{2i-1}$ are negative even, $c_{2i}$ are positive even, and $n$ is odd. 

$T(\beta/\alpha)$ can be deformed into $T(\beta'/\alpha')$ by $-W$ times $\Delta$-moves.
\end{claim}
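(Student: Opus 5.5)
The claim has two kinds of assertions. The first kind are the formulas $u^{\Delta}(L(T(\beta/\alpha)))$ for Types A, B, C. The second kind is the existence of a sequence of $V$, $W$, or $-W$ $\Delta$-moves deforming $T(\beta/\alpha)$ into $T(\beta'/\alpha')$.

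\textbf{The closure formulas.} I would read these off directly from the two-bridge results. For the presentation in Figure~\ref{fig:tangletwoc}, the left/right closure of $T(\beta/\alpha)$ with continued fraction $[c_1,\dots,c_n]$ is the two-bridge knot $C(c_1,\dots,c_n)$, with the sign conventions of Section~\ref{subsec21}. I would check this convention on a single example such as $T(2/5)$.
\begin{itemize}
\item Type A is then exactly Proposition~\ref{prop:even}, which gives $u^{\Delta}=V$.
\item Type B is Proposition~\ref{prop:oddnew}(1). Writing $\sum_{i} c_{2i}=2Y$, the term $\frac18(2Y)^2$ equals $\frac12 Y^2$.
\item Type C is Proposition~\ref{prop:oddnew}(2). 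Here the first sum is by definition $W$, and $\frac18\{(2X)^2-1\}=\frac12X^2-\frac18$.
\end{itemize}

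\textbf{The deformations.} I would work inside the tangle rather than in the closure, so that the moves make sense for every Montesinos closure. Take the innermost pair of twist regions, $c_{n-1}$ and $c_n$, with $n$ odd (Types C--F). Two even twist regions $c_{2j}$ and $c_{2j+1}$ sit side by side, so each full twist of one band wraps the two strands of the other band. One $\Delta$-move (a clasp-pass) can then change the twisting of the $c_{2j+1}$ region by $\pm 2$ half-twists relative to the $c_{2j}$ clasp. Repeating this $\frac14(\sum_{i\le j}c_{2i})c_{2j+1}$ times cancels the $c_{2j+1}$ region against all the even-indexed twists accumulated so far. In the end, all even-indexed regions are unlinked from the odd ones and can be removed by isotopy, while the odd-indexed twists combine into the single twist region $\sum c_{2i-1}=2X$. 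This yields $T(1/2X)$.

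The count is precisely $W$. When the signs of $c_{2i}$ and $c_{2i+1}$ are opposite (Types D and F), each summand of $W$ is negative, so the number of moves is $-W$. In all cases the count is $|W|$. For Type A, the same procedure with $n$ even pairs each $c_{2j}$ with the preceding odd entries. It uses $V$ moves and leaves the integer-plus-twist form $1/(0+1/2Y)$. For Type B we take $\beta'/\alpha'=\beta/\alpha$, so no moves are needed.

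\textbf{Main obstacle.} I would organise the deformation as an induction on $n$. The bookkeeping would follow \cite{Naka3}, where precisely this counting produced the upper bounds in Propositions~\ref{prop:even} and \ref{prop:oddnew}. What must be verified is that the moves used there are supported inside the tangle ball, i.e. they never pass through the closing arcs. The hardest part is making this local version of the clasp-pass lemma precise, including the sign and orientation of the clasps. The orientation of the strands inside a tangle depends on the other tangles in the Montesinos knot, so I would check that a $\Delta$-move is valid for either orientation. This holds because the $\Delta$-move is symmetric under reversing all strands. I would also confirm that the number of moves depends only on the half-twist counts, not on the orientation.
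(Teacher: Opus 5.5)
\textbf{Verdict: genuine gap in the deformation step.} Your derivation of the closure formulas for Types A, B, C is exactly the paper's. Your overall plan for the deformations is also the paper's: rerun the argument of Claim~\ref{claim:technique2} inside the tangle, starting from the innermost end $c_n$.

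\textbf{Where it fails.} The step you actually describe reverses the roles of the odd- and even-indexed twist regions, and as written it does not work. You cancel the $c_{2j+1}$ region against the accumulated even twists $c_2+\cdots+c_{2j}$, at a cost of $\frac14(\sum_{i\le j}c_{2i})c_{2j+1}$.
\begin{itemize}
\item Even twists can only accumulate into a single region if the odd regions between them have already been removed. That contradicts your own conclusion that the odd twists survive and combine into $2X$.
\item A crossing change in an odd-indexed region costs far more than you claim. Take the Type~C tangle with $(c_1,c_2,c_3)=(1,2,2)$. Its closure $C(1,2,2)$ is $5_2$, with $a_2=2$. Cancelling $c_3$ gives $C(1,2,0)=C(1)$, the unknot. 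By Proposition~\ref{prop:a2} this needs at least two $\Delta$-moves, not $\frac14c_2c_3=1$.
\item Your statement that one clasp-pass changes the $c_{2j+1}$ twisting by $\pm2$ half-twists is also inconsistent with your count: that rate would need only $\frac12|c_{2j+1}|$ moves.
\end{itemize}

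\textbf{The correct step.} Use Claim~\ref{claim:technique} at the innermost end. One crossing change in the region next to the closed-up innermost region, $c_{n-1}\to c_{n-1}\mp 2$, costs $\frac12|c_n|$ $\Delta$-moves.
\begin{itemize}
\item For $n$ odd, you kill the \emph{even}-indexed regions $c_{n-1},c_{n-3},\dots,c_2$ in that order.
\item Each time a region reaches $0$, its two neighbours merge. So when you reach $c_{2i}$, the innermost region is $c_{2i+1}+c_{2i+3}+\cdots+c_n$, and each crossing change in $c_{2i}$ costs half its absolute value.
\item The total is $\frac14\sum_i|c_{2i}|\,|c_{2i+1}+\cdots+c_n|$. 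After exchanging the order of summation this is $W$ for Types C and E, and $-W$ for Types D and F, where all odd-indexed entries share one sign and all even-indexed entries the other.
\item The odd regions merge into $2X$, giving $T(1/2X)$.
\end{itemize}
For Type A the roles swap. You kill the odd regions $c_{n-1},c_{n-3},\dots,c_1$, and each crossing change in $c_{2i-1}$ costs $\frac12(c_{2i}+c_{2i+2}+\cdots+c_n)$. The even regions merge into $2Y$, giving $C(0,2Y)$ at cost $V$.

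Your sum $\frac14(\sum_{i\le j}c_{2i})c_{2j+1}$ is the same double sum, grouped by the odd index instead. That grouping, however, does not correspond to any valid order of moves.

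Always consuming the region adjacent to the already closed-up inner end is also what keeps every move inside the tangle ball. This resolves the locality concern you raised.
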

\begin{figure}[htbp]
    \centering
    \includegraphics[width=1\linewidth]{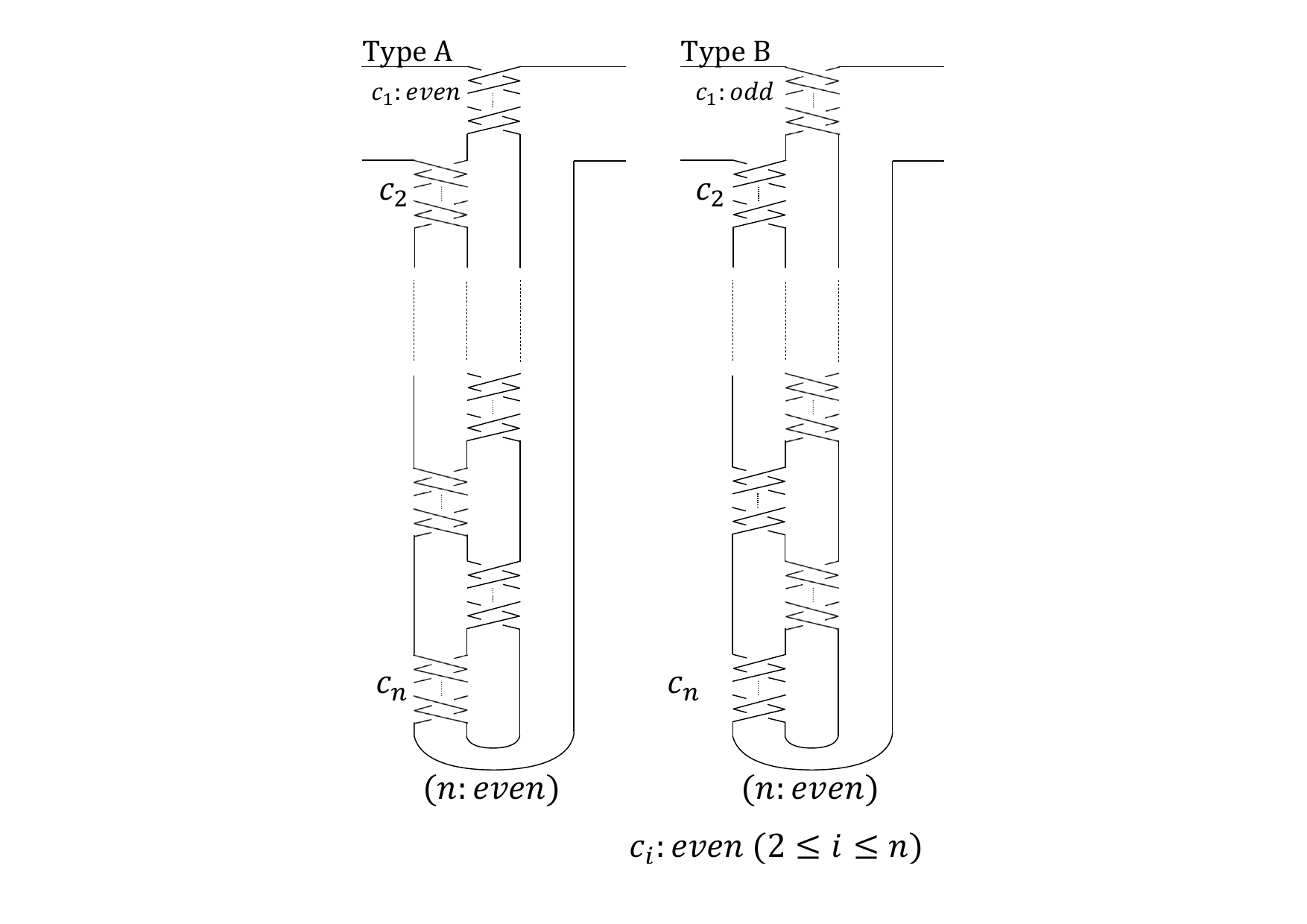}
    \caption{Tangles of Type A and B.}
    \label{fig:tangleetc}
\end{figure}
\begin{figure}[htbp]
    \centering
    \includegraphics[width=1\linewidth]{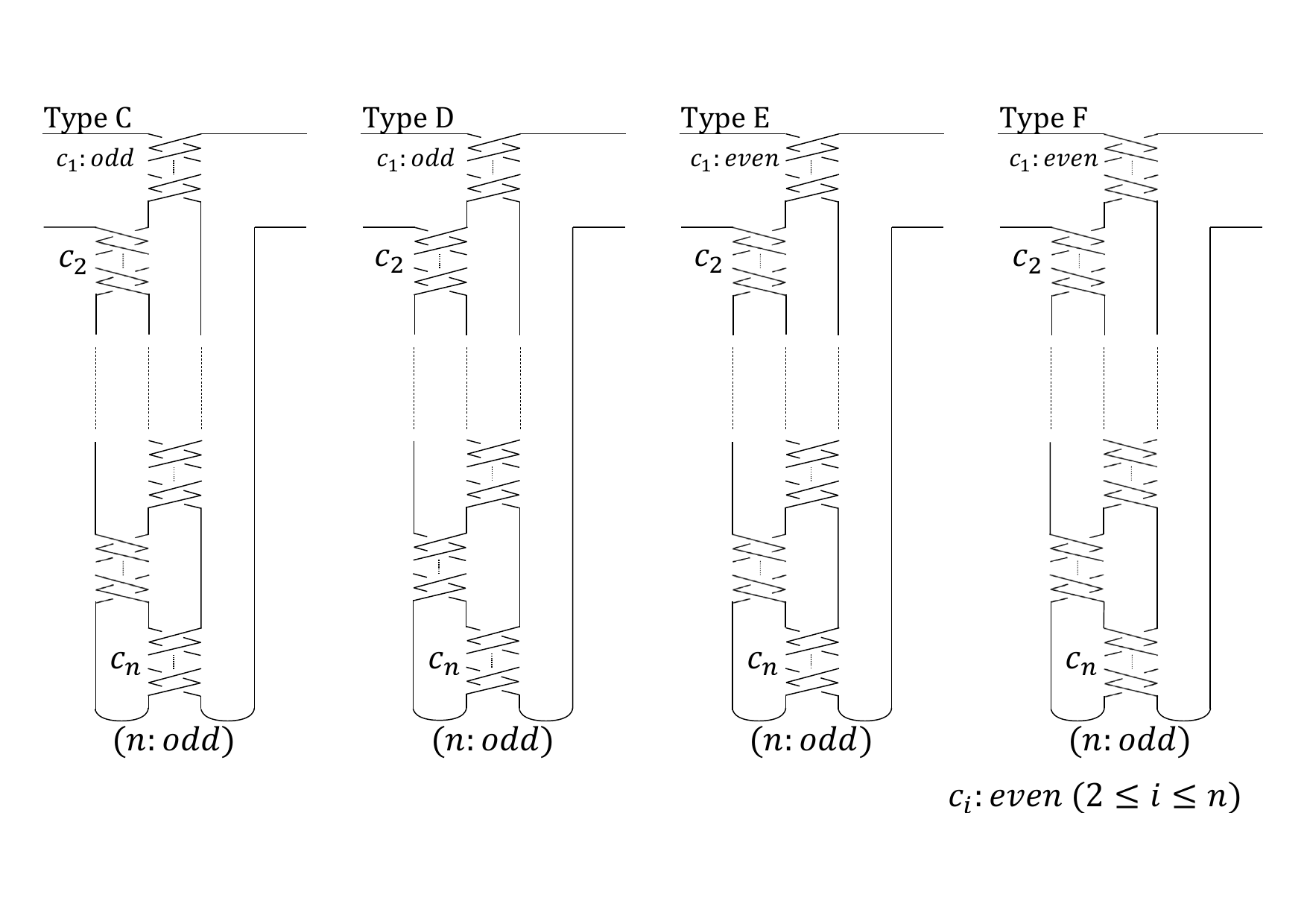}
    \caption{Tangles of Type C--F.}
    \label{fig:tangleetc}
\end{figure}
\begin{figure}[htbp]
    \centering
    \includegraphics[width=1\linewidth]{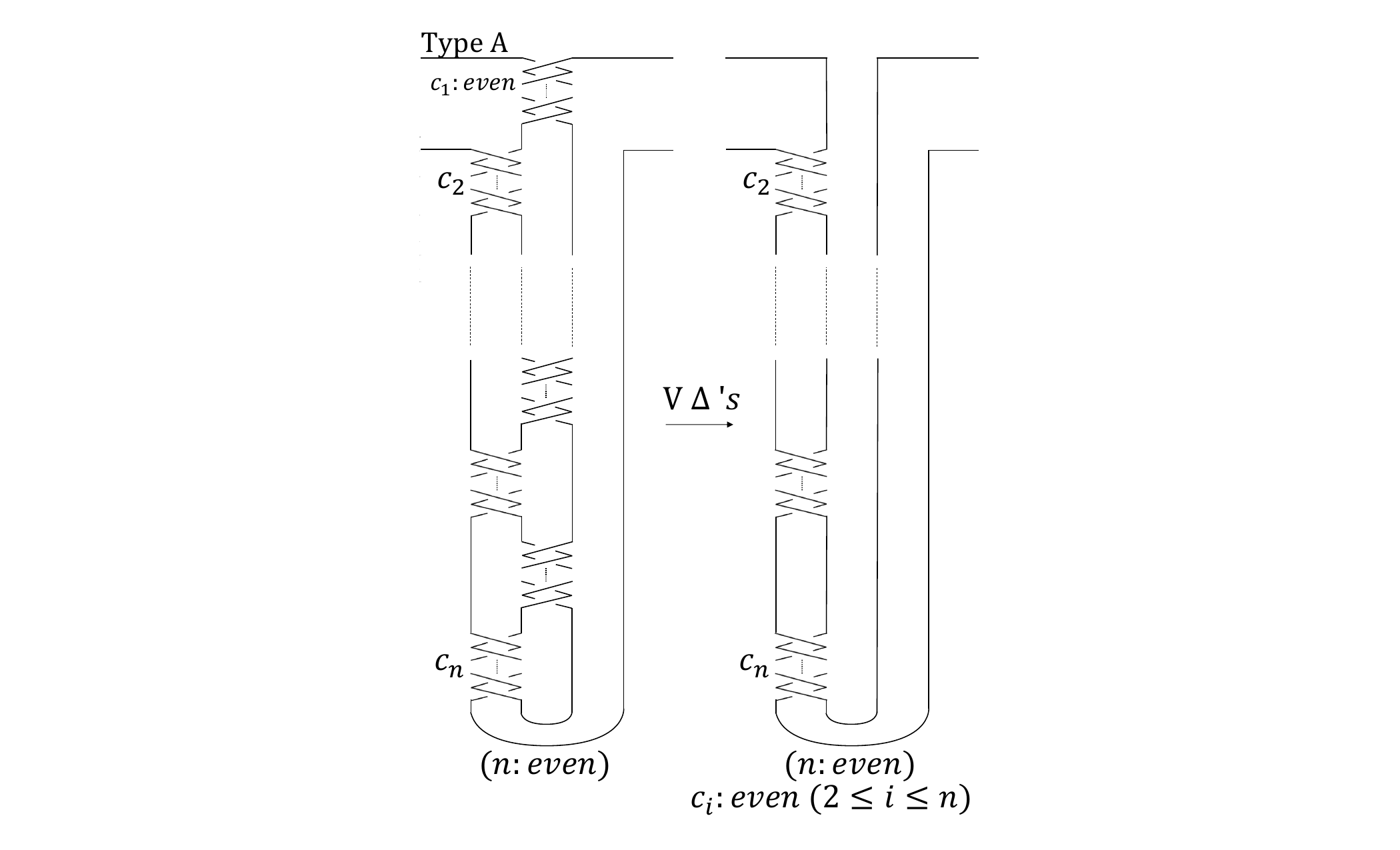}
    \caption{A deformation of a tangle of Type A.}
    \label{fig:tangleAto}
\end{figure}
\begin{figure}[htbp]
    \centering
    \includegraphics[width=1\linewidth]{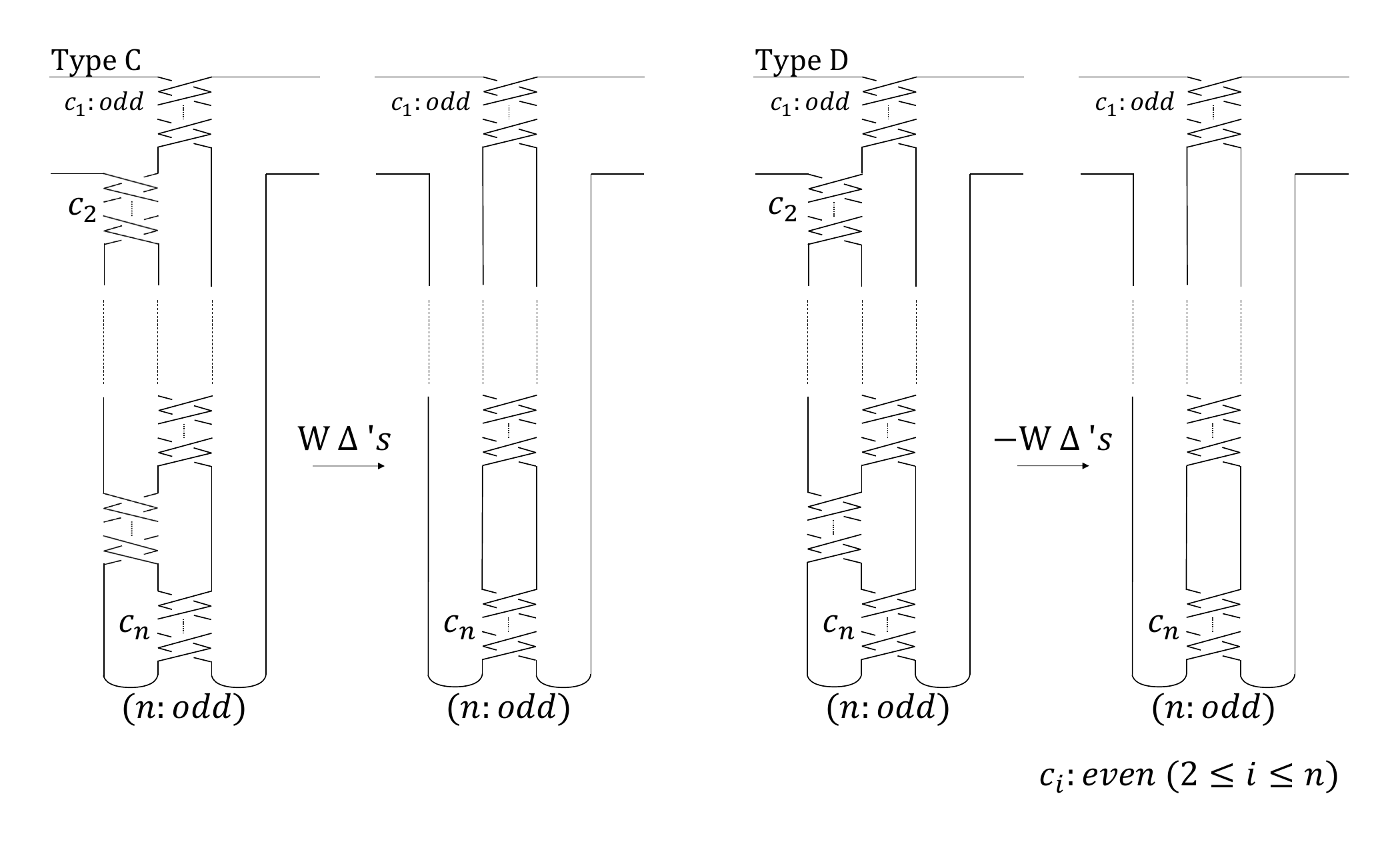}
    \caption{Deformations of tangles of Type C and D.}
    \label{fig:tangleCto}
\end{figure}
\begin{figure}[htbp]
    \centering
    \includegraphics[width=1\linewidth]{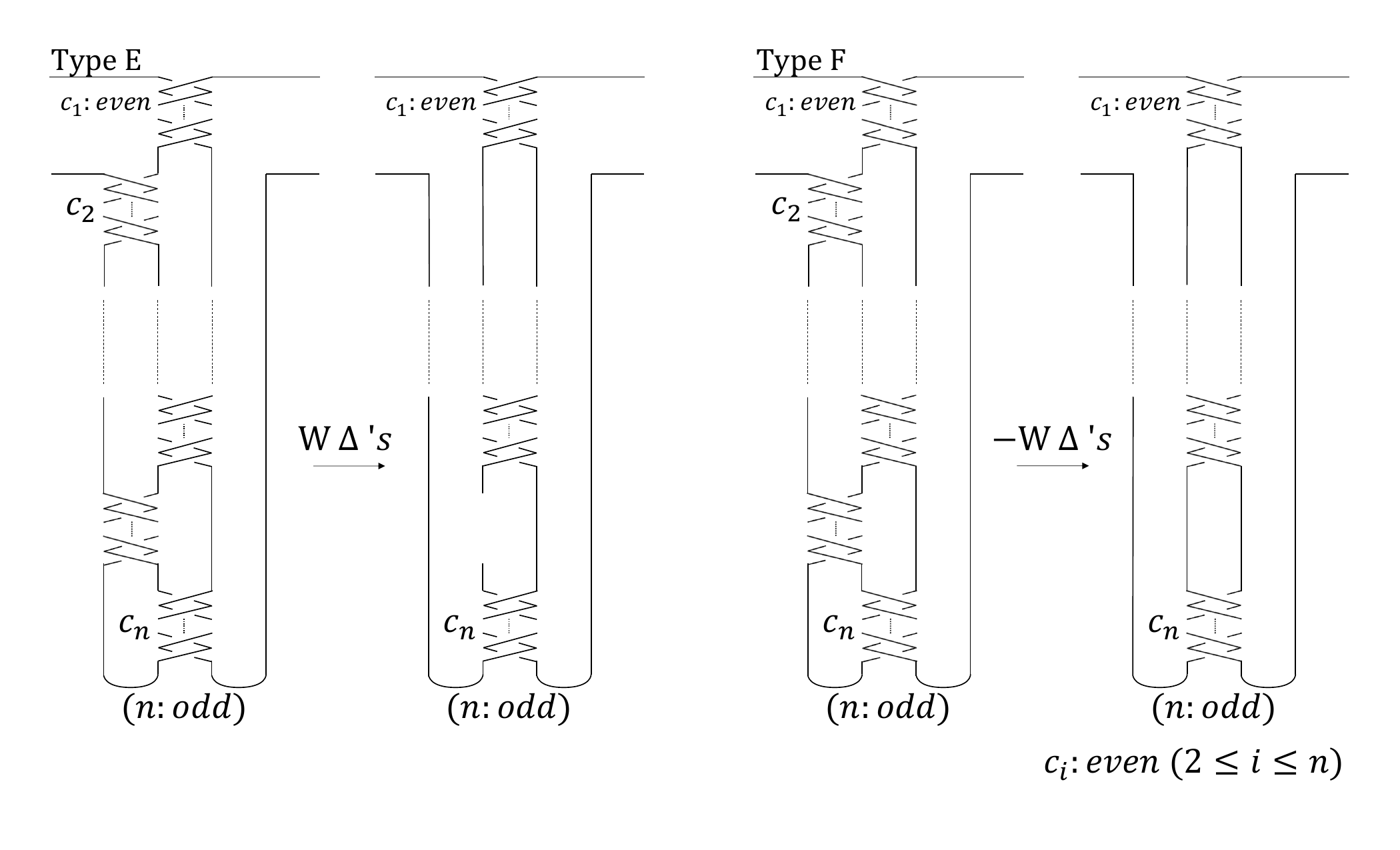}
    \caption{Deformations of tangles of Type E and F.}
    \label{fig:tangleEto}
\end{figure}
Table~\ref{tab:tangle-types} lists the characteristics of Type A--F tangles. 
The main differences lie in the parity and sign of $c_1$ and the signs of the remaining $c_i$. 

\begin{table}[h]
\centering
\caption{Summary of Type A--F tangles with exact conditions}
\label{tab:tangle-types}
\begin{tabular}{c c c c c c c}
\hline
Type & $c_1$ & $c_{2i-1}$ ($i\ge 2$) & $c_{2i}$ ($i\ge 1$) & $n$ & $\beta'/\alpha'$ & $\Delta$-move \\
\hline
A & positive even & positive even & positive even & even & $2Y$ & $+V$ \\
B & negative odd & negative even & negative even & even & $-$ & $-$ \\
C & positive odd & positive even & positive even & odd & $1/2X$ & $+W$ \\
D & positive odd & positive even & negative even & odd & $1/2X$ & $-W$ \\
E & positive even & positive even & positive even & odd & $1/2X$ & $+W$ \\
F & negative even & negative even & positive even & odd & $1/2X$ & $-W$ \\
\hline
\end{tabular}
\end{table}

\vspace{1em}
The upper left and lower left corners of $T(\beta / \alpha)$ are connected by a strand if and only if $\alpha$ is even.
Hence we see that if $K = M\big(0 ; (\alpha_1, \beta_1), (\alpha_2, \beta_2), \ldots, (\alpha_r, \beta_r)\big)$
is to be a knot rather than a link, at most one of $\alpha_1, \alpha_2$, ... , $\alpha_r$ can be even.
Since a cyclic permutation indices does not change the knot type, we hereafter assume that $\alpha_2$, ..., $\alpha_r$ are odd.
With this convention, we say that $K$ is of odd type if $\alpha_1$ is odd, and of even type if $\alpha_1$ is even.

Let $T(\beta_1 / \alpha_1), \dots, T(\beta_r / \alpha_r)$ be tangles, and let $L$ be the associated Montesinos link. 
As one example, if $L$ is a knot, it is of odd type when all tangles are of Type A--D, 
and it is of even type when $T(\beta_1 / \alpha_1)$ is of Type E or F and the remaining tangles are of Type A--D (see Figure \ref{fig:montetype}). 
Other arrangements of tangles can also yield Montesinos knots of odd or even type. 
Whether $L$ is a knot depends on the number of tangles of each type.
\begin{figure}[htbp]
    \centering
    \includegraphics[width=1\linewidth]{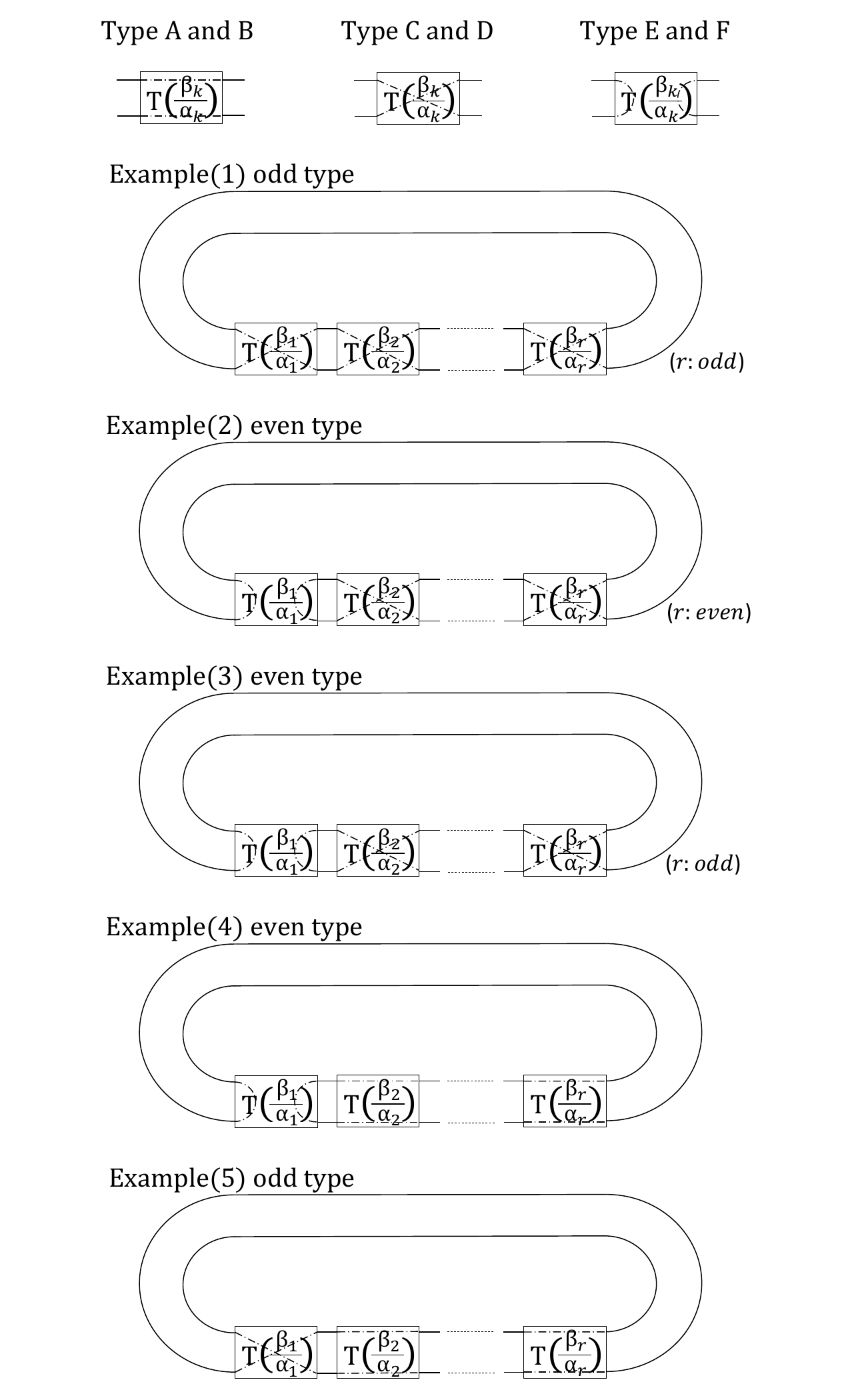}
    \caption{Montesinos knots of odd and even types.}
    \label{fig:montetype}
\end{figure}
\subsection{Techniques and Propositions} 
\label{subsec24}

\leavevmode

We use the technique in \cite{Naka3}.

\begin{claim}[\cite{Naka3}]\label{claim:technique}
Let $K=C(c_1, c_2, ... , c_n)$ be a two-bridge knot, where $c_1$ is a positive even integer and $c_2$ is a positive integer.
In Figure \ref{fig:cct}, if we perform $\frac{1}{2} c_1$ times $\Delta$-moves, we can exchange the crossing labeled with the asterisk *.
Then, we have 
$d_G^{\Delta}\big(C(c_1, \underline{c_2}, c_3, ... , c_n), C(c_1, \underline{c_2-2}, c_3, ... , c_n)\big) \leq \frac{1}{2} c_1$.
\end{claim}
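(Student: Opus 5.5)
The plan is to exhibit an explicit sequence of $\tfrac12 c_1$ $\Delta$-moves that changes the crossing marked $*$ in Figure~\ref{fig:cct}. The approach is the standard ``pass a strand through a twist region'' argument: a crossing change between two strands can be traded for $\Delta$-moves whenever the two strands are separated by a full-twist region whose linking contribution is controlled.

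\textbf{Setting up the picture.} First isolate the local picture. The twist box $c_1$ consists of $\tfrac12 c_1$ full twists between two strands. The crossing $*$ is a crossing of the box $c_2$, adjacent to the $c_1$ box. Since $c_1$ is even, the two strands of the $c_1$ box are arranged so that the strand through $*$ runs alongside the box $c_1$, say parallel to one of its strands. The orientations of these strands are then determined, and the crossings of the $c_1$ box all have the same sign.

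\textbf{Undoing $*$.} I would then slide the crossing $*$ along the $c_1$ twist region, one full twist at a time. The key local lemma is a variant of the well-known fact that a crossing change between a strand and a full twist can be realized by a single $\Delta$-move, i.e. the Murakami--Nakanishi ``clasp-pass'' picture. Passing an arc through one full twist of two parallel strands alters the diagram by exactly one $\Delta$-move (three strands forming a triangle). So I would move the under-arc at $*$ across each of the $\tfrac12 c_1$ full twists, using one $\Delta$-move per full twist. After all of them the arc has gone around the closure of the $c_1$ box, and the crossing at $*$ appears with opposite over/under information, up to planar isotopy. This gives the exchange of $*$ with $\tfrac12 c_1$ moves.

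\textbf{Conclusion.} Exchanging one crossing of the $c_2$ twist region reduces the number of half-twists there by $2$ (two adjacent crossings cancel by a Reidemeister II move), so the diagram becomes $C(c_1, c_2-2, c_3,\dots,c_n)$. Hence $d_G^{\Delta}\le \tfrac12 c_1$.

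\textbf{Main obstacle.} The hard step is verifying that each full twist costs exactly one $\Delta$-move with compatible orientations; otherwise one might need Reidemeister moves that reverse the needed triangle. I would check this first by drawing the $c_1=2$ case explicitly and then inducting on $c_1/2$, peeling off one full twist at a time. As a consistency check, I would compare with the change in $a_2$. For $n=2$, Proposition~\ref{prop:even} gives $a_2$ difference $\tfrac14 c_1\cdot 2=\tfrac12 c_1$, which shows the bound cannot be improved.
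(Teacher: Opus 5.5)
Your overall plan is the same as the paper's: realize the switch of $*$ by $\frac12 c_1$ local $\Delta$-moves, one for each full twist of the $c_1$ box. The paper simply exhibits these moves in Figure~\ref{fig:cct}. As written, however, the step that does the work is not established, and the way you close the argument is wrong.

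\textbf{The local lemma.} The fact you invoke is not a standard one. The clasp-pass move is a different (finer) local move, not the $\Delta$-move, and ``a crossing change between a strand and a full twist'' has no precise meaning here. What is true is the following. An arc crossing an alternating full twist, over one strand and under the other, can be pushed across it by one $\Delta$-move and one Reidemeister~III move, because the over-strand of the twist switches between its two crossings. That alone says nothing about the crossing $*$.

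\textbf{The final step.} No planar isotopy changes over/under information. Moreover, a $\Delta$-move never changes which strand is over at any crossing it involves, so your moves never switch $*$ itself. Hence ``the crossing at $*$ appears with opposite over/under information'' cannot be the mechanism. You have not shown that the diagram you end with is $C(c_1,c_2-2,c_3,\dots,c_n)$.

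\textbf{The missing idea.} Consider the loop of the knot at $*$, running from $*$ through the $c_1$ box and back to $*$. It is threaded exactly $\frac12 c_1$ times by the other strand of the $c_1$ box, once per full twist; this is why smoothing at $*$ gives linking number $\pm\frac12 c_1$. Let $D$ be the diagram and $D^*$ the same diagram with $*$ switched.
\begin{enumerate}
\item By an isotopy supported away from $*$, slide one threading along a disk spanning the loop until its two crossings with the loop, together with $*$, bound a triangle.
\item Push the threading across $*$, doing exactly the same in $D$ and in $D^*$.
\item The two diagrams differ only at one vertex of this triangle. So its over/under data are cyclic in one diagram and acyclic in the other. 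The push is therefore one $\Delta$-move in one diagram and a Reidemeister~III move in the other.
\item After all $\frac12 c_1$ threadings are pushed out, the loop at $*$ bounds a disk disjoint from the rest of the knot. Thus $*$ is nugatory, and the two resulting diagrams are isotopic.
\end{enumerate}
Hence $d_G^{\Delta}(D,D^*)\le \frac12 c_1$. As you say, $D^*$ becomes $C(c_1,c_2-2,c_3,\dots,c_n)$ after a Reidemeister~II move. Your lower-bound check via $a_2$ is fine.
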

\begin{figure}[htbp]
 \centering
 \includegraphics[width=0.8\linewidth]{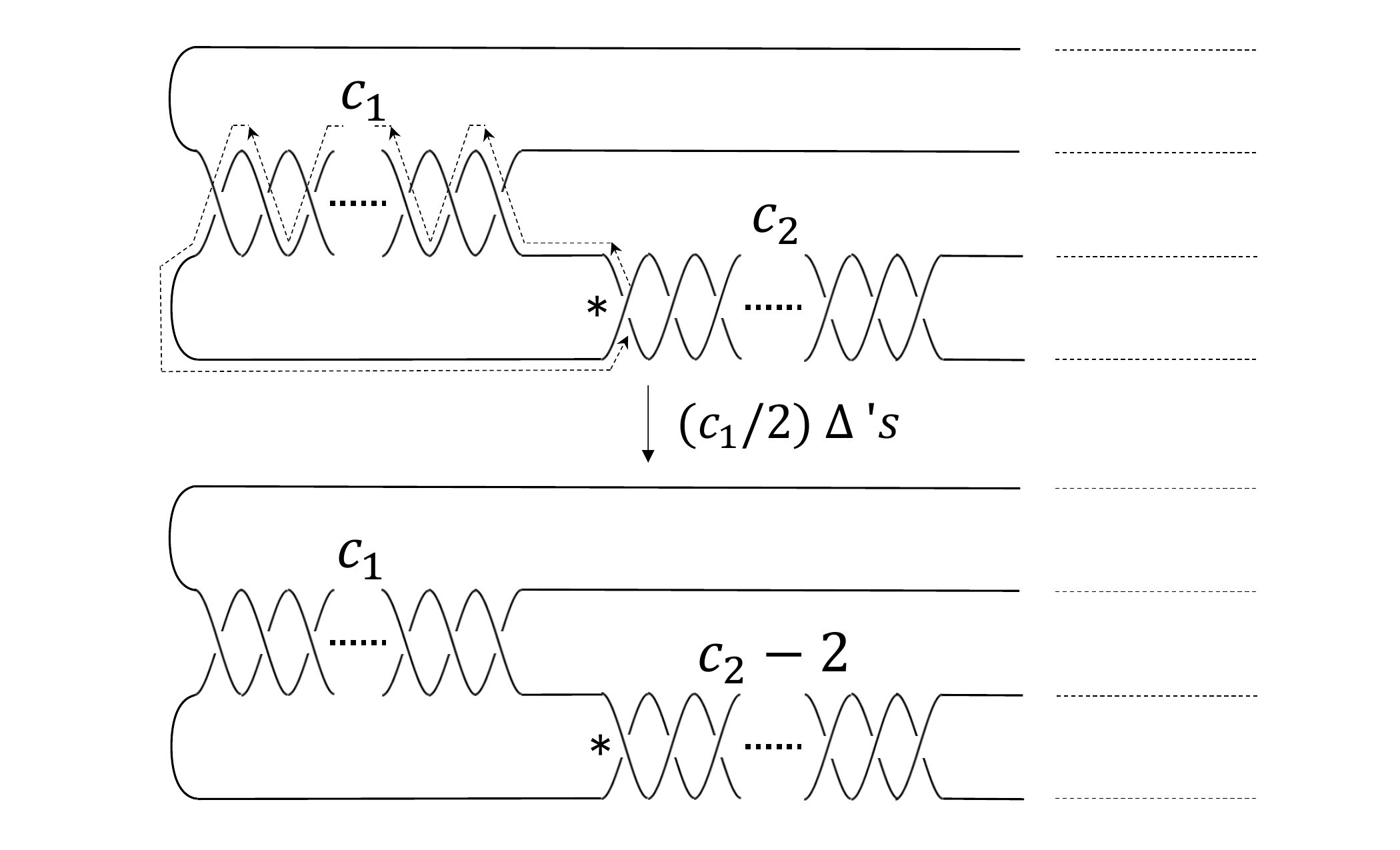}
 \caption{A crossing change realized by $\frac{1}{2} c_1$ $\Delta$-moves.}
 \label{fig:cct}
\end{figure}
By performing the same computation, we obtain Claim \ref{claim:technique2}.

\begin{claim}\label{claim:technique2}
Let $K=C(c_1, c_2, ... , c_n)$ be a two-bridge knot, where $c_1$ is a positive integer and $c_i$ is a positive even integer for $2 \leq i \leq n$. Then, we have

\begin{enumerate}
    \item If $n$ is even and $c_1$ is even, then
    \[
    d_G^{\Delta}\big(C(c_1, c_2, c_3, ... , c_n), C(0, c_2 + c_4 + ... + c_n)\big) 
    \leq \frac{1}{4} \sum_{j=1}^{n/2}
    (\sum_{i=1}^{j} c_{2i-1} ) c_{2j}
    = V.
    \]
    \item If $n$ is odd and $c_1$ is odd, then
    \[
    d_G^{\Delta}\big(C(c_1, c_2, c_3, ... , c_n), C(c_1 + c_3 + ... + c_n)\big)
    \leq \frac{1}{4} \sum_{j=1}^{(n-1)/2}
    ( \sum_{i=1}^{j} c_{2i} ) c_{2j+1}
    = W.
    \]
\end{enumerate}
\end{claim}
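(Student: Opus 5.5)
Claim~\ref{claim:technique2} is proved by iterating Claim~\ref{claim:technique} from left to right along the Conway normal form, counting the $\Delta$-moves, and checking that the totals give $V$ and $W$.

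\textbf{Case (1): $n$ even, $c_1$ even.} Claim~\ref{claim:technique} requires the first entry to be positive even. Applying it $c_2/2$ times lowers $c_2$ to $0$, at a cost of $\frac{1}{2}c_1$ each time. So
\[
d_G^{\Delta}\big(C(c_1,c_2,c_3,\dots,c_n),\,C(c_1,0,c_3,\dots,c_n)\big)\le \tfrac14 c_1c_2 .
\]
A zero entry can be absorbed by the standard move $C(\dots,a,0,b,\dots)=C(\dots,a+b,\dots)$, since two twist regions separated by an empty region merge. This gives $C(c_1+c_3,c_4,\dots,c_n)$, whose first entry is again positive even. Inductively, at stage $j$ the first entry is $\sum_{i\le j}c_{2i-1}$, and killing $c_{2j}$ costs $\frac14\big(\sum_{i\le j}c_{2i-1}\big)c_{2j}$. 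Summing over $j$ gives $V$.

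The endpoint needs more care: literally, this procedure ends at $C(c_1+c_3+\cdots+c_{n-1},0)$. I would instead run the symmetric procedure that kills the odd-indexed entries while the even-indexed ones accumulate, giving $C(0,c_2+\cdots+c_n)$. One has to check that the analogue of Claim~\ref{claim:technique} holds with the roles of odd and even positions exchanged, by rotating the diagram of Figure~\ref{fig:cct}, and that the cost count is unchanged. The cost is unchanged because it is always (half the accumulated sum) times (half the entry being reduced), and the double sum $\frac14\sum_{j}\big(\sum_{i\le j}c_{2i-1}\big)c_{2j}$ counts exactly the pairs $(2i-1,2j)$ with $i\le j$. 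This quantity is symmetric under reading the elimination in either order.

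\textbf{Case (2): $n$ odd, $c_1$ odd.} Here $c_1$ is odd, so Claim~\ref{claim:technique} cannot be applied with $c_1$ as the pivot. Instead I use the even entry $c_2$ as the pivot. Viewing the normal form from the other side, or working inside the subdiagram $C(c_2,c_3,\dots)$, each crossing change in the $c_3$ twist region costs $\frac12 c_2$ $\Delta$-moves. Reducing $c_3$ to $0$ therefore costs $\frac14 c_2c_3$. Merging gives $C(c_1,c_2+c_4,c_5,\dots,c_n)$, and inductively the total cost is $\frac14\sum_j\big(\sum_{i\le j}c_{2i}\big)c_{2j+1}=W$. We arrive at $C(c_1,c_2+\cdots+c_{n-1},0)$, which equals $C(c_1,\dots)$ after absorbing the trailing zero.

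This endpoint is not the stated one, so I would again reverse roles: use the odd-position accumulated sum as the pivot and kill the even-position entries. That would end at $C(c_1+c_3+\cdots+c_n)$ with the same double-sum count.

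\textbf{Main obstacle.} The delicate step is justifying that the pivot may sit at a position other than the first, possibly with odd accumulated twisting. Claim~\ref{claim:technique} is stated only for the first entry being positive even. So I would reprove its local picture, Figure~\ref{fig:cct}, for a general pair of adjacent twist regions inside the normal form, with the even region acting as the pivot. I would also check that the orientations needed for the $\Delta$-move realization are compatible, since the cost $\frac12(\text{even count})$ depends on the parallel or antiparallel orientation of the strands. This orientation check is where the parity hypotheses (all $c_i$ even for $i\ge 2$) enter.
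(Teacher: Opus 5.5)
Your final procedures, read right to left (the stated endpoints force this direction), are the paper's proof. You eliminate the entries of one parity, with the accumulated sum $c_n$, $c_{n-2}+c_n,\dots$ as pivot, and your count giving $V$ and $W$ is right. But on the way you assert a false step, and the ``main obstacle'' you plan to overcome is a false lemma.

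In case (2) you claim that, with the interior entry $c_2$ as pivot, each crossing change in the $c_3$ region costs $\frac{1}{2}c_2$ $\Delta$-moves. This fails.
\begin{itemize}
\item By Proposition~\ref{prop:oddnew}(2), the single crossing change $C(c_1,c_2,c_3)\to C(c_1,c_2,c_3-2)$ changes $a_2$ by $\frac{1}{2}(c_1+c_2+c_3-1)$, which is strictly larger than $\frac{1}{2}c_2$.
\item So by Proposition~\ref{prop:a2} that crossing change needs more than $\frac{1}{2}c_2$ $\Delta$-moves.
\item Concretely, for $K=C(1,2,2)=5_2$ your procedure reaches $C(1,2,0)=C(1)$, the trivial knot, after $W=1$ move. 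Yet $u^{\Delta}(K)\ge |a_2(K)|=2$.
\end{itemize}
The point is that $c_2$ is not at an end of the normal form. You discarded that procedure only because its endpoint looked wrong, not because it was invalid. A version of Claim~\ref{claim:technique} ``for a general pair of adjacent twist regions inside the normal form'', possibly with odd pivot, is false, so it cannot be proved.

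What the argument needs is much weaker.
\begin{itemize}
\item In the right-to-left procedure the pivot is always the \emph{last} entry of the current normal form, because each merge $C(\dots,a,0,b)=C(\dots,a+b)$ leaves the accumulated sum at the end.
\item The pivot is always even, being a sum of $c_i$ with $i\ge 2$.
\item The odd $c_1$ of case (2) is never reduced; it only enters in the final merge $C(c_1,0,c_3+\dots+c_n)=C(c_1+c_3+\dots+c_n)$.
\item Reversing the continued fraction gives the same knot up to mirror image. Whether a mirror occurs depends only on the parity of the number of entries, which a single crossing change does not alter.
\end{itemize}
Hence each step is literally Claim~\ref{claim:technique} applied to the reversed normal form $C(c_n,\dots)$. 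No new local picture or orientation analysis is required; this is what the paper's ``same computation'' amounts to.

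Incidentally, your first, left-to-right procedure in case (1) is literally Claim~\ref{claim:technique} iterated, and it already proves the inequality as stated. Both $C(c_1+c_3+\dots+c_{n-1},0)$ and $C(0,c_2+\dots+c_n)$ have determinant $1$, so both are the trivial knot. The right-to-left direction matters later, in Claim~\ref{claim:ABCDEF}, where the specific tangle $T\big(1/(0+\frac{1}{2Y})\big)$ has to be reached.
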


\begin{proof}
(1)
Performing the same computation in Claim~\ref{claim:technique}, we have
\begin{align*}
d_G^{\Delta}\big(C(c_1, ... , c_{n-1}, c_n), 
C(c_1, ... , c_{n-1} - 2, c_n)\big) \leq \frac{1}{2}c_n. 
\end{align*}
By repeating the same computation $\frac{1}{2} c_{n-1}$ times, we have
\begin{gather*}
d_G^{\Delta}\big(C(c_1, ... , c_{n-2}, \underline{c_{n-1}}, c_n), C(c_1, ... , c_{n-2}, \underline{c_{n-1} - 2}, c_n)\big) 
\leq \frac{1}{2} c_n \quad, ... , \\
d_G^{\Delta}\big(C(c_1, ... , c_{n-2}, \underline{2}, c_n), C(c_1, ... , c_{n-2}, \underline{0}, c_n)\big) \leq \frac{1}{2} c_n . 
\end{gather*}
By continuing with another $\frac{1}{2} c_{n-3}$ steps, we have
\begin{gather*}
d_G^{\Delta}\big(C(c_1, ... , c_{n-4}, \underline{c_{n-3}}, c_{n-2} + c_n), C(c_1, ... , c_{n-4}, \underline{c_{n-3} - 2}, c_{n-2} + c_n)\big) \\
\leq \frac{1}{2} (c_{n-2} + c_n) \quad, ... , \\
d_G^{\Delta}\big(C(c_1, ... , c_{n-4}, \underline{2}, c_{n-2} + c_n), C(c_1, ... , c_{n-4}, \underline{0}, c_{n-2} + c_n)\big) \\
\leq \frac{1}{2} (c_{n-2} + c_n). 
\end{gather*}
Proceeding further, we have 
\begin{gather*}
d_G^{\Delta}\big(C(\underline{c_1}, c_2 + ... + c_{n-2} + c_n), C(\underline{c_{1} - 2}, c_2 + ... + c_{n-2} + c_n) \big) \\
\leq \frac{1}{2} (c_2 + ... + c_{n-2} + c_n) \quad, ... , \\
d_G^{\Delta}\big(C(\underline{2}, c_2 + ... + c_{n-2} + c_n), C(\underline{0}, c_2 + ... + c_{n-2} + c_n)\big)  \\
\leq \frac{1}{2} (c_2 + ... + c_{n-2} + c_n) .
\end{gather*}
By summing up, we obtain 
\begin{align*}
& d_G^{\Delta}\big(C(c_1, ..., c_{n-2}, c_{n-1}, c_n), C(0, c_2 + ... + c_{n-2} + c_n)\big) \\
& \leq d_G^{\Delta}\big(C(c_1, ..., c_{n-2}, c_{n-1}, c_n), C(c_1, ..., c_{n-2}, c_{n-1} - 2, c_n)\big) + ... \\
& + d_G^{\Delta}\big(C(2, c_2 + ... + c_{n-2} + c_n), C(0, c_2 + ... + c_{n-2} + c_n)\big) \\
& \leq \frac{1}{4} \{ c_1 (c_2 + c_4 +... + c_{n-2} + c_n) + ... + c_{n-3} (c_{n-2} + c_n) + c_{n-1} c_n \} \\
& = \frac{1}{4} \{ c_1 c_2 + (c_1 + c_3) c_4 + ... + (c_1 + c_3 +... + c_{n-3} + c_{n-1}) c_n \} \\
& = \frac{1}{4} \sum_{j=1}^{n/2}
(\sum_{i=1}^{j} c_{2i-1}) c_{2j} = V.
\end{align*}

(2)
Same way, by Claim \ref{claim:technique}, 
we have
\begin{align*}
d_G^{\Delta}\big(C(c_1, ... , c_{n-1}, c_n), 
C(c_1, ... , c_{n-1} - 2, c_n)\big) \leq \frac{1}{2}c_n. 
\end{align*}
By repeating the same computation $\frac{1}{2} c_{n-1}$ times, we have
\begin{gather*}
d_G^{\Delta}\big(C(c_1, ... , c_{n-2}, \underline{c_{n-1}}, c_n), C(c_1, ... , c_{n-2}, \underline{c_{n-1} - 2}, c_n)\big) 
\leq \frac{1}{2} c_n \quad, ... , \\
d_G^{\Delta}\big(C(c_1, ... , c_{n-2}, \underline{2}, c_n), C(c_1, ... , c_{n-2}, \underline{0}, c_n)\big) \leq \frac{1}{2} c_n . 
\end{gather*}
By continuing with another $\frac{1}{2} c_{n-3}$ steps, we have
\begin{gather*}
d_G^{\Delta}\big(C(c_1, ... , c_{n-4}, \underline{c_{n-3}}, c_{n-2} + c_n), C(c_1, ... , c_{n-4}, \underline{c_{n-3} - 2}, c_{n-2} + c_n)\big) \\
\leq \frac{1}{2} (c_{n-2} + c_n) \quad, ... , \\
d_G^{\Delta}\big(C(c_1, ... , c_{n-4}, \underline{2}, c_{n-2} + c_n), C(c_1, ... , c_{n-4}, \underline{0}, c_{n-2} + c_n)\big) \\
\leq \frac{1}{2} (c_{n-2} + c_n). 
\end{gather*}
Proceeding further, we have 
\begin{gather*}
d_G^{\Delta}\big(C(c_1, \underline{c_2}, c_3 + ... + c_{n-2} + c_n), C(c_1, \underline{c_{2} - 2}, c_2 + ... + c_{n-2} + c_n) \big) \\
\leq \frac{1}{2} (c_3 + ... + c_{n-2} + c_n) \quad, ... , \\
d_G^{\Delta}\big(C(c_1, \underline{2}, c_3 + ... + c_{n-2} + c_n), C(c_1, \underline{0}, c_3 + ... + c_{n-2} + c_n)\big)  \\
\leq \frac{1}{2} (c_3 + ... + c_{n-2} + c_n) .
\end{gather*}
By summing up, we obtain 
\begin{align*}
& d_G^{\Delta}\big(C(c_1, ..., c_{n-2}, c_{n-1}, c_n), C(c_1 + c_3 + ... + c_{n-2} + c_n)\big) \\
& \leq d_G^{\Delta}\big(C(c_1, ..., c_{n-2}, c_{n-1}, c_n), C(c_1, ..., c_{n-2}, c_{n-1} - 2, c_n)\big) + ... \\
& + d_G^{\Delta}\big(C(c_1, 2, c_3 + ... + c_{n-2} + c_n), C(c_1, 0, c_3 + ... + c_{n-2} + c_n)\big) \\
& \leq \frac{1}{4} \{ c_2 (c_3 + c_5 +... + c_{n-2} + c_n) + ... + c_{n-3} (c_{n-2} + c_n) + c_{n-1} c_n \} \\
& = \frac{1}{4} \{ c_2 c_3 + (c_2 + c_4) c_5 + ... + (c_2 + c_4 +... + c_{n-3} + c_{n-1}) c_n \} \\
& = \frac{1}{4}
\sum_{j=1}^{(n-1)/2}
(\sum_{i=1}^{j} c_{2i}) c_{2j+1} = W.
\end{align*}
\end{proof}

In this paper, we use Propositions \ref{prop:lk}, \ref{prop:a2}.

\begin{proposition}\label{prop:lk}
Let $k_+$ be a knot, $k_-$ the knot obtained from $k_+$ by exchanging a positive crossing into a negative crossing, and $k_0$ a $2$-component link obtained from $k_+$ by smoothing at the crossing. Then $a_2(k_+)-a_2(k_-) = lk(k_0)$.
\end{proposition}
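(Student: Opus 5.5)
The plan is to derive the identity from the skein relation of the Conway polynomial,
\[
\nabla_{k_+}(z) - \nabla_{k_-}(z) = z\,\nabla_{k_0}(z),
\]
where $k_+, k_-, k_0$ are the Conway triple at the chosen crossing. Two standard facts about the shape of $\nabla$ will be used.

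\begin{itemize}
\item For a knot, $\nabla(z) = 1 + a_2 z^2 + a_4 z^4 + \cdots$. This is an even polynomial with constant term $1$.
\item For a $2$-component link, $\nabla(z) = a_1 z + a_3 z^3 + \cdots$. This is an odd polynomial with no constant term.
\end{itemize}

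Both facts follow by induction on the number of crossings and the unlinking number, using the skein relation and the normalizations $\nabla(O)=1$ and $\nabla(\text{split link})=0$. Substituting them into the skein relation and comparing the coefficients of $z^2$ gives
\[
a_2(k_+) - a_2(k_-) = a_1(k_0).
\]
The remaining task is therefore to show that $a_1(L) = lk(L)$ for every $2$-component link $L$.

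For this identification I argue by induction on the number of crossing changes between the two components of $L$ needed to reach a split link.

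\begin{itemize}
\item \textbf{Base case.} A split link has $\nabla = 0$. Hence $a_1 = 0 = lk$.
\item \textbf{Inductive step.} Apply the skein relation at a crossing between the two components. Write $L_+$ and $L_-$ for the two links and $L_0$ for the smoothing. Since the crossing joins the two components, $L_0$ is a knot, so $\nabla_{L_0}(0) = 1$.
\item Comparing the coefficients of $z$ in $\nabla_{L_+} - \nabla_{L_-} = z\nabla_{L_0}$ gives $a_1(L_+) - a_1(L_-) = 1$.
\item Directly from the crossing count, $lk(L_+) - lk(L_-) = 1$.
\end{itemize}

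So $a_1$ and $lk$ change in the same way under such crossing changes. Since they agree on split links, they agree on every $2$-component link. Every $2$-component link can be made split by finitely many inter-component crossing changes (for example, by making one component lie entirely above the other), so the induction terminates.

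The main point needing care is the bookkeeping of signs. The sign convention in the skein relation has to match the orientation convention for $k_+$ and $k_-$ used in Proposition~\ref{prop:lk}. Likewise, the definition of $lk$ as half the sum of the signs of inter-component crossings must give exactly $+1$ for the change from $L_-$ to $L_+$. Everything else is a direct comparison of coefficients.
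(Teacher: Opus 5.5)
Your argument is correct. The paper gives no proof of its own here and only refers to Kauffman, Chap.~III; your route, comparing $z^2$-coefficients in $\nabla_{k_+}-\nabla_{k_-}=z\nabla_{k_0}$ and then identifying $a_1$ with $lk$ by inter-component crossing changes down to a split link, is the standard skein-relation proof of this identity. You also do not need the parity facts (or an induction over all numbers of components): setting $z=0$ in the skein relation already shows that $\nabla(0)$ is unchanged by crossing changes, so $\nabla_{L_0}(0)=1$ for the knot $L_0$, and that is all your inductive step uses.
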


The proof can be found in \cite{Kau}(Chap. III).

\begin{proposition}[\cite{Oka1}]\label{prop:a2}
For any two knots $K$ and $K^{'}$, the difference $d_G^{\Delta}(K,K^{'}) - |a_2(K) - a_2(K^{'})|$ is a non-negative even integer. 
In particular, the difference $u^{\Delta}(K) -|a_2(K)|$ is also a non-negative even integer.
\end{proposition}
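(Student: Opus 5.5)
The last statement is Proposition~\ref{prop:a2}, which is Okada's result quoted from \cite{Oka1}, so the plan is to recall the standard argument rather than anything specific to this paper. The approach has two parts: first show that one $\Delta$-move changes $a_2$ by exactly $\pm 1$, then use parity and the triangle inequality.

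\emph{Step 1: one $\Delta$-move changes $a_2$ by $\pm1$.} Let $K$ and $K'$ differ by a single $\Delta$-move. I would realize the $\Delta$-move as a sequence of crossing changes and compute the change in $a_2$ with the skein relation of Proposition~\ref{prop:lk}.

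Concretely, a $\Delta$-move on three strands can be rewritten as follows. Perform a crossing change at one of the three crossings. Then apply Reidemeister moves. Then perform a second crossing change, of the opposite sign, at a crossing between the same pair of strands.

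Each crossing change contributes $\pm lk(k_0)$, where $k_0$ is the two-component link obtained by smoothing. The two smoothed links $k_0$ and $k_0'$ differ by a single crossing change between their two components. That crossing involves the third strand against one of the others, so $lk(k_0)-lk(k_0')=\pm1$.

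Hence $a_2(K)-a_2(K')=\pm1$. This local linking-number computation is the main obstacle, because orientations and signs must be tracked across the possible orientation patterns of the three strands. Up to symmetry there are two patterns: all three strands oriented cyclically, or not. I would check each case directly.

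\emph{Step 2: parity and the triangle inequality.} Let $d=d_G^{\Delta}(K,K')$ and choose a sequence of $d$ $\Delta$-moves from $K$ to $K'$. By Step 1, $a_2$ changes by $\pm1$ at each move. Write $p$ for the number of $+1$ changes and $q$ for the number of $-1$ changes, so that $p+q=d$ and $a_2(K')-a_2(K)=p-q$.

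Then $|a_2(K)-a_2(K')|\le d$, so the difference is non-negative. Moreover
\[
d-(p-q)=2q,
\]
so $d$ and $a_2(K)-a_2(K')$ have the same parity. Therefore $d_G^{\Delta}(K,K')-|a_2(K)-a_2(K')|$ is a non-negative even integer.

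Finally, take $K'=O$. Since $a_2(O)=0$, this gives the statement for $u^{\Delta}(K)-|a_2(K)|$.
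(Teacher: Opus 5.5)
The paper does not prove this statement; it simply quotes Okada's result. Your outline is the standard argument: one $\Delta$-move changes $a_2$ by exactly $\pm1$, and then a parity count finishes. Step~2 is correct as written. The decomposition in Step~1 is also right. If $X_1$ is the crossing of strands $a,c$, with sign $\epsilon$, then changing it, performing Reidemeister~III, and changing the resulting crossing $X_2$ back gives $a_2(K)-a_2(K')=\epsilon\bigl(lk(k_0)-lk(k_0')\bigr)$. Here $k_0,k_0'$ are the smoothings of the intermediate knot at $X_1$ and at $X_2$.

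The one inaccurate point is your justification of $lk(k_0)-lk(k_0')=\pm1$. It is not true in general that $k_0$ and $k_0'$ differ by a single crossing change between their components. The two arcs produced by smoothing the $a$--$c$ crossing lie in different components. In passing from $k_0$ to $k_0'$, the third strand $b$ must change its crossing with \emph{both} arcs, as follows.

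Suppose that at the chosen vertex one side of the triangle enters and the other leaves (a coherent corner). Then $b$ clasps one smoothing arc in $k_0$ and the other arc in $k_0'$. Otherwise, $b$ crosses each arc once, and both over/under relations are reversed.

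In either case, one of the two changes is a self-crossing change of the component containing $b$. Such a change can alter that component's knot type, so no single inter-component crossing change relates $k_0$ to $k_0'$.

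Your conclusion is nevertheless true, because self-crossings do not enter $lk$. The diagrams of $k_0$ and $k_0'$ coincide outside the local disk. Inside it, the inter-component crossings have the following total signs:
\begin{itemize}
\item[] in the incoherent case, $s$ in $k_0$ versus $-s$ in $k_0'$, with $s=\pm1$;
\item[] in the coherent case, $\pm2$ in one diagram versus $0$ in the other, since the clasp's two crossings have equal sign.
\end{itemize}
In both cases half the difference is $\pm1$. Replace the crossing-change sentence by this direct local count, and your argument is complete.
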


Since $a_2(K)=a_2(K^*)$ and $u^{\Delta}(K)=u^{\Delta}(K^*)$, where $K^*$ denotes the mirror image of a knot $K$, we do not distinguish a knot from its mirror image in this paper.

\vspace{1em}
\section{Proof of Theorem \ref{thm:main}}\label{sec3}
This section is devoted to the proof of Theorem \ref{thm:main}. 

Here, we introduce the following quantities to simplify the presentation of the expressions:
\begin{align*}
&V^{(k)} : = \frac{1}{4} \sum_{j=1}^{n/2}
(\sum_{i=1}^{j} c^{(k)}_{2i-1} ) c^{(k)}_{2j}, \quad
W^{(k)} : = \frac{1}{4} \sum_{j=1}^{(n-1)/2}
(\sum_{i=1}^{j} c^{(k)}_{2i} ) c^{(k)}_{2j+1} \\  
& X^{(k)} : = \frac{1}{2} \sum_{i=1}^{(n+1)/2} c^{(k)}_{2i-1}, \quad
Y^{(k)} : =  \frac{1}{2} \sum_{i=1}^{n/2} c^{(k)}_{2i}.
\end{align*}

For $k=1$, we may replace this expansion by any equivalent finite
continued fraction expansion whenever it is convenient for later arguments.
In particular, for simplicity of notation, we write
\[
\frac{\beta_1}{\alpha_1}
=
\cfrac{1}{c_1
 + \cfrac{1}{c_2
 + \cfrac{1}{\ddots
 + \cfrac{1}{c_n}}}},
\]
where $n = n(1)$ and $c_i = c^{(1)}_i$ for $1 \le i \le n$.

For simplicity of notation in the proof, we denote
\[
K =
M\Big(0; (\alpha_1, \beta_1), \dots, (\alpha_r, \beta_r)\Big)
:= MC_1(c_1, c_2, \dots, c_n),
\]
where the right-hand side denotes the presentation determined by
the continued fraction expansion of $\beta_1/\alpha_1$.

We make use of the knot diagram depicted in Figure \ref{fig:montetype}.

\main*

\subsection{Proof of (1)} 
\label{subsec31}

\leavevmode

\begin{proof}
(1)
Let $k_+ = MC_1(c_1, c_2, ... , c_{n-2}, c_{n-1}+2, c_n)$ and $K = k_- = MC_1(c_1, c_2, ... , c_{n-2}, c_{n-1}, c_n)$. 
By Proposition \ref{prop:lk}, 
$a_2(MC_1(c_1, c_2, ... , c_{n-2}, c_{n-1}+2, c_n)) - a_2(MC_1(c_1, c_2, ... , c_{n-2}, c_{n-1}, c_n)) = lk(k_0) = - \frac{1}{2} c_n$.

By repeating the computation $- \frac{1}{2}c_{n-1}$ times, we have
\begin{align*}
a_2(MC_1(c_1, c_2, ... , c_{n-2}, c_{n-1}+2, c_n))
& - a_2(MC_1(c_1, c_2, ... , c_{n-2}, c_{n-1}, c_n)) 
=  - \frac{1}{2} c_n \quad, ... , \\
a_2(MC_1(c_1, c_2, ... , c_{n-2}, 0, c_n)) 
& - a_2(MC_1(c_1, c_2, ... , c_{n-2}, -2, c_n))
= - \frac{1}{2} c_n .
\end{align*}

Here, $MC_1(c_1, c_2, ... , c_{n-3}, c_{n-2}, 0, c_n) \cong MC_1(c_1, c_2, ... ,c_{n-3}, c_{n-2} + c_n)$.

By continuing with another $- \frac{1}{2} c_{n-3}$ steps, we have
\begin{align*}
a_2(MC_1(c_1, c_2, ... , c_{n-4}, c_{n-3}+2, c_{n-2} + c_n)) & - a_2(MC_1(c_1, c_2, ... ,c_{n-4}, c_{n-3}, c_{n-2} + c_n)) \\
& = - \frac{1}{2} ( c_{n-2} + c_n )  \quad, ... , \\
a_2(MC_1(c_1, c_2, ... , c_{n-4}, 0, c_{n-2} + c_n)) & - a_2(MC_1(c_1, c_2, ... , c_{n-4}, -2, c_{n-2} + c_n)) \\
& = - \frac{1}{2} ( c_{n-2} + c_n ) .
\end{align*}

Here, $MC_1((c_1, c_2, ... , c_{n-5}, c_{n-4}, 0, c_{n-2} + c_n)) \cong MC_1((c_1, c_2, ... , c_{n-5}, c_{n-4} + c_{n-2} + c_n))$.

Proceeding further, we have 
\begin{align*}
a_2(MC_1(c_1, c_2 + 2, c_3 + c_5 +... + c_{n-2} + c_n))
& - a_2(MC_1(c_1, c_2, c_3 + c_5 + ... + c_{n-2} + c_n)) \\
& = - \frac{1}{2} ( c_3 + c_5 + ... + c_{n-2} + c_n )
\quad, ... ,  \\
a_2(MC_1(c_1, 0, c_3 + c_5 + ... + c_{n-2} + c_n))
& - a_2(MC_1(c_1, -2, c_3 + c_5 + ... + c_{n-2} + c_n)) \\
& = - \frac{1}{2} ( c_3 + c_5 + ... + c_{n-2} + c_n ) .
\end{align*}

Here, $MC_1(c_1, 0, c_3 + c_5 + ... + c_{n-2} + c_n) \cong MC_1(c_1 + c_3 + c_5 + ... + c_{n-2} + c_n)$.

By summing up, we obtain 
\begin{align*}
& - a_2(MC_1(c_1, c_2, ... , c_n)) + a_2(MC_1(c_1 + c_3 + c_5 + ... + c_{n-2} + c_n))\\
& = \frac{1}{4} \{ c_n c_{n-1} + (c_{n-2} + c_n) c_{n-3} + ... + ( c_3 + ... + c_n ) c_2 \} \\
& = W^{(1)}.
\end{align*}

Thus,
\begin{align*}
a_2(MC_1(c_1, c_2, ... , c_n)) 
- a_2(MC_1(c_1 + c_3 + c_5 + ... + c_{n-2} + c_n))
= -W^{(1)}.
\end{align*}

By the same argument, we have
\begin{align*}
a_2(K) - a_2(K') = - \sum_{k=1}^{r} W^{(k)},
\end{align*}
where
\begin{align*}
K' & = P\big( \sum_{i=1}^{(n(1)+1)/2} c^{(1)}_{2i-1} ,  \sum_{i=1}^{(n(2)+1)/2}c^{(2)}_{2i-1}, ... ,
\sum_{i=1}^{(n(r)+1)/2}c^{(r)}_{2i-1} \big) \\
& = P(2X^{(1)}, 2X^{(2)}, ... , 2X^{(r)}).
\end{align*}

Here $K'$ is a positive pretzel knot of odd type.
By Proposition \ref{prop:preodd}, we obtain
\begin{align*}
a_2(K) & = - \sum_{k=1}^{r} W^{(k)} + a_2(K') \\
& = - \sum_{k=1}^{r} W^{(k)}
+ \sum_{1 \leq i < j}^{r} X^{(i)} X^{(j)} + \frac{1}{8} ( r-1 ).
\end{align*}

By the same way, we can deform $K$ into 
$K' = M\big( 0 ; ({\alpha_1}', {\beta_1}'), \ldots, ({\alpha}'_r, {\beta}'_r)\big)$, where
$L \big( T({\beta}'_k, {\alpha}'_k) \big)
= C\big( c^{(k)}_1 + c^{(k)}_3 + ...  + c^{(k)}_{n(k) - 2} + c^{(k)}_{n(k)} \big) 
= C\big(2X^{(k)}\big)$
for $1 \leq k \leq r$.

By Claim \ref{claim:ABCDEF}, we have 
\begin{align*}
d_G^{\Delta}(K, K') 
\leq  - \sum_{k=1}^{r} W^{(k)}.
\end{align*}

Here, 
\begin{align*}
K' & = P\big( \sum_{i=1}^{(n(1)+1)/2} c^{(1)}_{2i-1} ,  \sum_{i=1}^{(n(2)+1)/2}c^{(2)}_{2i-1}, ... ,
\sum_{i=1}^{(n(r)+1)/2}c^{(r)}_{2i-1} \big) \\
& = P(2X^{(1)}, 2X^{(2)}, ... , 2X^{(r)}).
\end{align*}

By Proposition \ref{prop:preodd}, we have 
\begin{align*}
u^{\Delta}(K') 
& = \sum_{1 \leq i < j}^{r} X^{(i)} X^{(j)} + \frac{1}{8} ( r-1 ).
\end{align*}

Therefore, we have 
\begin{align*}
u^{\Delta}(K) 
& \leq d_G^{\Delta}(K, K') + d_G^{\Delta}(K', O) \\
& = - \sum_{k=1}^{r} W^{(k)} 
+ \sum_{1 \leq i < j}^{r} X^{(i)} X^{(j)} + \frac{1}{8} ( r-1 )\\
& = a_2(K) = |a_2(K)|.
\end{align*}

By Proposition \ref{prop:a2}, we also have $u^{\Delta}(K) \geq |a_2(K)|$. 
Thus, we conclude that $u^{\Delta}(K)=|a_2(K)|$.
The proof is complete.
\end{proof}

\subsection{Proof of (2)} 
\label{subsec32}

\leavevmode

\begin{proof}
(2)
Let $k_+ = MC_1(c_1, c_2, ... , c_{n-2}, c_{n-1}, c_n)$ and $k_- = MC_1(c_1, c_2, ... , c_{n-2}, c_{n-1}-2, c_n)$. By Proposition \ref{prop:lk}, $a_2(MC_1(c_1, c_2, ... , c_{n-2}, c_{n-1}, c_n)) - a_2(MC_1(c_1, c_2, ... , c_{n-2}, c_{n-1}-2, c_n)) = lk(k_0) = \frac{1}{2} c_n$.

By repeating the  computation $\frac{1}{2}c_{n-1}$ times, we have
\begin{align*}
a_2(MC_1(c_1, c_2, ... , c_{n-2}, c_{n-1}, c_n)) & - a_2(MC_1(c_1, c_2, ... , c_{n-2}, c_{n-1}-2, c_n)) =  \frac{1}{2} c_n \quad, ... , \\
a_2(MC_1(c_1, c_2, ... , c_{n-2}, 2, c_n)) & - a_2(MC_1(c_1, c_2, ... , c_{n-2}, 0, c_n)) = \frac{1}{2} c_n .
\end{align*}

Here, $MC_1(c_1, c_2, ... , c_{n-3}, c_{n-2}, 0, c_n) \cong MC_1(c_1, c_2, ... ,c_{n-3}, c_{n-2} + c_n)$.

By continuing with another $\frac{1}{2} c_{n-3}$ steps, we have
\begin{align*}
a_2(MC_1(c_1, c_2, ... , c_{n-4}, c_{n-3}, c_{n-2} + c_n)) & - a_2(MC_1(c_1, c_2, ... ,c_{n-4}, c_{n-3}-2, c_{n-2} + c_n)) \\
& = \frac{1}{2} ( c_{n-2} + c_n )  \quad, ... , \\
a_2(MC_1(c_1, c_2, ... , c_{n-4}, 2, c_{n-2} + c_n)) & - a_2(MC_1(c_1, c_2, ... , c_{n-4}, 0, c_{n-2} + c_n)) \\
& = \frac{1}{2} ( c_{n-2} + c_n ) .
\end{align*}

Here, $MC_1((c_1, c_2, ... , c_{n-5}, c_{n-4}, 0, c_{n-2} + c_n)) \cong MC_1((c_1, c_2, ... , c_{n-5}, c_{n-4} + c_{n-2} + c_n))$.

Proceeding further, we have 
\begin{align*}
a_2(MC_1(c_1, c_2, c_3 + c_5 +... + c_{n-2} + c_n)) & - a_2(MC_1(c_1, c_2 - 2, c_3 + c_5 + ... + c_{n-2} + c_n)) \\
& = \frac{1}{2} ( c_3 + c_5 + ... + c_{n-2} + c_n )
\quad, ... ,  \\
a_2(MC_1(c_1, 2, c_3 + c_5 + ... + c_{n-2} + c_n)) & - a_2(MC_1(c_1, 0, c_3 + c_5 + ... + c_{n-2} + c_n)) \\
& = \frac{1}{2} ( c_3 + c_5 + ... + c_{n-2} + c_n ) .
\end{align*}

Here, $MC_1(c_1, 0, c_3 + c_5 + ... + c_{n-2} + c_n) \cong MC_1(c_1 + c_3 + c_5 + ... + c_{n-2} + c_n)$.

By summing up, we obtain 
\begin{align*}
& a_2(MC_1(c_1, c_2, ... , c_n)) - a_2(MC_1(c_1 + c_3 + c_5 + ... + c_{n-2} + c_n))\\
& = \frac{1}{4} \{ c_n c_{n-1} + (c_{n-2} + c_n) c_{n-3} + ... + ( c_3 + ... + c_n ) c_2 \} \\
& = W^{(1)}.
\end{align*}

Proceeding further, we have 
\begin{align*}
a_2( MC_1(c_1 + c_3 + c_5 + ... + c_{n-2} + c_n))
& - a_2( MC_1(c_1 + c_3 + c_5 + ... + c_{n-2} + c_n - 2)) \\
& = (\sum_{i=1}^{(n+1)/2}\frac{1}{2}c_{2i-1} - \frac{1}{2} )
+ \sum_{k=2}^{r} (\sum_{i=1}^{(n(k)+1)/2}\frac{1}{2}c^{(k)}_{2i-1}) , ... , \\
a_2(MC_1(2)) - a_2(MC_1(0))
& = \frac{1}{2}
+ \sum_{k=2}^{r} (\sum_{i=1}^{(n(k)+1)/2}\frac{1}{2}c^{(k)}_{2i-1}).
\end{align*}

By summing up, we obtain 
\begin{align*}
& a_2( MC_1(c_1 + c_3 + c_5 + ... + c_{n-2} + c_n))
 - a_2(MC_1(0)) \\
& = \frac{1}{2} \{ \sum_{i=1}^{(n+1)/2}\frac{1}{2}c_{2i-1} \}^2 
+  (\sum_{i=1}^{(n+1)/2}\frac{1}{2}c_{2i-1}) 
\{ \sum_{k=2}^{r} 
(\sum_{i=1}^{(n(k)+1)/2} \frac{1}{2} c^{(k)}_{2i-1}) \} 
\\
& = \frac{1}{2} {X^{(1)}}^2 + X^{(1)} \sum_{k=2}^{r} X^{(k)}.
\end{align*}

Here, $MC_1(0) \cong L \big( T(\beta_2 / \alpha_2) \big) \# ... \# L \big( T(\beta_r / \alpha_r) \big)$,
hence $a_2(MC_1(0)) = a_2(L \big( T(\beta_2 / \alpha_2) \big) ) + ... + a_2(L \big( T(\beta_r / \alpha_r) \big) )$.

By Proposition \ref{prop:oddnew}(2), we obtain
\begin{align*}
& a_2(MC_1(c_1, c_2, ... , c_n)) \\
& = W^{(1)}
+ a_2(MC_1(c_1 + c_3 + c_5 + ... + c_{n-2} + c_n)) \\
& =  W^{(1)} + \frac{1}{2} {X^{(1)}}^2 + X^{(1)} \sum_{k=2}^{r} X^{(k)} 
+ a_2(L \big( T(\beta_2 / \alpha_2) \big) ) + ... 
+ a_2(L \big( T(\beta_r / \alpha_r) \big) ) \\
& =  W^{(1)} + \frac{1}{2} {X^{(1)}}^2 + X^{(1)} \sum_{k=2}^{r} X^{(k)} 
+ \sum_{k=2}^{r} W^{(k)}
+ \sum_{k=2}^{r} \frac{1}{8} \{{(2X^{(k)})}^2 -1\} \\
& =  \sum_{k=1}^{r} W^{(k)} 
+ \frac{1}{2} \sum_{k=1}^{r} {{X^{(k)}}^2}
+ X^{(1)} \sum_{k=2}^{r} X^{(k)} 
- \frac{1}{8} ( r-1 ).
\end{align*}

By the same way, we can deform $K$ into 
$K' = M\big( 0 ; ({\alpha_1}', {\beta_1}'), \ldots, ({\alpha}'_r, {\beta}'_r)\big)$, where
$L \big( T({\beta}'_k, {\alpha}'_k) \big)
= C\big( c^{(k)}_1 + c^{(k)}_3 + ...  + c^{(k)}_{n(k) - 2} + c^{(k)}_{n(k)} \big) 
= C\big( 2X^{(k)}\big)$ 
for $1 \leq k \leq r$.

By Claim \ref{claim:ABCDEF}, we have 
\begin{align*}
d_G^{\Delta}(K, K') 
\leq \sum_{k=1}^{r} W^{(k)}.
\end{align*}

Here, 
\begin{align*}
K' & = P\big( \sum_{i=1}^{(n(1)+1)/2} c^{(1)}_{2i-1} ,  \sum_{i=1}^{(n(2)+1)/2}c^{(2)}_{2i-1}, ... ,
\sum_{i=1}^{(n(r)+1)/2}c^{(r)}_{2i-1} \big) \\
& = P(2X^{(1)}, 2X^{(2)}, ... , 2X^{(r)}),
\end{align*}
then $K'$ is a positive pretzel knot of even type.

By Proposition \ref{prop:preeven}(1), we have 
\begin{align*}
u^{\Delta}(K') 
= \frac{1}{2} \sum_{k=1}^{r} {X^{(k)}}^2
+ X^{(1)} \sum_{k=2}^{r} X^{(k)} 
- \frac{1}{8} (r-1).
\end{align*}

Therefore, we have 
\begin{align*}
u^{\Delta}(K) 
& \leq d_G^{\Delta}(K, K') + d_G^{\Delta}(K', O) \\
& = \sum_{k=1}^{r} W^{(k)} 
+ \frac{1}{2} \sum_{k=1}^{r} {X^{(k)}}^2
+ X^{(1)} \sum_{k=2}^{r} X^{(k)} 
- \frac{1}{8} (r-1)\\
& = a_2(K) = |a_2(K)|.
\end{align*}

By Proposition \ref{prop:a2}, we also have $u^{\Delta}(K) \geq |a_2(K)|$. 
Thus, we conclude that $u^{\Delta}(K)=|a_2(K)|$.
The proof is complete.
\end{proof}

\subsection{Proof of (3)} 
\label{subsec33}

\leavevmode

\begin{proof}
(3)
Let $k_+ = MC_1(c_1, c_2, ... , c_{n-2}, c_{n-1}, c_n)$ and $k_- = MC_1(c_1, c_2, ... , c_{n-2}, c_{n-1}-2, c_n)$. By Proposition \ref{prop:lk}, $a_2(MC_1(c_1, c_2, ... , c_{n-2}, c_{n-1}, c_n)) - a_2(MC_1(c_1, c_2, ... , c_{n-2}, c_{n-1}-2, c_n)) = lk(k_0) = - \frac{1}{2} c_n$.

By repeating the  computation $\frac{1}{2}c_{n-1}$ times, we have
\begin{align*}
a_2(MC_1(c_1, c_2, ... , c_{n-2}, c_{n-1}, c_n)) 
& - a_2(MC_1(c_1, c_2, ... , c_{n-2}, c_{n-1}-2, c_n))
=  - \frac{1}{2} c_n \quad, ... , \\
a_2(MC_1(c_1, c_2, ... , c_{n-2}, 2, c_n)) 
& - a_2(MC_1(c_1, c_2, ... , c_{n-2}, 0, c_n))
= - \frac{1}{2} c_n .
\end{align*}

Here, $MC_1(c_1, c_2, ... , c_{n-3}, c_{n-2}, 0, c_n) \cong MC_1(c_1, c_2, ... ,c_{n-3}, c_{n-2} + c_n)$.

By continuing with another $\frac{1}{2} c_{n-3}$ steps, we have
\begin{align*}
a_2(MC_1(c_1, c_2, ... , c_{n-4}, c_{n-3}, c_{n-2} + c_n)) & - a_2(MC_1(c_1, c_2, ... ,c_{n-4}, c_{n-3}-2, c_{n-2} + c_n)) \\
& = - \frac{1}{2} ( c_{n-2} + c_n )  \quad, ... , \\
a_2(MC_1(c_1, c_2, ... , c_{n-4}, 2, c_{n-2} + c_n)) 
& - a_2(MC_1(c_1, c_2, ... , c_{n-4}, 0, c_{n-2} + c_n)) \\
& = - \frac{1}{2} ( c_{n-2} + c_n ) .
\end{align*}

Here, $MC_1((c_1, c_2, ... , c_{n-5}, c_{n-4}, 0, c_{n-2} + c_n)) \cong MC_1((c_1, c_2, ... , c_{n-5}, c_{n-4} + c_{n-2} + c_n))$.

Proceeding further, we have 
\begin{align*}
a_2(MC_1(c_1, c_2, c_3 + c_5 +... + c_{n-2} + c_n)) 
& - a_2(MC_1(c_1, c_2 - 2, c_3 + c_5 + ... + c_{n-2} + c_n)) \\
& = - \frac{1}{2} ( c_3 + c_5 + ... + c_{n-2} + c_n )
\quad, ... ,  \\
a_2(MC_1(c_1, 2, c_3 + c_5 + ... + c_{n-2} + c_n)) & - a_2(MC_1(c_1, 0, c_3 + c_5 + ... + c_{n-2} + c_n)) \\
& = - \frac{1}{2} ( c_3 + c_5 + ... + c_{n-2} + c_n ) .
\end{align*}

Here, $MC_1(c_1, 0, c_3 + c_5 + ... + c_{n-2} + c_n) \cong MC_1(c_1 + c_3 + c_5 + ... + c_{n-2} + c_n)$.

By summing up, we obtain 
\begin{align*}
& a_2(MC_1(c_1, c_2, ... , c_n)) - a_2(MC_1(c_1 + c_3 + c_5 + ... + c_{n-2} + c_n))\\
& = - \frac{1}{4} \{ c_n c_{n-1} + (c_{n-2} + c_n) c_{n-3} + ... + ( c_3 + ... + c_n ) c_2 \} \\
& = - W^{(1)}.
\end{align*}

Proceeding further, we have 
\begin{align*}
a_2( MC_1(c_1 + c_3 + c_5 + ... + c_{n-2} + c_n))
& - a_2( MC_1(c_1 + c_3 + c_5 + ... + c_{n-2} + c_n + 2)) \\
& = \sum_{k=2}^{r} (\sum_{i=1}^{(n(k)+1)/2}\frac{1}{2}c^{(k)}_{2i-1}) , ... , \\
a_2(MC_1(-2)) - a_2(MC_1(0))
& = \sum_{k=2}^{r} (\sum_{i=1}^{(n(k)+1)/2}\frac{1}{2}c^{(k)}_{2i-1}).
\end{align*}

By summing up, we obtain 
\begin{align*}
& a_2( MC_1(c_1 + c_3 + c_5 + ... + c_{n-2} + c_n))
 - a_2(MC_1(0)) \\
& =  ( - \sum_{i=1}^{(n+1)/2}\frac{1}{2}c_{2i-1})
\{ \sum_{k=2}^{r} 
(\sum_{i=1}^{(n(k)+1)/2} \frac{1}{2} c^{(k)}_{2i-1}) \}
 \\
& = - X^{(1)} \sum_{k=2}^{r} X^{(k)}.
\end{align*}

Here, $MC_1(0) \cong L \big( T(\beta_2 / \alpha_2) \big) \# ... \# L \big( T(\beta_r / \alpha_r) \big)$,
hence $a_2(MC_1(0)) = a_2(L \big( T(\beta_2 / \alpha_2) \big) ) + ... + a_2(L \big( T(\beta_r / \alpha_r) \big) )$.

By Proposition \ref{prop:oddnew}(2), we obtain
\begin{align*}
& a_2(MC_1(c_1, c_2, ... , c_n)) \\
& = - W^{(1)} 
+ a_2(MC_1(c_1 + c_3 + c_5 + ... + c_{n-2} + c_n)) \\
& =  - W^{(1)} - X^{(1)} \sum_{k=2}^{r} X^{(k)} 
+ a_2(L \big( T(\beta_2 / \alpha_2) \big) ) + ... 
+ a_2(L \big( T(\beta_r / \alpha_r) \big) ) \\
& =  - W^{(1)} - X^{(1)} \sum_{k=2}^{r} X^{(k)} 
+ \sum_{k=2}^{r} W^{(k)}
+ \sum_{k=2}^{r} \frac{1}{8} \{{(2X^{(k)})}^2 -1\} \\
& = - W^{(1)} + \sum_{k=2}^{r} W^{(k)}
+ \frac{1}{2} \sum_{k=2}^{r} {{X^{(k)}}^2}
- X^{(1)} \sum_{k=2}^{r} X^{(k)}
- \frac{1}{8} ( r-1 ).
\end{align*}

By the same way, we can deform $K$ into 
$K' = M\big( 0 ; ({\alpha_1}', {\beta_1}'), \ldots, ({\alpha}'_r, {\beta}'_r)\big)$, where
$L \big( T({\beta}'_k, {\alpha}'_k) \big)
= C\big( c^{(k)}_1 + c^{(k)}_3 + ...  + c^{(k)}_{n(k) - 2} + c^{(k)}_{n(k)} \big) 
= C \big( 2X^{(k)} \big)$ 
for $1 \leq k \leq r$.

By Claim \ref{claim:ABCDEF}, we have 
\begin{align*}
d_G^{\Delta}(K, K') 
& \leq  - W^{(1)} + \sum_{k=2}^{r} W^{(k)}.
\end{align*}

Here, 
\begin{align*}
K' & = P\big( \sum_{i=1}^{(n(1)+1)/2} c^{(1)}_{2i-1} ,  \sum_{i=1}^{(n(2)+1)/2}c^{(2)}_{2i-1}, ... ,
\sum_{i=1}^{(n(r)+1)/2}c^{(r)}_{2i-1} \big) \\
& = P(2X^{(1)}, 2X^{(2)}, ... , 2X^{(r)}),
\end{align*}
then $K'$ is a positive pretzel knot of even type.

By Proposition \ref{prop:preeven}(2), we have 
\begin{align*}
u^{\Delta}(K') 
= \frac{1}{2} \sum_{k=2}^{r} {X^{(k)}}^2
- X^{(1)} \sum_{k=2}^{r} X^{(k)} 
- \frac{1}{8} (r-1).
\end{align*}

Therefore, we have 
\begin{align*}
u^{\Delta}(K) 
& \leq d_G^{\Delta}(K, K') + d_G^{\Delta}(K', O) \\
& = -W^{(1)} + \sum_{k=2}^{r} W^{(k)} 
+ \frac{1}{2} \sum_{k=2}^{r} {X^{(k)}}^2
- X^{(1)} \sum_{k=2}^{r} X^{(k)} 
- \frac{1}{8} (r-1)\\
& = a_2(K) = |a_2(K)|.
\end{align*}

By Proposition \ref{prop:a2}, we also have $u^{\Delta}(K) \geq |a_2(K)|$. 
Thus, we conclude that $u^{\Delta}(K)=|a_2(K)|$.
The proof is complete.
\end{proof}

\subsection{Proof of (4)} 
\label{subsec34}

\leavevmode

\begin{proof}
(4)
Let $k_+ = MC_1(c_1, c_2, ... , c_{n-2}, c_{n-1}, c_n)$ and $k_- = MC_1(c_1, c_2, ... , c_{n-2}, c_{n-1}-2, c_n)$. By Proposition \ref{prop:lk}, $a_2(MC_1(c_1, c_2, ... , c_{n-2}, c_{n-1}, c_n)) - a_2(MC_1(c_1, c_2, ... , c_{n-2}, c_{n-1}-2, c_n)) = lk(k_0) = - \frac{1}{2} c_n$.

By repeating the  computation $\frac{1}{2}c_{n-1}$ times, we have
\begin{align*}
a_2(MC_1(c_1, c_2, ... , c_{n-2}, c_{n-1}, c_n)) & - a_2(MC_1(c_1, c_2, ... , c_{n-2}, c_{n-1}-2, c_n)) = - \frac{1}{2} c_n \quad, ... , \\
a_2(MC_1(c_1, c_2, ... , c_{n-2}, 2, c_n)) & - a_2(MC_1(c_1, c_2, ... , c_{n-2}, 0, c_n)) = - \frac{1}{2} c_n .
\end{align*}

Here, $MC_1(c_1, c_2, ... , c_{n-3}, c_{n-2}, 0, c_n) \cong MC_1(c_1, c_2, ... ,c_{n-3}, c_{n-2} + c_n)$.

By continuing with another $\frac{1}{2} c_{n-3}$ steps, we have
\begin{align*}
a_2(MC_1(c_1, c_2, ... , c_{n-4}, c_{n-3}, c_{n-2} + c_n)) & - a_2(MC_1(c_1, c_2, ... ,c_{n-4}, c_{n-3}-2, c_{n-2} + c_n)) \\
& = - \frac{1}{2} ( c_{n-2} + c_n )  \quad, ... , \\
a_2(MC_1(c_1, c_2, ... , c_{n-4}, 2, c_{n-2} + c_n)) & - a_2(MC_1(c_1, c_2, ... , c_{n-4}, 0, c_{n-2} + c_n)) \\
& = - \frac{1}{2} ( c_{n-2} + c_n ) .
\end{align*}

Here, $MC_1((c_1, c_2, ... , c_{n-5}, c_{n-4}, 0, c_{n-2} + c_n)) \cong MC_1((c_1, c_2, ... , c_{n-5}, c_{n-4} + c_{n-2} + c_n))$.

Proceeding further, we have 
\begin{align*}
a_2(MC_1(c_1, c_2, c_3 + c_5 +... + c_{n-2} + c_n)) & - a_2(MC_1(c_1, c_2 - 2, c_3 + c_5 + ... + c_{n-2} + c_n)) \\
& = - \frac{1}{2} ( c_3 + c_5 + ... + c_{n-2} + c_n )
\quad, ... ,  \\
a_2(MC_1(c_1, 2, c_3 + c_5 + ... + c_{n-2} + c_n)) & - a_2(MC_1(c_1, 0, c_3 + c_5 + ... + c_{n-2} + c_n)) \\
& = - \frac{1}{2} ( c_3 + c_5 + ... + c_{n-2} + c_n ) .
\end{align*}

Here, $MC_1(c_1, 0, c_3 + c_5 + ... + c_{n-2} + c_n) \cong MC_1(c_1 + c_3 + c_5 + ... + c_{n-2} + c_n)$.

By summing up, we obtain 
\begin{align*}
& a_2(MC_1(c_1, c_2, ... , c_n)) - a_2(MC_1(c_1 + c_3 + c_5 + ... + c_{n-2} + c_n))\\
& = - \frac{1}{4} \{ c_n c_{n-1} + (c_{n-2} + c_n) c_{n-3} + ... + ( c_3 + ... + c_n ) c_2 \} \\
& = - W^{(1)}.
\end{align*}

Proceeding further, we have 
\begin{align*}
a_2( MC_1(c_1 + c_3 + c_5 + ... + c_{n-2} + c_n - 2))
& - a_2( MC_1(c_1 + c_3 + c_5 + ... + c_{n-2} + c_n)) \\
& = \sum_{k=2}^{r} (\sum_{i=1}^{n(k)/2}\frac{1}{2}c^{(k)}_{2i}) , ... , \\
a_2(MC_1(0)) - a_2(MC_1(2)) & = \sum_{k=2}^{r} (\sum_{i=1}^{n(k)/2}\frac{1}{2}c^{(k)}_{2i}).
\end{align*}

By summing up, we obtain 
\begin{align*}
a_2(MC_1(0)) - a_2( MC_1(c_1 + c_3 + c_5 + ... + c_{n-2} + c_n)) 
& = (\sum_{i=1}^{(n+1)/2}\frac{1}{2}c_{2i-1})
\{ \sum_{k=2}^{r} (\sum_{i=1}^{n(k)/2}\frac{1}{2}c^{(k)}_{2i}) \} \\
& = X^{(1)} \sum_{k=2}^{r} Y^{(k)}.
\end{align*}

Here, $MC_1(0) \cong L \big( T(\beta_2 / \alpha_2) \big) \# ... \# L \big( T(\beta_r / \alpha_r) \big)$,
hence $a_2(MC_1(0)) = a_2(L \big( T(\beta_2 / \alpha_2) \big) ) + ... + a_2(L \big( T(\beta_r / \alpha_r) \big) )$.

By Proposition \ref{prop:even}, we obtain
\begin{align*}
& a_2(MC_1(c_1, c_2, ... , c_n)) \\
& = - W^{(1)}
+ a_2(MC_1(c_1 + c_3 + c_5 + ... + c_{n-2} + c_n)) \\
& = - W^{(1)}
- X^{(1)} \sum_{k=2}^{r} Y^{(k)}
+ a_2(L \big( T(\beta_2 / \alpha_2) \big) ) + ... 
+ a_2(L \big( T(\beta_r / \alpha_r) \big) ) \\
& = - W^{(1)}
- \sum_{k=2}^{r} V^{(k)}
- X^{(1)} \sum_{k=2}^{r} Y^{(k)}.
\end{align*}

By the same way, we can deform $K$ into 
$K' = M\big( 0 ; ({\alpha_1}', {\beta_1}'), \ldots, ({\alpha}'_r, {\beta}'_r)\big)$, where
$L \big( T({\beta}'_1, {\alpha}'_1) \big)
= C\big(c^{(1)}_1 + c^{(1)}_3 + ...  + c^{(1)}_{n(1) - 2} + c^{(1)}_{n(1)} \big)
= C \big( 2X^{(1)} \big)$, and
$L \big( T({\beta}'_k, {\alpha}'_k) \big)
= C\big(0, c^{(k)}_2 + c^{(k)}_4 + ...  + c^{(k)}_{n(k) - 2} + c^{(k)}_{n(k)} \big) 
= C \big(0, 2Y^{(k)} \big)$
for $2 \leq k \leq r$.

By Claim \ref{claim:ABCDEF}, we have 
\begin{align*}
d_G^{\Delta}(K, K') 
\leq W^{(1)} + \sum_{k=2}^{r} V^{(k)}.
\end{align*}

Here, 
\begin{align*}
K'= C\big( \sum_{i=1}^{(n(1)+1)/2} c^{(1)}_{2i-1} , \sum_{k=2}^{r} ( \sum_{i=1}^{n(k)/2}c^{(k)}_{2i}) \big)
= C(2 X^{(1)}, 2 \sum_{k=2}^{r} Y^{(k)}).
\end{align*}

By Proposition \ref{prop:even}, we have 
\begin{align*}
u^{\Delta}(K') 
= X^{(1)} \sum_{k=2}^{r} Y^{(k)}.
\end{align*}

Therefore, we have 
\begin{align*}
u^{\Delta}(K) 
& \leq d_G^{\Delta}(K, K') + d_G^{\Delta}(K', O) \\
& \leq W^{(1)} + \sum_{k=2}^{r} V^{(k)} 
+ X^{(1)} \sum_{k=2}^{r} Y^{(k)}\\
& = -a_2(K) = |a_2(K)|.
\end{align*}

By Proposition \ref{prop:a2}, we also have $u^{\Delta}(K) \geq |a_2(K)|$. 
Thus, we conclude that $u^{\Delta}(K)=|a_2(K)|$.
The proof is complete.
\end{proof}

\subsection{Proof of (5)} 
\label{subsec35}

\leavevmode

\begin{proof}
(5)
Let $k_+ = MC_1(c_1, c_2, ... , c_{n-2}, c_{n-1}+2, c_n)$ and $K = k_- = MC_1(c_1, c_2, ... , c_{n-2}, c_{n-1}, c_n)$. 
By Proposition \ref{prop:lk}, 
$a_2(MC_1(c_1, c_2, ... , c_{n-2}, c_{n-1}+2, c_n)) - a_2(MC_1(c_1, c_2, ... , c_{n-2}, c_{n-1}, c_n)) = lk(k_0) = - \frac{1}{2} c_n$.

By repeating the computation $- \frac{1}{2}c_{n-1}$ times, we have
\begin{align*}
a_2(MC_1(c_1, c_2, ... , c_{n-2}, c_{n-1}+2, c_n))
& - a_2(MC_1(c_1, c_2, ... , c_{n-2}, c_{n-1}, c_n)) 
=  - \frac{1}{2} c_n \quad, ... , \\
a_2(MC_1(c_1, c_2, ... , c_{n-2}, 0, c_n)) 
& - a_2(MC_1(c_1, c_2, ... , c_{n-2}, -2, c_n))
= - \frac{1}{2} c_n .
\end{align*}

Here, $MC_1(c_1, c_2, ... , c_{n-3}, c_{n-2}, 0, c_n) \cong MC_1(c_1, c_2, ... ,c_{n-3}, c_{n-2} + c_n)$.

By continuing with another $- \frac{1}{2} c_{n-3}$ steps, we have
\begin{align*}
a_2(MC_1(c_1, c_2, ... , c_{n-4}, c_{n-3}+2, c_{n-2} + c_n)) & - a_2(MC_1(c_1, c_2, ... ,c_{n-4}, c_{n-3}, c_{n-2} + c_n)) \\
& = - \frac{1}{2} ( c_{n-2} + c_n )  \quad, ... , \\
a_2(MC_1(c_1, c_2, ... , c_{n-4}, 0, c_{n-2} + c_n)) & - a_2(MC_1(c_1, c_2, ... , c_{n-4}, -2, c_{n-2} + c_n)) \\
& = - \frac{1}{2} ( c_{n-2} + c_n ) .
\end{align*}

Here, $MC_1((c_1, c_2, ... , c_{n-5}, c_{n-4}, 0, c_{n-2} + c_n)) \cong MC_1((c_1, c_2, ... , c_{n-5}, c_{n-4} + c_{n-2} + c_n))$.

Proceeding further, we have 
\begin{align*}
a_2(MC_1(c_1, c_2 + 2, c_3 + c_5 +... + c_{n-2} + c_n))
& - a_2(MC_1(c_1, c_2, c_3 + c_5 + ... + c_{n-2} + c_n)) \\
& = - \frac{1}{2} ( c_3 + c_5 + ... + c_{n-2} + c_n )
\quad, ... ,  \\
a_2(MC_1(c_1, 0, c_3 + c_5 + ... + c_{n-2} + c_n))
& - a_2(MC_1(c_1, -2, c_3 + c_5 + ... + c_{n-2} + c_n)) \\
& = - \frac{1}{2} ( c_3 + c_5 + ... + c_{n-2} + c_n ) .
\end{align*}

Here, $MC_1(c_1, 0, c_3 + c_5 + ... + c_{n-2} + c_n) \cong MC_1(c_1 + c_3 + c_5 + ... + c_{n-2} + c_n)$.

By summing up, we obtain 
\begin{align*}
& - a_2(MC_1(c_1, c_2, ... , c_n)) + a_2(MC_1(c_1 + c_3 + c_5 + ... + c_{n-2} + c_n))\\
& = \frac{1}{4} \{ c_n c_{n-1} + (c_{n-2} + c_n) c_{n-3} + ... + ( c_3 + ... + c_n ) c_2 \} \\
& = W^{(1)}.
\end{align*}

Thus,
\begin{align*}
a_2(MC_1(c_1, c_2, ... , c_n)) 
- a_2(MC_1(c_1 + c_3 + c_5 + ... + c_{n-2} + c_n))
= -W^{(1)}.
\end{align*}

By the same argument, we have
\begin{align*}
a_2(K) - a_2(K') = - W^{(1)} + \sum_{k=2}^{r} V^{(k)},
\end{align*}
where
\begin{align*}
K' & = C\big( \sum_{i=1}^{(n(1)+1)/2} c^{(1)}_{2i-1} ,  \sum_{i=1}^{n(2)/2}c^{(2)}_{2i} + ... +
\sum_{i=1}^{n(r)/2}c^{(r)}_{2i} \big) \\
& = C(2X^{(1)}, 2\sum_{k=2}^{r}Y^{(k)}).
\end{align*}

Here $K'$ is a two-bridge knot, where $2X^{(1)}$ is a positive odd integer and $2\sum_{k=2}^{r}Y^{(k)}$ is a positive even integer.
By Proposition \ref{prop:oddnew}(1), we obtain
\begin{align*}
a_2(K) & = - W^{(1)} + \sum_{k=2}^{r} V^{(k)} + a_2(K') \\
& = - W^{(1)} + \sum_{k=2}^{r} V^{(k)} 
+ X^{(1)} \sum_{k=2}^{r}Y^{(k)}
+ \frac{1}{8}\{2X^{(1)}\}^2 \\
& = - W^{(1)} + \sum_{k=2}^{r} V^{(k)} 
+ \frac{1}{2} {X^{(1)}}^2
+ X^{(1)} \sum_{k=2}^{r}Y^{(k)}.
\end{align*}

By the same way, we can deform $K$ into 
$K' = M\big( 0 ; ({\alpha_1}', {\beta_1}'), \ldots, ({\alpha}'_r, {\beta}'_r)\big)$, where
$L \big( T({\beta}'_1, {\alpha}'_1) \big)
= C\big( c^{(1)}_1 + c^{(1)}_3 + ...  + c^{(1)}_{n(1) - 2} + c^{(1)}_{n(1)} \big)
= C(2X^{(1)})$,
and $L \big( T({\beta}'_k, {\alpha}'_k) \big)
= C\big( 0, c^{(k)}_2 + c^{(k)}_4 + ...  + c^{(k)}_{n(k) - 2} + c^{(k)}_{n(k)} \big) 
= C\big(0, 2Y^{(k)}\big)$
for $2 \leq k \leq r$.

By Claim \ref{claim:ABCDEF}, we have 
\begin{align*}
d_G^{\Delta}(K, K') 
\leq  -W^{(1)} + \sum_{k=2}^{r} V^{(k)}.
\end{align*}

Here, 
\begin{align*}
K' & = C\big( \sum_{i=1}^{(n(1)+1)/2} c^{(1)}_{2i-1} ,  \sum_{i=1}^{n(2)/2}c^{(2)}_{2i} + ... +
\sum_{i=1}^{n(r)/2}c^{(r)}_{2i} \big) \\
& = C(2X^{(1)}, 2\sum_{k=2}^{r}Y^{(k)}).
\end{align*}

By Proposition \ref{prop:oddnew}(1), we have 
\begin{align*}
u^{\Delta}(K') 
& = X^{(1)} \sum_{k=2}^{r}Y^{(k)}
+ \frac{1}{8}\{2X^{(1)}\}^2 \\
& = \frac{1}{2} {X^{(1)}}^2
+ X^{(1)} \sum_{k=2}^{r}Y^{(k)}.
\end{align*}

Therefore, we have 
\begin{align*}
u^{\Delta}(K) 
& \leq d_G^{\Delta}(K, K') + d_G^{\Delta}(K', O) \\
& = - W^{(1)} + \sum_{k=2}^{r} V^{(k)} 
+ \frac{1}{2} {X^{(1)}}^2
+ X^{(1)} \sum_{k=2}^{r}Y^{(k)} \\
& = a_2(K) = |a_2(K)|.
\end{align*}

By Proposition \ref{prop:a2}, we also have $u^{\Delta}(K) \geq |a_2(K)|$. 
Thus, we conclude that $u^{\Delta}(K)=|a_2(K)|$.
The proof is complete.
\end{proof}

\subsection{Proof of (6)} 
\label{subsec36}

\leavevmode

\begin{proof}
(6)
Let $k_+ = MC_1(c_1, c_2, ... , c_{n-2}, c_{n-1}, c_n)$ and $k_- = MC_1(c_1, c_2, ... , c_{n-2}, c_{n-1}-2, c_n)$. By Proposition \ref{prop:lk}, $a_2(MC_1(c_1, c_2, ... , c_{n-2}, c_{n-1}, c_n)) - a_2(MC_1(c_1, c_2, ... , c_{n-2}, c_{n-1}-2, c_n)) = lk(k_0) = - \frac{1}{2} c_n$.

By repeating the  computation $\frac{1}{2}c_{n-1}$ times, we have
\begin{align*}
a_2(MC_1(c_1, c_2, ... , c_{n-2}, c_{n-1}, c_n)) 
& - a_2(MC_1(c_1, c_2, ... , c_{n-2}, c_{n-1}-2, c_n))
=  - \frac{1}{2} c_n \quad, ... , \\
a_2(MC_1(c_1, c_2, ... , c_{n-2}, 2, c_n)) 
& - a_2(MC_1(c_1, c_2, ... , c_{n-2}, 0, c_n))
= - \frac{1}{2} c_n .
\end{align*}

Here, $MC_1(c_1, c_2, ... , c_{n-3}, c_{n-2}, 0, c_n) \cong MC_1(c_1, c_2, ... ,c_{n-3}, c_{n-2} + c_n)$.

By continuing with another $\frac{1}{2} c_{n-3}$ steps, we have
\begin{align*}
a_2(MC_1(c_1, c_2, ... , c_{n-4}, c_{n-3}, c_{n-2} + c_n)) & - a_2(MC_1(c_1, c_2, ... ,c_{n-4}, c_{n-3}-2, c_{n-2} + c_n)) \\
& = - \frac{1}{2} ( c_{n-2} + c_n )  \quad, ... , \\
a_2(MC_1(c_1, c_2, ... , c_{n-4}, 2, c_{n-2} + c_n)) 
& - a_2(MC_1(c_1, c_2, ... , c_{n-4}, 0, c_{n-2} + c_n)) \\
& = - \frac{1}{2} ( c_{n-2} + c_n ) .
\end{align*}

Here, $MC_1((c_1, c_2, ... , c_{n-5}, c_{n-4}, 0, c_{n-2} + c_n)) \cong MC_1((c_1, c_2, ... , c_{n-5}, c_{n-4} + c_{n-2} + c_n))$.

Proceeding further, we have 
\begin{align*}
a_2(MC_1(c_1, c_2, c_3 + c_5 +... + c_{n-2} + c_n)) 
& - a_2(MC_1(c_1, c_2 - 2, c_3 + c_5 + ... + c_{n-2} + c_n)) \\
& = - \frac{1}{2} ( c_3 + c_5 + ... + c_{n-2} + c_n )
\quad, ... ,  \\
a_2(MC_1(c_1, 2, c_3 + c_5 + ... + c_{n-2} + c_n)) & - a_2(MC_1(c_1, 0, c_3 + c_5 + ... + c_{n-2} + c_n)) \\
& = - \frac{1}{2} ( c_3 + c_5 + ... + c_{n-2} + c_n ) .
\end{align*}

Here, $MC_1(c_1, 0, c_3 + c_5 + ... + c_{n-2} + c_n) \cong MC_1(c_1 + c_3 + c_5 + ... + c_{n-2} + c_n)$.

By summing up, we obtain 
\begin{align*}
& a_2(MC_1(c_1, c_2, ... , c_n)) - a_2(MC_1(c_1 + c_3 + c_5 + ... + c_{n-2} + c_n))\\
& = - \frac{1}{4} \{ c_n c_{n-1} + (c_{n-2} + c_n) c_{n-3} + ... + ( c_3 + ... + c_n ) c_2 \} \\
& = - W^{(1)}.
\end{align*}

Proceeding further, we have 
\begin{align*}
a_2( MC_1(c_1 + c_3 + c_5 + ... + c_{n-2} + c_n))
& - a_2( MC_1(c_1 + c_3 + c_5 + ... + c_{n-2} + c_n + 2)) \\
& = \sum_{k=2}^{r} \{\sum_{i=1}^{n(k)/2}
(-\frac{1}{2}c^{(k)}_{2i})\} , ... , \\
a_2(MC_1(-2)) - a_2(MC_1(0))
& = \sum_{k=2}^{r} \{\sum_{i=1}^{n(k)/2}
(-\frac{1}{2}c^{(k)}_{2i})\}.
\end{align*}

By summing up, we obtain 
\begin{align*}
& a_2( MC_1(c_1 + c_3 + c_5 + ... + c_{n-2} + c_n))
 - a_2(MC_1(0)) \\
& =  ( - \sum_{i=1}^{(n+1)/2}\frac{1}{2}c_{2i-1})
[ \sum_{k=2}^{r} 
\{\sum_{i=1}^{n(k)/2}(- \frac{1}{2} c^{(k)}_{2i}) \} ]
 \\
& = X^{(1)} \sum_{k=2}^{r} Y^{(k)}.
\end{align*}

Here, $MC_1(0) \cong L \big( T(\beta_2 / \alpha_2) \big) \# ... \# L \big( T(\beta_r / \alpha_r) \big)$,
hence $a_2(MC_1(0)) = a_2(L \big( T(\beta_2 / \alpha_2) \big) ) + ... + a_2(L \big( T(\beta_r / \alpha_r) \big) )$.

By Proposition \ref{prop:oddnew}(1), we obtain
\begin{align*}
& a_2(MC_1(c_1, c_2, ... , c_n)) \\
& = - W^{(1)} 
+ a_2(MC_1(c_1 + c_3 + c_5 + ... + c_{n-2} + c_n)) \\
& =  - W^{(1)}
+ X^{(1)} \sum_{k=2}^{r} Y^{(k)} 
+ a_2(L \big( T(\beta_2 / \alpha_2) \big) ) + ... 
+ a_2(L \big( T(\beta_r / \alpha_r) \big) ) \\
& =  - W^{(1)}
+ X^{(1)} \sum_{k=2}^{r} Y^{(k)} 
+ \sum_{k=2}^{r} V^{(k)}
+ \sum_{k=2}^{r} \frac{1}{8} {(2Y^{(k)})}^2 \\
& = - W^{(1)} + \sum_{k=2}^{r} V^{(k)}
+ \frac{1}{2} \sum_{k=2}^{r} {{Y^{(k)}}^2}
+ X^{(1)} \sum_{k=2}^{r} Y^{(k)}.
\end{align*}

By the same way, we can deform $K$ into 
$K' = MC_1(0) \cong L \big( T(\beta_2 / \alpha_2) \big) \# ... \# L \big( T(\beta_r / \alpha_r) \big)$.

By Claim \ref{claim:ABCDEF}, we have 
\begin{align*}
d_G^{\Delta}(K, K') 
& \leq d_G^{\Delta} \big(K, MC(c_1 + c_3 + ... + c_n) \big) 
+ d_G^{\Delta} \big(MC(c_1 + c_3 + ... + c_n), K' \big) \\ 
& \leq - W^{(1)}
+ \{\sum_{i=1}^{(n+1)/2}\frac{1}{2}(-c_{2i-1})\}
[ \sum_{k=2}^{r} 
\{\sum_{i=1}^{n(k)/2}(- \frac{1}{2} c^{(k)}_{2i}) \} ] \\
& = - W^{(1)}
+ X^{(1)} \sum_{k=2}^{r} Y^{(k)}.
\end{align*}

By Proposition \ref{prop:oddnew}(1), we have 
\begin{align*}
u^{\Delta}(K') 
= \sum_{k=2}^{r} V^{(k)}
+ \sum_{k=2}^{r} \frac{1}{8} {(2Y^{(k)})}^2.
\end{align*}

Therefore, we have 
\begin{align*}
u^{\Delta}(K) 
& \leq d_G^{\Delta}(K, K') + d_G^{\Delta}(K', O) \\
& = - W^{(1)} + \sum_{k=2}^{r} V^{(k)}
+ \frac{1}{2} \sum_{k=2}^{r} {{Y^{(k)}}^2}
+ X^{(1)} \sum_{k=2}^{r} Y^{(k)}\\
& = a_2(K) = |a_2(K)|.
\end{align*}

By Proposition \ref{prop:a2}, we also have $u^{\Delta}(K) \geq |a_2(K)|$. 
Thus, we conclude that $u^{\Delta}(K)=|a_2(K)|$.
The proof is complete.
\end{proof}

\section{Remarks on Theorem \ref{thm:main}}\label{sec4}



We make the following remarks on Theorem \ref{thm:main}.

\begin{remark}
The correspondence between the parts of Theorem \ref{thm:main}
and the previously established propositions is as follows.

\begin{description}
\item[(1)] Proposition \ref{prop:preodd}.
\item[(2)] Proposition \ref{prop:preeven}(1) and Proposition \ref{prop:oddnew}(2).
\item[(3)] Proposition \ref{prop:preeven}(2) and Proposition \ref{prop:oddnew}(2).
\item[(4)] Proposition \ref{prop:even}.
\item[(5)] Proposition \ref{prop:oddnew}(1).
\item[(6)] Proposition \ref{prop:oddnew}(1).
\end{description}

Consequently, every positive pretzel knot in Propositions \ref{prop:preodd} and \ref{prop:preeven} belongs to one of the
families described in Parts (1)--(3), and every two-bridge knot in
Propositions \ref{prop:even} and \ref{prop:oddnew} belongs to one of the families described in Parts (2)--(6).
\end{remark}

\begin{remark}\label{remark:thm11}
In the setting of Theorem \ref{thm:main}, 
let $K' = M\big(0; (\alpha'_1,\beta'_1), \dots, (\alpha'_r,\beta'_r)\big)$
be a Montesinos knot, where $(\alpha'_k,\beta'_k)$ is defined in Claim \ref{claim:ABCDEF} for $1 \leq k \leq r$.
Then we have
\begin{align*}
u^{\Delta}(K)
= d_G^{\Delta}(K, K') + u^{\Delta}(K'). 
\end{align*}
\end{remark}



\section{Properties of the $\Delta$-Unknotting Number for Montesinos Knots}\label{sec5}



It is known that there exist Montesinos knots for which $u^{\Delta}(K) \ne |a_2(K)|$.
For example,
$9_{25} \, \big( u^{\Delta}(9_{25}) = 2,\, a_2(9_{25}) = 0 \big)$, and 
$9_{44} \, \big( u^{\Delta}(9_{44}) = 2,\, a_2(9_{44}) = 0 \big)$.
The knot $9_{25}$ is alternating, whereas  $9_{44}$ is almost alternating. 

On the other hand, there exist Montesinos knots for which $u^{\Delta}(K) = |a_2(K)|$. For example,
$8_{10} \, \big( u^{\Delta}(8_{10}) = a_2(8_{10}) = 3 \big)$, and
$8_{20} \, \big( u^{\Delta}(8_{20}) = a_2(8_{20}) = 2 \big)$.
Here, $8_{10}$ is alternating, whereas $8_{20}$ is almost alternating.

Thus, the equality $u^{\Delta}(K) = |a_2(K)|$ is independent of whether the knot is alternating or almost alternating.
\begin{figure}[htbp]
    \centering
    \includegraphics[width=0.8\linewidth]{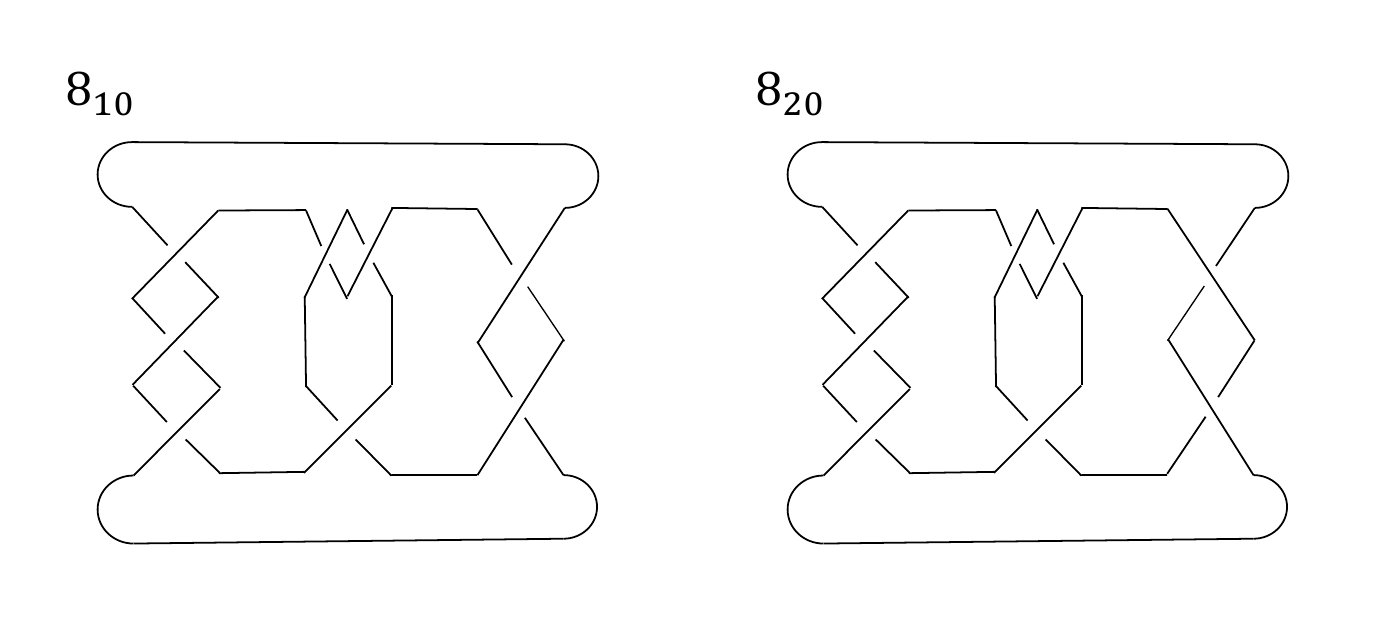}
    \caption{Diagrams of $8_{10}$ and $8_{20}$ sharing the same shadow.}
    \label{fig:810820}
\end{figure}
\begin{table}[htbp]
\centering
\caption{Values of $u^{\Delta}$ and $a_2$ for knots grouped by shadows}
\label{table2}
\begin{tabular}{c|c|c|c|c}
Pair & Knot $M$ & $u^{\Delta}(M)$ & $a_2(M)$ 
& alternating / almost alternating\\
\hline
$(8_{10}, 8_{20})$ 
& $8_{10}$ & $3$ & $3$ & alternating\\
& $8_{20}$ & $2$ & $2$ & almost alternating\\
\hline
$(8_{15}, 8_{21})$ 
& $8_{15}$ & $4$ & $4$ & alternating\\
& $8_{21}$ & $2$ & $0$ & almost alternating\\
\hline
$(9_{22}, 9_{43})$ 
& $9_{22}$ & $1$ & $-1$ & alternating\\
& $9_{43}$ & $3$ & $1$ & almost alternating\\
\hline
$(9_{25}, 9_{44})$ 
& $9_{25}$ & $2$ & $0$ & alternating\\
& $9_{44}$ & $2$ & $0$ & almost alternating\\
\hline
$(9_{30}, 9_{45})$ 
& $9_{30}$ & $1$ & $-1$ & alternating\\
& $9_{45}$ & $2$ & $2$ & almost alternating\\
\hline
$(9_{36}, 9_{42})$ 
& $9_{36}$ & $3$ & $3$ & alternating\\
& $9_{42}$ & $2$ & $-2$ & almost alternating\\
\hline
\end{tabular}
\end{table}

Moreover, the following propositions hold
(see Table \ref{table2}).

\begin{proposition}
Let $M$ and $M'$ be Montesinos knots with 
$c(M)=c(M')$ that admit minimal crossing diagrams sharing the same shadow, where this shadow realizes the minimal crossing number.  
Assume that $M$ is alternating, while $M'$ is almost alternating. Then each of the following three cases is realized.
\begin{enumerate}
 \item \( u^{\Delta}(M) > u^{\Delta}(M') \),
 \item \( u^{\Delta}(M) < u^{\Delta}(M') \),
 \item \( u^{\Delta}(M) = u^{\Delta}(M') \).
\end{enumerate}
Here, $c(M)$ denotes the minimal crossing number of $M$.
\end{proposition}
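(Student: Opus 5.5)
This is an existence statement, so the plan is to exhibit one explicit pair $(M,M')$ for each of the three cases, taken from Table~\ref{table2}. Each pair must satisfy the hypotheses, and the values of $u^{\Delta}$ must be justified.

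\emph{Choice of examples.} For case (1) we use $(M,M')=(8_{10},8_{20})$, where $u^{\Delta}=3>2$; the pair $(8_{15},8_{21})$, with $4>2$, or $(9_{36},9_{42})$, with $3>2$, would also do. For case (2) we use $(9_{22},9_{43})$, where $1<3$; alternatively $(9_{30},9_{45})$, where $1<2$. For case (3) we use $(9_{25},9_{44})$, where $2=2$.

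\emph{Diagrammatic hypotheses.} Each knot listed is a Montesinos knot, which we check from its standard Montesinos/Conway notation. For each pair we then do the following.
\begin{enumerate}
\item Draw a reduced alternating minimal diagram of $M$, as in Figure~\ref{fig:810820} for $8_{10}$.
\item Observe that changing one or more crossings yields an almost alternating diagram of $M'$ on the same shadow, so that $c(M')\le c(M)$.
\end{enumerate}
For equality of crossing numbers we cite the knot tables: $c=8$ for the first pair, $c=9$ for the others. Hence the common shadow realizes the minimal crossing number of both knots. The alternating/almost alternating status of each knot is also read off from the tables.

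\emph{Computing $u^{\Delta}$.} First compute $a_2$ from the Conway polynomials. By Proposition~\ref{prop:a2}, $u^{\Delta}(K)\ge |a_2(K)|$, and $u^{\Delta}(K)\equiv a_2(K)\pmod 2$.
\begin{itemize}
\item When $u^{\Delta}=|a_2|$, which happens for $8_{10}$, $8_{20}$, $9_{22}$, $9_{30}$ and others, it suffices to exhibit $|a_2|$ explicit $\Delta$-moves to the unknot. Where the knot fits one of the patterns of Theorem~\ref{thm:main}, we can instead apply that theorem directly; for instance, $8_{20}$ can be presented with rational tangles of the required types.
\item When $u^{\Delta}>|a_2|$ (for $9_{25}$, $9_{44}$, $9_{43}$), the parity constraint forces $u^{\Delta}\ge |a_2|+2$. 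For example, $a_2=0$ and the knot is nontrivial gives $u^{\Delta}\ge 2$. The matching upper bound then comes from an explicit sequence of two (respectively three) $\Delta$-moves to the unknot.
\end{itemize}
All of these values are also available in the existing literature and tables of $\Delta$-unknotting numbers (Okada, Nakanishi et al.), which we cite.

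\emph{Main obstacle.} The hardest step is the lower bound in the cases where it exceeds $|a_2|$ beyond parity, notably $9_{43}$, where $u^{\Delta}=3$ but $a_2=1$. Parity alone gives only $u^{\Delta}\in\{1,3,\dots\}$, so ruling out $u^{\Delta}=1$ needs a further invariant. The first thing to try is the known obstruction for knots with $u^{\Delta}=1$: such a knot must have a Conway polynomial realizable by a single $\Delta$-move from the trivial knot, and its Alexander module or Blanchfield form must satisfy corresponding constraints. Failing that, we fall back on the published value. To keep case (2) self-contained, we can prefer the pair $(9_{30},9_{45})$. There $u^{\Delta}(9_{30})=1$ is attained by one move, and $u^{\Delta}(9_{45})=2=|a_2|$ follows directly from Proposition~\ref{prop:a2} together with an explicit two-move unknotting, so no extra obstruction is needed.
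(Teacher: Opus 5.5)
Your proposal is essentially the paper's own argument. The paper proves this proposition only by exhibiting the pairs $(8_{10},8_{20})$, $(9_{22},9_{43})$, $(9_{25},9_{44})$ of Table~\ref{table2}, taking the values of $u^{\Delta}$ and the shadow/alternation data from known tables; your alternative $(9_{30},9_{45})$ for case (2) is a sensible refinement, because there both values are forced by Proposition~\ref{prop:a2} plus explicit unknotting sequences, whereas $u^{\Delta}(9_{43})=3$ needs an obstruction beyond $|a_2|$ and parity.

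One aside is inaccurate, though nothing depends on it. $8_{20}=P(3,-3,2)$ is not covered by Theorem~\ref{thm:main}: its two tangles with $\alpha=3$ have slopes $1/3$ and $-1/3$, which differ modulo $1$, and none of the six patterns admits that alongside an $\alpha=2$ tangle.
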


\begin{example}
The three cases in the above proposition are realized by the following examples :
\begin{enumerate}
    \item \( u^{\Delta}(8_{10}) > u^{\Delta}(8_{20}) \),
    \item \( u^{\Delta}(9_{22}) < u^{\Delta}(9_{43}) \),
    \item \( u^{\Delta}(9_{25}) = u^{\Delta}(9_{44}) \).
\end{enumerate}
\end{example}

Thus, the alternation of a knot does not determine the value of $u^{\Delta}(M)$, even among knots admitting minimal crossing diagrams with the same shadow.

\begin{proposition}
Let $M$ and $M'$ be Montesinos knots with 
$c(M)=c(M')$ that admit minimal crossing diagrams sharing the same shadow, where this shadow realizes the minimal crossing number.  
Then each of the following two cases is realized.
\begin{enumerate}
 \item \( u^{\Delta}(M) = |a_2(M)|, \quad
 u^{\Delta}(M') \neq |a_2(M')| \),
 \item \( u^{\Delta}(M) = |a_2(M)|, \quad
 u^{\Delta}(M') = |a_2(M')| \).
\end{enumerate}
\end{proposition}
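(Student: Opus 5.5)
The statement is existential: for each of the two cases I have to exhibit a pair of Montesinos knots $M, M'$ with $c(M)=c(M')$ whose minimal crossing diagrams share a shadow that realizes the minimal crossing number. So the plan is to use the pairs already listed in Table~\ref{table2}, check the shadow condition for each chosen pair, and read off $u^{\Delta}$ and $a_2$.

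For case (2) (both equalities hold) I take the pair $(8_{10}, 8_{20})$ of Figure~\ref{fig:810820}. Both knots have crossing number $8$, and Figure~\ref{fig:810820} shows minimal diagrams of them on a common $8$-crossing shadow. The values $u^{\Delta}(8_{10})=a_2(8_{10})=3$ and $u^{\Delta}(8_{20})=a_2(8_{20})=2$ give $u^{\Delta}=|a_2|$ for both. To make the equalities independent of tables, I would argue as follows.
\begin{itemize}
\item For the lower bound, both knots satisfy $u^{\Delta}\ge |a_2|$ by Proposition~\ref{prop:a2}.
\item For the upper bound on $8_{20}$, note that it is the pretzel knot $P(3,-3,2)$. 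Its $a_2$ follows from the skein relation of Proposition~\ref{prop:lk}. Two $\Delta$-moves untie it, and one can realize them with the crossing-change technique of Claim~\ref{claim:technique2}.
\item For the upper bound on $8_{10}$, I would aim to place it in one of the patterns (1)--(6) of Theorem~\ref{thm:main}. If that fails, I would fall back on the tabulated value.
\end{itemize}

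For case (1) (the first equality holds, the second fails) I take the pair $(8_{15}, 8_{21})$. Here $u^{\Delta}(8_{15})=a_2(8_{15})=4$, while $u^{\Delta}(8_{21})=2\neq 0=|a_2(8_{21})|$. For $8_{21}$, Proposition~\ref{prop:a2} only gives $u^{\Delta}\ge 0$ together with the parity condition. To rule out $u^{\Delta}(8_{21})=0$ it suffices to observe that $8_{21}$ is nontrivial. Parity then forces $u^{\Delta}(8_{21})\ge 2$, and an explicit sequence of two $\Delta$-moves gives equality. The shadow condition is verified the same way as for $(8_{10},8_{20})$: draw the standard minimal $8$-crossing Montesinos diagrams and check that they agree after crossing changes.

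The main obstacle is the geometric claim that each pair shares a minimal-crossing shadow. The invariant values can be quoted from known tables, but the shadow claim needs an explicit figure or diagram comparison, which I would supply as in Figure~\ref{fig:810820}.
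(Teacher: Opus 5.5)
Your proposal is correct and is essentially the paper's own argument. The paper proves the statement simply by exhibiting $(8_{15},8_{21})$ for case (1) and $(8_{10},8_{20})$ for case (2) with the values of Table~\ref{table2}; the shadow check you flag is easy, since up to mirror image $8_{15}$ and $8_{21}$ are the Montesinos knots with tangles $T(2/3),T(2/3),T(\pm 1/2)$ and $8_{10}$, $8_{20}$ those with $T(1/3),T(2/3),T(\pm 1/2)$, each pair differing only in the two crossings of the last tangle, and your observation that $u^{\Delta}(8_{21})\neq 0=|a_2(8_{21})|$ follows from nontriviality alone is a nice self-contained touch. The optional independent checks are unnecessary and would not work as stated: Claim~\ref{claim:technique2} concerns two-bridge knots, whereas $8_{20}=P(3,-3,2)$ is not two-bridge, and $8_{10}$ fits none of the patterns of Theorem~\ref{thm:main}, so those values rest on the tables exactly as in the paper.
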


\begin{example}
The two cases in the above proposition are realized by the following examples, respectively :
\begin{enumerate}
 \item \( M = 8_{15}, \quad M' = 8_{21} \),
 \item \( M = 8_{10}, \quad M' = 8_{20} \).
\end{enumerate}
\end{example}

Thus, the equality $u^{\Delta}(M) = |a_2(M)|$ is not determined by the shadow.


\section*{Acknowledgments}
The author would like to express his sincere gratitude to Professor Makoto Sakuma for his valuable advice and continuous support.

Finally, he would like to thank his family for their unwavering support and understanding throughout this work.





\end{document}